\documentclass[10pt]{article}

\usepackage[latin1]{inputenc}
\usepackage{subcaption}
\usepackage[pagebackref=true, colorlinks=true, linkcolor=blue, citecolor=blue, linkcolor=blue, anchorcolor=red, urlcolor=blue]{hyperref}
\usepackage{url, amsmath,enumerate,fancyhdr,amssymb, amsthm, 
pstricks, epsf, lscape, ifpdf, graphicx, float,tikz}

\usetikzlibrary{decorations.pathreplacing,arrows.meta}
\usetikzlibrary{patterns,decorations.pathreplacing}

\usepackage{placeins}

\usepackage{xcolor}

\usepackage{algorithm}
\usepackage{algpseudocode}
\usepackage{float}        
\usepackage{hyperref}

\makeatletter
\newcommand{\leqnomode}{\tagsleft@true}
\newcommand{\reqnomode}{\tagsleft@false}
\makeatother

\newcounter{substep}

\usepackage{algorithm}
\usepackage{algpseudocode}
\usepackage{float}        
\usepackage{hyperref}

\usepackage{algpseudocode} 
\usepackage{enumitem}
\usepackage{color}
\usepackage{mathtools}
\usepackage{algpseudocode} 
\algnewcommand\algorithmicinput{\textbf{Input:}}
\algnewcommand\INPUT{\item[\algorithmicinput]}
\algnewcommand\algorithmicinitialization{\textbf{Initialization:}}
\algnewcommand\INITIALIZATION{\item[\algorithmicinitialization]}

\usepackage{xcolor}
\usepackage{tcolorbox}

\newcommand{\pha}{\phantom} 
\newcommand{\ti}{\times}

\newcommand{\mymatrix}[1]{{\cal R}^{#1}}

\newcommand{\spread}{\operatorname{spread}}

\newcommand{\myvec}{\operatorname{vec}}
\newcommand{\svec}{\operatorname{svec}}
\newcommand{\Diag}{\operatorname{Diag}}

	\newcommand\bovermat[2]{%
		\makebox[0pt][l]{$\smash{\overbrace{\phantom{%
						\begin{matrix}#2\end{matrix}}}^{\text{#1}}}$}#2}
	
	\usepackage{mathtools}
	\makeatletter
	\renewcommand*\env@matrix[1][*\c@MaxMatrixCols c]{%
		\hskip -\arraycolsep
		\let\@ifnextchar\new@ifnextchar
		\array{#1}}
	\makeatother
	
	\usepackage{tikz,array,calc, xcolor}

				\newcommand{\circledentry}[1]{%
					\tikz[baseline=(char.base)]{
						\node[
						draw,
						circle,
						minimum size=4.5mm,
						inner sep=0pt
						] (char) {$\displaystyle #1$};
					}
				}

\usepackage[margin=3.2cm]{geometry}

\mathchardef\mhyphen="2D

\newtheorem{Definition}{Definition}

\newtheorem{Example}{Example}
\newtheorem{Proposition}{Proposition}
\newtheorem{Lemma}{Lemma}
\newtheorem{Theorem}{Theorem}
\newtheorem{Corollary}{Corollary}
\newtheorem{Remark}{Remark}
\newtheorem{Assumption}{Assumption}

\newcommand{\ba}{\begin{array}}  
	\newcommand{\ena}{\end{array}}

\newcommand{\A}{\mathcal A}

\newcommand{\lin}{\operatorname{lin}}

\newcommand{\bpx}{\begin{pmatrix}}
\newcommand{\epx}{\end{pmatrix}}

\newcommand{\bbx}{\begin{bmatrix}}
\newcommand{\ebx}{\end{bmatrix}}

\newcommand{\bdef}{\begin{Definition}} 
\newcommand{\commentout}[1]{}
\newcommand{\co}[1]{}

\newcommand{\coab}[1]{}

\newcommand{\nin}{\noindent}

\newcommand{\pf}[1]{\vspace{.35cm} \nin {\bf Proof {#1} }}

\newcommand{\norm}[1]{\parallel \! #1 \! \parallel}
\newcommand{\sym}[1]{{\cal S}^{#1}}
\newcommand{\psd}[1]{{\cal S}_+^{#1}}

\newcommand{\rad}[1]{\mathbb{R}^{#1}}

\newcommand{\symn}{\sym{n}}
\newcommand{\psdn}{\psd{n}}
\newcommand{\radn}{\rad{n}}

\newcommand{\la}{\langle}
\newcommand{\ra}{\rangle}

\newcommand{\bX}{\bar{X}}
\newcommand{\bZ}{\bar{Z}}
\newcommand{\DX}{\Delta {X}}
\newcommand{\DZ}{\Delta {Z}}

\newcommand{\rank}{\operatorname{rank}}

\newcommand{\beq}{\begin{equation}}
\newcommand{\eeq}{\end{equation}}
\newcommand{\beqa}{\begin{eqnarray}}
\newcommand{\eeqa}{\end{eqnarray}}
\newcommand{\bac}{\begin{array}{ccccccccccc}}
\newcommand{\eac}{\end{array}}
\newcommand{\bprop}{\begin{Proposition}}
\newcommand{\eprop}{\end{Proposition}}

\newcommand{\beqast}{\begin{eqnarray*}}
\newcommand{\eeqast}{\end{eqnarray*}}
\newcommand{\benum}{\begin{enumerate}}
\newcommand{\eenum}{\end{enumerate}}
\newcommand{\bit}{\begin{itemize}}
\newcommand{\eit}{\end{itemize}}
\newcommand{\bth}{\begin{Theorem}}
\newcommand{\enth}{\end{Theorem}}
\newcommand{\ble}{\begin{Lemma}}
\newcommand{\ele}{\end{Lemma}}
\newcommand{\bex}{\begin{Example}}
\newcommand{\eex}{\end{Example}}
\newcommand{\bcor}{\begin{Corollary}}
\newcommand{\ecor}{\end{Corollary}}
\newcommand{\brem}{\begin{Remark}}
\newcommand{\erem}{\end{Remark}}
\newcommand{\bass}{\begin{Assumption}}
\newcommand{\eass}{\end{Assumption}}

\newcommand{\bA}{\bar{\A}}
\newcommand{\bb}{\bar{b}}

\newcommand{\bsmx}{\begin{small} \begin{pmatrix}}
\newcommand{\esmx}{\end{pmatrix} \end{small}}

\title{\Large Strict complementarity in semidefinite programming, singularity degree,
	 and the (dis)connection of forward and backward errors}  
\author{G\'{a}bor  Pataki \thanks{Department of Statistics and Operations Research, University of North Carolina at Chapel Hill} \hspace{1cm} 
}

\begin{document}

\maketitle 

\begin{abstract}

Strict complementarity of a primal-dual pair of optimal solutions
is fundamental in the numerical analysis of semidefinite programs (SDPs). 
Strict complementarity  drives the convergence behavior of interior point algorithms.
When it fails,  some pathological examples show a striking gap between two error measures of approximate solutions. The first of these is  the forward or  "true" error, i.e.,  the distance to the optimal solution set. The second is the less informative  (but still widely used) backward error, measured by the  constraint violation.

We first characterize  the lack of strict complementarity in SDPs via a simple normal form. Our normal form has three key features: (i) it is obtained using  elementary row operations and rotations; (ii) it makes the lack of strict complementarity evident; and (iii) it lets us construct any such SDP by a simple algorithm. A variant of our generating algorithm allows us to construct any SDP that fails strict complementarity but satisfies  Slater's condition on both the primal and dual sides.
 Thus,  we {\em parametrize} the data of all SDPs that lack strict complementarity in a manner similar to how  the Jordan normal form  parametrizes square matrices with given eigenvalue structure.

We next study the singularity degree of SDPs, the minimum number of facial reduction steps needed to induce  Slater's condition. We precisely characterize when the singularity degree of an SDP equals the number of constraints -- a result that underlies our generating algorithms and that we believe is  of independent interest.

We construct and publicly share a set of SDPs that lack strict complementarity and present a detailed computational study. We find  that forward--backward error gaps are quite common: 
in many small SDPs (with matrix order $\leq  20$), the forward ("true") error exceeds the backward error by up to seven  orders of magnitude.
Further, in several data sets the forward and backward errors are {\em inversely} correlated. In other words, the worse the "true" forward error is, the harder it is to detect.

	\end{abstract}
	
	\paragraph{\em Key words} Semidefinite programming, strict complementarity, forward error, backward error, facial reduction, singularity degree
	
	\vspace{-.5cm}
	\paragraph{\em MSC 2010 subject classification} Primary: 90C22, 90C46 	Secondary: 49N15, 15A21

	\section{Introduction} 
	
Semidefinite programs (SDPs) are convex optimization problems in which the decision variable is a positive semidefinite matrix,
and  the objective function and constraints are linear. 
SDPs substantially generalize linear programming and they 
have found uses in many areas 
of optimization, such as control theory,  engineering, and combinatorial optimization. Besides 
having great modeling power, they can be solved efficiently by modern 
interior-point and first-order optimization algorithms  \cite{NestNemirov:94, nesterov2018lectures}.

	This work  focuses on a property of SDPs that sharply sets them apart from linear programming. We consider a primal-dual pair of  SDPs 
\begin{center}
	\begin{minipage}{0.5\linewidth}
		\leqnomode
		\begin{equation}\label{p}
			\begin{split}
				\inf  & \,\, \la C,  X  \ra  \\
				s.t. & \,\, \la A_i, X \ra \, = \, b_i \, (i=1, \dots, m) \\
				& \,\, X \succeq 0
			\end{split}\tag{P}
		\end{equation}
	\end{minipage}%
	\begin{minipage}{0.5\linewidth}
		\reqnomode
		\begin{equation}\label{d}
			\begin{split}
				\sup & \,\, \la b, y \ra  \\
				s.t.   & \,\,  \sum_{i=1}^m  y_i A_i \preceq C \\
			\end{split}\tag{D} 
		\end{equation}
	\end{minipage}
\end{center}
where  the $A_i$ and $C$ are $n \times n$ 
symmetric matrices and  the $b_i$ are scalars. 
For  symmetric matrices $S$ and $T$  we write 
$S \preceq T$ to say that $T - S$ is positive semidefinite (psd) 
and we write $\la T, S \ra :=  {\rm trace}(TS)$ to denote their inner product. 
We also write $\la b, y \ra$ for the inner product of $b := (b_1, \dots, b_m)^\top$ and $y.$ 
For simplicity, we call Z feasible (optimal) in \eqref{d} if there is $y \in \rad{m}$ for which $(y,Z)$ is feasible (optimal).

Suppose $X$ is a feasible solution in \eqref{p} and $Z$ in \eqref{d}.
Since 
$$ 
\la C, X \ra - \la b, y \ra = \la X, Z \ra \geq 0,
$$ 
it follows that 
$X$ and $Z$ are both optimal iff $\la X, Z \ra = 0.$ Then 
the range spaces of $X$ and $Z$ are orthogonal, hence 
\begin{equation} \label{eqn-rankX-rankZ} 
	\rank X + \rank Z \leq n.
\end{equation}
We say that a pair of optimal solutions $(X,Z)$ \emph{satisfies strict complementarity} if 
in \eqref{eqn-rankX-rankZ} equality holds; and that it \emph{fails strict complementarity} 
if strict inequality holds. 
We also say that the pair \eqref{p}$\mhyphen$\eqref{d} satisfies strict complementarity, if
a pair of maximum rank optimal solutions do; and that they fail strict complementarity, if any pair of maximum rank optimal solutions does.

 Strict complementarity plays several fundamental roles in semidefinite
 programming. The first of its roles we highlight is  in the analysis of Newton's method and 
 interior-point methods (ipms).  
 The influential work \cite{alizadeh1998primal} showed that, under strict complementarity and nondegeneracy assumptions,  the Newton system associated with their scaling   is well defined and has a nonsingular Jacobian at the solution.  
   Strict complementarity has also become a standard assumption in proving superlinear convergence of ipms.
    The first works that  proved  superlinear convergence of ipms under this assumption were 
    \cite{kojima1998local,  potra1998superlinearly, luo1998superlinear, kojima1999predictor, lu2005note}. Only very recently did 
    \cite{mohammadisiahroudi2025quantum}
  prove quadratic convergence of an iterative refinement method without
  any regularity assumptions on the underlying SDP.

  Strict complementarity also impacts  the connection of two fundamental error measures in SDPs.
  We first explain these errors for a {\em feasibility} problem.
  The first is the "true error", i.e., the forward error in numerical analysis parlance, 
     defined as the distance to the set of feasible solutions. The second 
  is the backward error, which is just  the constraint violation. 
   
  While the forward error is considerably more useful, it is usually impossible to compute. Indeed, 
  computing it means finding an {\em exact} solution which is  closest to a given  {\em approximate} solution; but we computed that  approximate 
  solution, since we could not find an exact one. 
 Thus, instead of the forward error, we often use the backward error as a proxy, and seek bounds that relate the two. 

  As the classic  paper \cite{Hoffman1952ApproximateSolutions} showed, the two errors are of the same order in linear programs.
  A highly influential work \cite{lewis1998error} showed  the same for convex inequality systems which satisfy Slater's condition and another technical condition called well-posedness -- see their Corollary 1. This result directly applies to semidefinite systems.
  
 However, the two errors can be drastically different in SDPs when Slater's condition fails.
   As to how different, in seminal work, Sturm~\cite{Sturm:00} proved the error bound
  \begin{equation}        \label{eqn-sturm-bound} 
  	\text{forward error}
  	= O \bigl( \text{(backward error)}^{1/2^d} \bigr),
  \end{equation} 
  where $d \in \{0,\dots,n-1\}$ is the {\em singularity degree} of the SDP.
  The singularity degree is the minimum number of facial reduction  \cite{BorWolk:81,Pataki:13,drusvyatskiy2017many} 
  steps needed  to reach the minimal face containing the feasible region
  \cite{BorWolk:81,Pataki:13,drusvyatskiy2017many}. Thus, a feasible SDP 
  satisfies Slater's condition exactly when its singularity degree is zero.
    Sturm also showed that the \eqref{eqn-sturm-bound} bound  is best possible: see our discussion in Example \ref{example-sturm}.
    
    The singularity degree has since become 
    an important measure of irregularity of SDPs.
  It appears, for example, in the analysis of error bounds and slow
  convergence~\cite{sremac2021error} and  in convergence results for
  alternating projections~\cite{drusvyatskiy2017note}.

  This theory of error bounds also applies to  the {\em optimal set} of an SDP.
  To see how, we let $Z^*$ be an optimal solution of \eqref{d}.
   We may assume that $Z^*$ has a positive definite block of order, say $s$ in the lower right corner, and all
  its other elements are zero.
  We then represent the optimal set of \eqref{p} by attaching the equation 
  \begin{equation} \label{eqn-Z*X=0} 
  	\la Z^*, X \ra = 0
  \end{equation}
  to its  feasible set.
   Then every feasible
  matrix $X$ of the enlarged system has its last $s$ rows and columns equal to
  zero. Deleting these rows and columns from the $A_i$ and dropping
  \eqref{eqn-Z*X=0} yields a reduced system. In turn, this reduced system satisfies
  Slater's condition, equivalently, it  has singularity degree zero, exactly when
  there exists an optimal solution of \eqref{p} that is strictly complementary
  with $Z^*$.

 The following illuminating example is adapted from 
 Example 2 in \cite{Sturm:00}. It will be our running example.
 
\begin{Example} \label{example-sturm} 
	Consider the SDP
	\begin{equation} \label{problem-sturm} 
		\begin{array}{rrcl} 
			\inf & x_{11} \\
			s.t. & x_{14} & = & x_{22}  \\
			     & x_{24} & = &  x_{33} \\
			     & X & \text{is}  & 4 \times 4, \, \text{psd}. 
			     \end{array} 
	\end{equation}
By positive semidefinitess the optimal value of 
\eqref{problem-sturm}  is nonnegative, 
and there is an optimal solution with value 
$0$ (e.g., $X=0$).

 If $X$ is any optimal solution, then by 
 $x_{11}=0$ the first row and column of $X$ is zero, hence 
$x_{14}=0, $ hence $x_{22}=0, \,$ and so on. 
Thus the only  element of $X$ which may be nonzero is $x_{44}.$ 
 We can also check that in a dual optimal solution 
both dual variables $y_1$ and $y_2$ are zero.

Thus $X^*$ and $Z^*$ below is a unique (up to the choice of a nonnegative $x_{44}$) 
pair of primal-dual optimal solutions:
\begin{equation} \label{eqn-Xstar-Zstar-Sturm} 
X^*  = \begin{pmatrix}
	0 & 0 & 0 & 0\\
	0 & 0 & 0 & 0\\
	0 & 0 & 0 & 0\\
	0 & 0 & 0 & x_{44}
\end{pmatrix}, \,  Z^* = \begin{pmatrix}
1 & 0 & 0 & 0\\
0 & 0 & 0 & 0\\
0 & 0 & 0 & 0\\
0 & 0 & 0 & 0
\end{pmatrix},
\end{equation}
This pair fails strict complementarity. 

However, for any $\epsilon > 0$ and $x_{44} \geq 3$ 
 the following $X_\epsilon$ is an approximate optimal solution:
 \footnote{To show  $X_\epsilon \succeq 0,$ we
 	subtract $\epsilon^{-1/2}$ times the first column from the last, and do the analogous operation on rows. This replaces $x_{44}$ by $x_{44}-1. $ 
 	In a similar manner we proceed to eliminate all offdiagonal elements.} 
\begin{equation} \label{eqn-Xepsilon} 
	X_\epsilon = \bpx \epsilon & 0 & 0 & \epsilon^{1/2} \\
                    0 & \epsilon^{1/2} & 0 & \epsilon^{1/4} \\
                    0 & 0 & \epsilon^{1/4} & \epsilon^{1/8}  \\
                    \epsilon^{1/2} & \epsilon^{1/4} & \epsilon^{1/8} & x_{44} \epx.
\end{equation}
 We see that $X_\epsilon$ satisfies all constraints. However, its distance to the set of optimal solutions 
is at least $\epsilon^{1/8}, \,$ which is much  larger than $\epsilon.$ Say,
 suppose  $\epsilon=10^{-6}, \,$ so $X_\epsilon$ is declared optimal by most solvers; then $\epsilon^{1/8}=0.1778, $ which is five orders of magnitude larger.

A modification of Example 2 in \cite{sturm1999using} is stated over $n \times n$ matrices: 
it shows an approximate solution with $\epsilon$ constraint violation, whose 
distance to the optimal solution set is $\epsilon^{1/2^{n-1}}.$ 
We will study this example in detail  in Subsection   
\ref{subsection-computational-results-Sturm}. 

\end{Example} 

We  focus on the following questions: 
\begin{enumerate}[label=(Q\arabic*)]
	\item \label{question1} How common are these pathological behaviors -- i.e., huge discrepancies between forward and backward error ?
	In fact, can we add more equations to the SDP \eqref{problem-sturm}, or to its $n \times n$ variant, while preserving both the lack of strict complementarity and its pathological behavior? 
	
	\item \label{question2}  Suppose both \eqref{p} and \eqref{d} satisfy Slater's 
	condition, i.e. there is a positive definite feasible $X$ in \eqref{p} and 
	a positive  definite feasible $Z$ in \eqref{d}. (Note that \eqref{problem-sturm} satisfies 
	Slater's condition, but its dual does not.) 	
	Does this resolve the discrepancy 
	between backward and forward errors? 
	
\item \label{question3} Less immediately, we also ask: 
is there a compact representation of {\em all} SDPs that lack strict complementarity?

\end{enumerate}

To sketch our contributions, we revisit Example \ref{example-sturm}:
\begin{Example} \label{example-sturm-2} (Example \ref{example-sturm} continued) 
	We write \eqref{problem-sturm} as 
	an SDP with 
	\begin{equation} \label{problem-sturm-2} 
		\begin{aligned}
			A_1 &= \begin{pmatrix}
				0 & 0 & 0 & -1 \\
				0 & 2 & 0 & 0 \\
				0 & 0 & 0 & 0 \\
				-1 & 0 & 0 & 0
			\end{pmatrix},
			\quad
			A_2 = \begin{pmatrix}
				0 & 0 & 0 & 0 \\
				0 & 0 & 0 & -1 \\
				0 & 0 & 2 & 0 \\
				0 & -1 & 0 & 0
			\end{pmatrix},
			\quad
			b = (0,0)^\top, \\[1ex]
			C &= \begin{pmatrix}
				1 & 0 & 0 & 0 \\
				0 & 0 & 0 & 0 \\
				0 & 0 & 0 & 0 \\
				0 & 0 & 0 & 0
			\end{pmatrix},
		\end{aligned}
	\end{equation}
	and recall $X^*$ and $Z^*$ from \eqref{eqn-Xstar-Zstar-Sturm}.
	As we discussed before, the equations 
	\begin{equation} \label{eqn-certify-Xstar}
		\la Z^*, X \ra = \la A_1, X \ra = \la A_2, X \ra = 0
	\end{equation}
	certify that $X^*$ is a maximum rank optimal solution in the primal.

We also want to certify that $Z^*$ is a maximum  rank optimal solution in the dual. For that, let  
	\begin{equation} \label{eqn-Y1Y2-sturm} 
		\begin{aligned}
			Y_1 &= \begin{pmatrix}
				0 & 0 & 0 & 0  \\
				0 & 0 & 0 & 1 \\
				0 & 0 & 1 & 0 \\
				0 & 1 & 0 & 0
			\end{pmatrix},
			\quad
			Y_2 = \begin{pmatrix}
				0 & 0 & 0 & 1 \\
				0 & 1 & 0 & 0 \\
				0 & 0 & 0 & 0 \\
				1 & 0 & 0 & 0
			\end{pmatrix}.
		\end{aligned} 
	\end{equation}
	Then $Y_1$ and $Y_2$ have zero inner product with $A_1, A_2, \,$ and with $C$ so they also have zero inner product with 
	any solution feasible in the dual.
	We claim that the equations 
		\begin{equation} \label{eqn-certify-Zstar}
		\la X^*, Z \ra = \la Y_1, Z \ra = \la Y_2, Z \ra = 0
	\end{equation}
		certify that $Z^*$ is a maximum rank optimal solution in the dual. Indeed, suppose $Z$ is optimal in the dual. 
		Then $\la X^*, Z \ra = 0$ implies that 
		the last row and column of $Z$ is zero; and the other equations imply that the $3$rd and $2$nd rows and columns are zero as well.
		
\end{Example} 
Can we certify the maximum rank solutions in {\em any} SDP pair that lacks strict complementarity, in such a simple manner?
We certainly cannot: the certificates even in Examples \ref{example-sturm}--\ref{example-sturm-2} may disappear after some elementary transformation.
For example, they are typically ruined if we perform elementary row operations on the equations, and/or a similarity transformation on all the $A_i$ and on 
$C.$  

However, we will show that the same transformations  "clean up" any SDP that lacks strict complementarity. 
To explain the analogy, we recall a classical result from linear algebra:
the linear system of equations $Ax=b$ with $m$ constraints 
is infeasible iff 
we can  bring it to the normal form 
\begin{equation} \label{problem-Aprimex=bprime} 
	\begin{array}{rcl}
	A^\prime x  & = & b^\prime  \\
	0^\top x & = & 1,
	\end{array} 
\end{equation}
where $A^\prime$ is some matrix with $m-1$ rows and $b^\prime \in \rad{m-1}.$ 
This normal form has three important features.
First, it is constructed in a very simple fashion, by elementary row operations.
Second, it makes infeasibility of 
$Ax=b$ easy to verify. Third, it allows us to systematically generate any infeasible 
linear system of equations: for that, all we must do is write down a system like 
\eqref{problem-Aprimex=bprime}, then "mess it up" using elementary row operations.
This common sense algorithm  always succeeds and it produces any 
 infeasible linear system.
 
 For most of the rest of the paper, we fix a primal feasible solution and a dual feasible solution 
 $X^*$ and $Z^*:$ 
  \begin{equation} \label{eqn-Xstar-Zstar} 
 	X^* = \bpx 0  & 0 & 0 \\
 	0 & 0 & 0 \\
 	0 & 0 & \Lambda  \epx, \, Z^* = \bpx \Gamma  & 0 & 0 \\
 	0 & 0 & 0 \\
 	0 & 0 & 0 \epx.
 \end{equation}	
 Here $\Lambda$ and $\Gamma$ are diagonal  positive definite, of order $r$ and $s, \,$ 
 respectively, and $r + s < n.$ 
 Since $X^*$ and $Z^*$ have zero inner product, they are both optimal.
 
 Our first main contribution, Theorem \ref{thm-main} will characterize when $X^*$ and $Z^*$ are both maximum rank optimal solutions.
  We sketch  it in the

 {\bf Informal Theorem \ref{thm-main}}  Assume that $X^*$ and $Z^*$ given in 
 \eqref{eqn-Xstar-Zstar} is a pair of optimal solutions 
 in \eqref{p}$\mhyphen$\eqref{d}. Then 
 they are both maximum rank optimal solutions 
 $\Leftrightarrow$ there  are elementary row operations and similarity transformations performed on all the $A_i$ and $C,$
  after which 
 \begin{itemize}
 	\item  $X^*$ and $Z^*$ are still a pair of optimal solutions in the same form given in \eqref{eqn-Xstar-Zstar}; and 
     \item \eqref{p} is transformed into a form  with constraint matrices denoted by $A_i^\prime$ and objective denoted by $C^\prime$ such that 
 \begin{itemize}
 	\item the equations
 		\begin{equation} \label{eqn-certify-Xstar-informal}
 		\la Z^*, X \ra = \la A_1^\prime, X \ra = \dots = \la A_k^\prime, X \ra = 0
 	\end{equation}
 	certify that $X^*$ is a maximum rank primal optimal solution.
 	\item the equations
 		\begin{equation} \label{eqn-certify-Zstar-informal}
 		\la X^*, Z \ra = \la Y_1, Z \ra = \dots = \la Y_\ell, Z \ra = 0
 	\end{equation}
 		certify that $Z^*$ is a maximum rank dual optimal solution.
\end{itemize}
\end{itemize}
 Here $k$ and $\ell$ are positive integers and $Y_1, \dots, Y_\ell \in \symn$ are suitable matrices.
 
 The equations \eqref{eqn-certify-Xstar-informal}  "certify" the maximum rank by 
 an elementary linear algebraic argument:  any $X$ that satisfies them must have its first $n-r$ rows and columns equal to zero.
 The equations \eqref{eqn-certify-Zstar} similarly certify that any $Z$ that satisfies them must have its last $n-s$ rows and columns equal to zero. 
  \co{
 
 Before we get to the precise statement in Subsection \ref{subsection-thenormalform}, we explain this  by Figure \ref{fig:sparsity-interval}. This figure depicts depicts the sparsity structure of $X^*, Z^*, \,$ the  $A_i, \, $ and the $Y_j$ in an SDP which is in our normal form. Here  $n=10, m=5, \, $ and $k = \ell=3.$  (Also, in the figure we do not use the $()^\prime$ notation ).  
 
  The following elements in these matrices in the upper row 
  are positive: the $(1,1)$ and $(2,2)$ elements of $Z^*;$ the $(3,3)$ and $(4,4)$ elements of $A_1; $ the 
  $(5,5)$ and $(6,6)$ elements of $A_2; $ and the $(7,7)$ and $(8,8)$ elements of $A_3.$ Thus, $\la Z^*, X \ra = 0$ implies that the diagonal elements in the upper left corner of $X$ are zero, hence the first two rows and columns of $X$ are zero. Continuing, the equations \eqref{eqn-certify-Xstar-informal} indeed certify that in any $X$ that satisfies those equations the first $8$ rows and columns are zero. 
 
 \begin{figure}[h]
 	\centering
 	
 	\includegraphics[width=\textwidth]{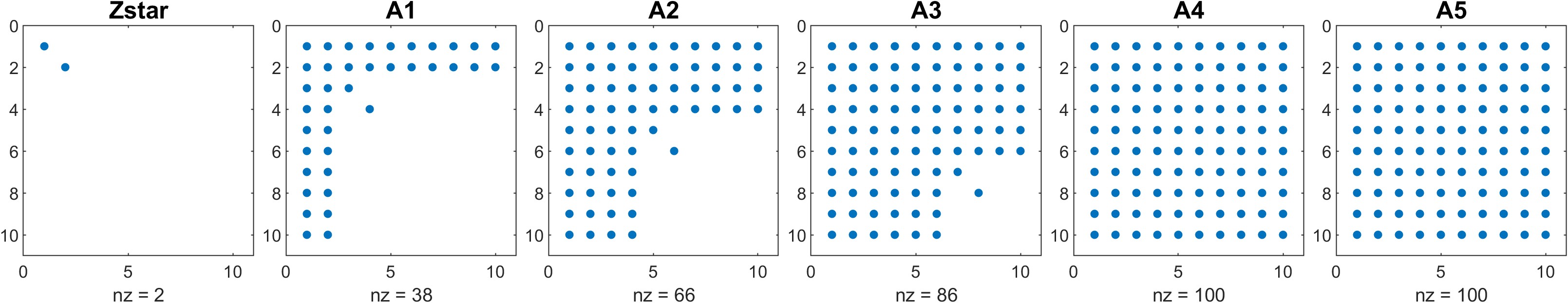}
 	
 	\vspace{1cm}
 	
 	\includegraphics[width=0.75\textwidth]{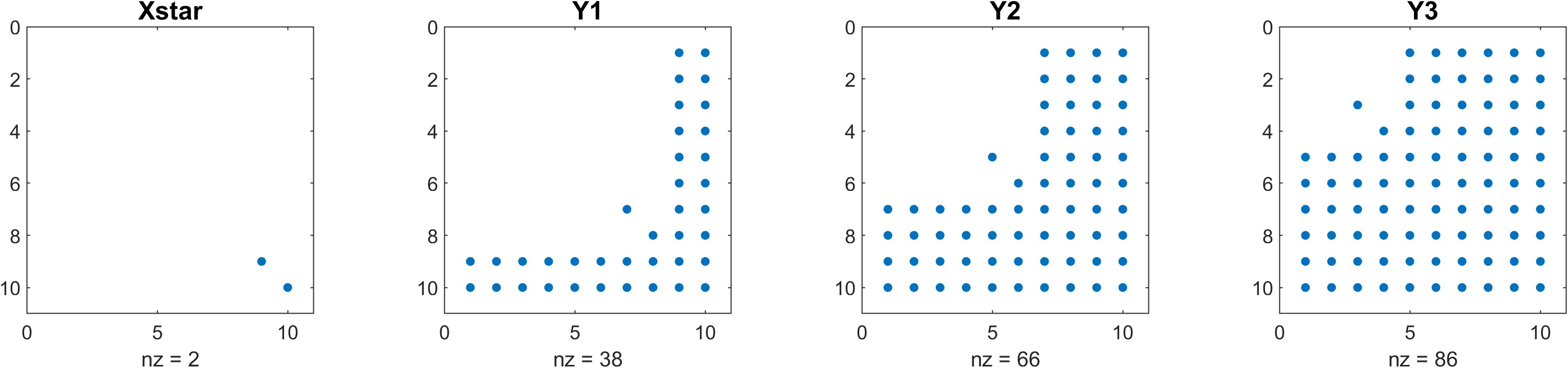}
 	
 	\caption{Sparsity structure of $A_i$ and $Y_j$ in an SDP which is in the normal form of Theorem \ref{thm-main}} 
 \label{fig:sparsity-interval}
 \end{figure}
 
}
 Thus, Theorem \ref{thm-main} provides a  normal form of SDPs that lack strict complementarity.
Further, 
\begin{enumerate}

	\item \label{item2} We build on this normal form and present 
	a simple algorithm that always succeeds
	and whose outputs include 
	every SDP in the form of \eqref{p} which lacks
	 strict complementarity with its dual.

	\item \label{item3} We then present a variant of our algorithm to produce any SDP pair that lacks strict complementarity, but in which  both primal and dual satisfy Slater's condition.
	
	\item  Next we precisely characterize when \eqref{p} has singularity degree 
	equal to the number of constraints. This result plays an important role 
	in our generating algorithms; further, given the importance of the singularity degree, we believe it is of independent interest.

		\item Lastly, we generate and share a  challenging 
	set of SDP test problems and present a computational study. 
	We first answer questions \ref{question1} and \ref{question2} above. In particular, 
	our algorithms generate many SDPs which are not as structured as \eqref{problem-sturm} and have moderate sizes ($n=10, \,$ or $n=20$); still the forward error is seven orders of magnitude larger than the constraint violation.
	
	Besides, our algorithms  
	also produced  SDPs, which are challenging in the traditional sense: 
	in other words, leading solvers struggle to find near optimal solutions with small constraint violation. 
\end{enumerate}

Theorem \ref{thm-main} and items \eqref{item2} and \eqref{item3} answer question \ref{question3} in the affirmative. 
Put slightly differently, they mean that 
we \emph{parametrize} the data  of all SDPs that lack strict complementarity by the normal form and by elementary row operations and similarity 
transformations. Thus, our results put SDPs that lack strict complementarity into a category similar to infeasible systems of equations.

To be fair, the analogy with the normal form \eqref{problem-Aprimex=bprime} is not perfect. Indeed,  to construct our normal form of SDPs that lack strict complementarity, we would need to find an exact  solution to an alternative system 
\eqref{eqn-A*y-alternative}. 
A  more perfect analogy is with square matrices with given eigenvalues;  
these are parametrized by the Jordan normal form and their eigenvectors. 
Normal forms and parametrizations in a similar spirit 
were given for "well behaved" \cite{pataki2019characterizing} semidefinite systems; infeasible \cite{LiuPataki:15}, 
and weakly infeasible semidefinite systems \cite{pataki2022echelon}.

Our reformulation process is a simplified version of {\em facial reduction}, a technique 
that iteratively constructs the smallest face of the semidefinite cone  that contains the feasible set. 
Facial reduction originated in \cite{BorWolk:81}, and a simplified version was given e.g., in 
\cite{Pataki:13}. For a survey that also covers its applications in SDPs that arise in combinatorial optimization we refer to 
\cite{drusvyatskiy2017many}.

 Our paper relies  on minimal technical machinery: the only result from optimization theory we use is the theorem of the alternative   in Proposition \ref{prop-fr}.
The rest of our constructions rely on  elementary linear algebra only.

\paragraph{Organization of the paper, and reader's guide}
Subsection \ref{subsection-literature-review} reviews related literature.
Subsection \ref{subsection-notation-and-preliminaries} introduces notation that is used throughout  the paper. 
In Subsection \ref{subsection-thenormalform} we state 
Theorem \ref{thm-main} about our normal form,
 and to build intuition we prove the "easy" direction.
 In Section \ref{section-generate-nonstrict-SDP} we present our generating algorithms.
  Section \ref{section-the-singularity-degree}  presents our results on the singularity degree.
  Section \ref{section-computational-results} gives our computational 
 results. For ease of exposition we present the proof of Theorem \ref{thm-main} 
 in Section \ref{section-proof-of-theorem-1}.

\subsection{Related literature} 
\label{subsection-literature-review} 

The first papers that proved local quadratic convergence of an ipm under strict complementarity and nondegeneracy assumptions 
were \cite{kojima1998local} and \cite{potra1998superlinear}. The need for nondegeneracy was eliminated in \cite{luo1998superlinear} and 
\cite{kojima1999predictor}; and the proof in the latter was completed in \cite{lu2005note}. 

Since the work of Sturm \cite{Sturm:00}, the singularity degree of semidefinite programs has been studied extensively.
For example, \cite{drusvyatskiy2017note} showed that the convergence rate of the alternating projection method for semidefinite feasibility problems is precisely governed  by the singularity degree.
In \cite{sremac2021error} the authors proved that for a certain class of
central paths, the eigenvalues of the primal and dual optimal solutions 
vanish at a fast rate precisely when the singularity degree is at most one. 

Instance generators for SDPs that fail strict complementarity were described in \cite{wei2010generating} and \cite{mohammadisiahroudi2024generating}. These generators have been useful for computational testing, e.g. in \cite{sremac2021error}. However  -- in contrast to our generators given in Section \ref{section-generate-nonstrict-SDP} -- they do not capture all SDPs that fail strict complementarity.

Error bounds in mathematical optimization have a rich literature and here we can mention only a few important references.
The work \cite{lewis1998error} extended  Hoffman's  error bound philosophy from linear inequality systems to  convex inequality systems. 
This paper also showed that Slater's condition is necessary for a global error bound to hold.
Recent work \cite{pena2019new} generalized Hoffman's constant to the case when the system of inequalities includes 
a {\em reference polyhedron;} \cite{pena2019new}  also designed algorithms to compute the Hoffman constant.

 The error bounds furnished in these works are {\em Lipschitzian, } i.e., the relation between 
the two errors is linear. In contrast, the celebrated Sturm error bound \eqref{eqn-sturm-bound} 
is {\em H\"{o}lderian, } i.e, the forward error is bounded by a  power of the backward error. 
A more general error bound, for mixed semidefinite-second order conic systems  of this type was 
given in \cite{luo2000error}. We mention an error bound for conic linear programs over amenable cones given in 
\cite{lourencco2017amenable}, a class of cones which include the semidefinite and second order cones.

\subsection{Notation and preliminaries} 
\label{subsection-notation-and-preliminaries} 

All vectors in the paper are column vectors.

We denote by $\symn$ the set of $n \times n$ symmetric matrices, and by 
$\psdn$ the set of $n \times n$ symmetric psd matrices.
We denote by $[n]$ the set $\{1,\dots,n\}$.
To write \eqref{p}\text{-}\eqref{d} in a more compact form, 
we define the linear operator $\A$ and its adjoint as 
$$
\A X = (\la A_1, X \ra, \dots, \la A_m, X \ra)^\top, \, \A^* y = \sum_{i=1}^m  y_i A_i.  
$$
We assume throughout that all the $A_i$ are linearly independent.

For a matrix $M \in \rad{m \times n}$ we define the column vector 
$\myvec(M) \in \rad{mn}$ obtained by stacking the columns of $M.$ 
For 
$T \in \symn$ we define the vector $\svec(T) \in \rad{n(n+1)/2}$ as follows:
	first we multiply the offdiagonal elements of $T$ by $\sqrt{2}, \, $ then 
	we stack the upper triangular part of $T$ into $\svec(T).$ Thus, for $S, T \in \symn$ we have 
	$$
	\la S, T \ra = \la \svec(S), \svec(T) \ra. 
	$$
	
Given a matrix $M \in \mathbb{R}^{n \times n}$ and $R, S  \subseteq [n]$ we denote the submatrix of $M$ corresponding to rows in $R$ and columns in $S$ by $M(R,S).$ When $R = \{r\}$ is a singleton, we simply write $M(r,S)$ for $M(\{r\}, S).$  For brevity, we let $M(R):=M(R,R).$

The two main players in the paper are {\em reformulations} and {\em regular facial reduction sequences}, which we now define.

\begin{Definition} \label{definition-reformulation} 
	We say that we 
	\begin{enumerate}
		\item {\em rotate} a set of matrices say $M_1, \dots, M_t, \,$ by an invertible  matrix
		 $T \,$  if we replace 
		$M_i$ by $T^\top M_i T$ for all $i; $ and that we \emph{rotate}  \eqref{p} by an invertible matrix $T \,$ if we rotate all $A_i$ and 
		$C$ by $T.$ 
		
		Equivalently, we  say that we  apply the rotation 
		$T^\top( \cdot)T$ to these matrices, or to \eqref{p}. \footnote{We do not assume 
		$T$ to have orthonormal columns, so strictly speaking the terminology 
		"similarity transformation" would be more precise. We call these transformations rotation for simplicity.} 
		
		\item 
		{\em reformulate}  \eqref{p}  if we apply the following operations (in any order):
		\begin{enumerate}
			\item  We rotate \eqref{p} by an invertible  matrix. 
			\item \label{exch}  For some $i \neq j$ we exchange equations 
			$$\la A_i, X \ra = b_i \; \text{and} \; \la A_j, X \ra = b_j. \;  $$
			\item \label{replace} We replace an equation by a linear combination of equations. That is, for some \mbox{$i \in \{1, \dots, m \}$} 
			 and $y \in \rad{m}$ such that $y_i \neq 0$ 
			we replace 
			$$\la A_i, X \ra = b_i \; \text{by} \; \la \A^* y, X \ra = \la b, y \ra \; 
			$$
		
		\end{enumerate}
		
		\item  a \emph{reformulation} of \eqref{p} is any SDP obtained by applying some of the above operations (in any order). 
		
	\end{enumerate}
\end{Definition}

To construct a reformulation of \eqref{p} it is enough to rotate all $A_i$ and $C$ just once, by the product of all rotation matrices used in the reformulation process. 


The following proposition is immediate from the definition of a reformulation:
\begin{Proposition} \label{prop-reform-invariance} 
	Suppose we reformulate \eqref{p} and we let $T$ be the product of all rotation matrices used in the reformulation process. 
Then 
	\begin{enumerate}
		\item \label{prop-reform-invariance-p}  $X$ is feasible (optimal) in \eqref{p} before the reformulation iff 
		$T^{- 1 }  X T^{- \top}$  is feasible (optimal) after the reformulation.
		\item \label{prop-reform-invariance-d} $Z$ is feasible (optimal) in \eqref{d} before the reformulation iff 
			$T^{\top}  Z T$    is feasible (optimal) after the reformulation. 
		\end{enumerate}
	\qed 
\end{Proposition}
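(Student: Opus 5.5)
The plan is to treat the three kinds of operations in Definition \ref{definition-reformulation} one at a time and show that each one preserves the claimed correspondence, with its own rotation matrix. Correspondences of this type compose: if one operation maps $X \mapsto T_1^{-1} X T_1^{-\top}$ and the next maps $X \mapsto T_2^{-1} X T_2^{-\top}$, then the composite map is $X \mapsto (T_1T_2)^{-1} X (T_1 T_2)^{-\top}$, and the dual maps compose the same way, $Z \mapsto (T_1T_2)^\top Z (T_1T_2)$. So by induction on the number of operations it is enough to check a single operation. For the row operations (exchange and replacement) we take $T = I$.

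First, the row operations. Exchanging two equations plainly leaves both the primal feasible set and the dual feasible set unchanged. For a replacement with $y_i \neq 0$, the new constraint operator is $\mathcal{A}' = M\mathcal{A}$ and the new right-hand side is $b' = Mb$, where $M$ is the identity with row $i$ replaced by $y^\top$. This $M$ is invertible precisely because $y_i \neq 0$. Hence $\mathcal{A}'X = b'$ holds iff $\mathcal{A}X = b$. On the dual side, $\mathcal{A}'^* y' = \mathcal{A}^*(M^\top y')$ and $\langle b', y'\rangle = \langle b, M^\top y'\rangle$. Since $M^\top$ is a bijection on $\mathbb{R}^m$, the set of pairs (slack $Z = C - \mathcal{A}^*y$, objective value) is unchanged. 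The primal objective $\langle C, X\rangle$ does not change either. So feasibility and optimality are preserved on both sides.

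Second, a rotation by $T$. Replace $A_i$ by $T^\top A_i T$ and $C$ by $T^\top C T$. For the primal, the key identity is
\[
\langle T^\top A_i T, T^{-1} X T^{-\top}\rangle = \operatorname{trace}(T^\top A_i X T^{-\top}) = \langle A_i, X\rangle,
\]
and the same identity holds with $C$ in place of $A_i$. Also $X \succeq 0$ iff $T^{-1}XT^{-\top} \succeq 0$, because congruence by an invertible matrix preserves semidefiniteness. Therefore the map $X \mapsto T^{-1}XT^{-\top}$ is a bijection between the old and new feasible sets, and it preserves objective values, so it also maps optimal solutions to optimal solutions. For the dual, the same $y$ gives the new slack $T^\top C T - \sum_i y_i T^\top A_i T = T^\top Z T$, with the same objective value $\langle b, y\rangle$. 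Again congruence preserves semidefiniteness, so feasibility and optimality of $Z$ correspond to those of $T^\top Z T$.

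Nothing here is a serious obstacle. The only points that need care are that the replacement operation is invertible (this is exactly where $y_i \neq 0$ is used), and that the direction of the transposes composes correctly across a sequence of operations. That composition is what the induction in the first paragraph handles.
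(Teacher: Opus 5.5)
Your proof is correct and takes the same route as the paper, which gives no argument beyond calling the proposition immediate from the definition of a reformulation. Your operation-by-operation check supplies exactly the routine details left implicit there: the row operations act through the invertible matrix $M$ (using $y_i \neq 0$), rotations act through the trace identity and congruence, and the composition rule shows that $T = T_1 T_2 \cdots$ is the product taken in the order the rotations are applied.
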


For a sequence of real numbers $r_1, r_2, \dots, r_t$ and $1 \leq i \leq j \leq t$ we write 
$$
r_{i:j} := r_i + r_{i+1} + \dots + r_j. 
$$
Also, for brevity we just write
	$r_{1:i-1}$ for $r_{1:(i-1)};$ and $r_{1:i+1}$ for $r_{1:(i+1)};$ etc.

\begin{Definition} \label{definition-regfr} 
	We say that $(Y_1, \dots, Y_t)$ is a {\em regular facial reduction sequence}  {\em for $\psd{n}$} 
	if i) all $Y_i$ are in $\symn; $ and ii) after 
	a simultaneous permutation of rows and columns
 they have the form 
	
		\begin{equation} \label{eqn-regfr} 
			Y_1   = 
			\bordermatrix{
				& \overbrace{\qquad}^{\textstyle r_{1}} &  \overbrace{\qquad \qquad}^{\textstyle{n-r_1 }} \cr\\
				& \Lambda_1  &  0   \cr
				& 0   &  0  \cr}, \, \dots, Y_i  = 
			\bordermatrix{
				& \overbrace{\qquad }^{\textstyle r_{1:i-1}} & \overbrace{\qquad}^{\textstyle r_{i}} & \overbrace{\qquad}^{\textstyle n-r_{1:i}} \cr\\
				& \times  &  \times  &  \times \cr
				& \times  &  \Lambda_i    &  0 \cr
				& \times  &  0  &  0 \cr} 
		\end{equation}
		for  $i= 1,\dots, t. \,$ Here the $r_i$ are nonnegative integers, the $\Lambda_i$ diagonal positive definite matrices, and the $\times$ symbols 
	stand for blocks with arbitrary elements. 
	
	If $(Y_1, \dots, Y_t), \,$ is a regular facial reduction sequence of the form
	\eqref{eqn-regfr}, then we say it {\em has structure 
	$(P_1, \dots, P_t), \,$ } where  
	\begin{equation}
		\ba{rcl}
		P_1 & = &  \{ 1, \dots, r_1  \}   \\
		P_2 & = & \{ r_1 + 1, \dots, r_{1:2} \} \\
		& \vdots & \\
		P_t & = & \{ r_{1:t-1} + 1, \dots, r_{1:t} \}.
		\ena
	\end{equation}
	If rows and columns in all the $Y_j$ are simultaneously permuted by the permutation $\pi, \,$ then the resulting  regular facial reduction sequence has 
	structure  $(\pi(P_1), \dots, \pi(P_t)).$

\end{Definition}

	\begin{Example} \label{example-sturm-3} 
		In Example \ref{example-sturm-2}
		\begin{itemize}
			\item 
			$(Z^*, A_1, A_2)$ is a regular facial reduction sequence with structure 
			$(\{1\}, \{2\}, \{3 \}).$
			\item  $(X^*, Y_1, Y_2)$ is 
			a regular facial reduction sequence with structure 
			$(\{4\}, \{3\}, \{2 \}).$
		\end{itemize} 
	
	\end{Example}

Regular facial reduction sequences certify that certain rows and columns of a psd matrix must be zero, just like they 
did in Example \ref{example-sturm-2}. To see this, suppose $(Y_1, \dots, Y_t)$ is such a sequence, with the $r_i$ as in \eqref{eqn-regfr}, and let $X \succeq 0$ satisfy $\langle Y_i, X \rangle = 0$ for all $i$. Then $\langle Y_1, X \rangle = 0$ implies that the first $r_1$ rows and columns of $X$ are zero; similarly, $\langle Y_2, X \rangle = 0$ implies that the next $r_2$ rows and columns are zero, and so on.

For brevity, we introduce the following terminology. 
Let $(Y_1, \dots, Y_t)$ be a regular facial reduction sequence. 
We say that $(Y_1, \dots, Y_t)$ reduces $\psd{n}$ to 
$\psd{n} \cap Y_1^\perp \cap \dots \cap Y_{t}^\perp.$  

For nonnegative integers $r$ and $s$, we use the notation
$$
\begin{pmatrix}
0  & 0 \\
0 & \oplus_r  \end{pmatrix}, \, \text{and by} \, 	\bpx \oplus_s  & 0 \\
0 & 0 \epx.
$$ 
to denote, respectively, 
the subset $\psd{n}$  in which only the lower right order $r$ minor may be nonzero; and 
the subset of $\psd{n}$  in which only the upper left order $s$ minor may be nonzero.

\subsection{The normal form for lack of strict complementarity}
\label{subsection-thenormalform}

In this subsection we formally state our main result, Theorem \ref{thm-main}. We need one more definition. Recall 
$X^*$ and $Z^*$ from \eqref{eqn-Xstar-Zstar}. We will rely on rotations with a special structure. 
We say that a rotation matrix $T$ is {\em admissible,} 
if it has the form 
$$
T \, = \, \bpx I_s  & 0 & 0 \\
0 & U & 0 \\
0 & 0 & I_r  \epx
$$
for some $U$ invertible matrix of order $n-r-s.$ 
  
\begin{Theorem} \label{thm-main} 
	 Assume that $X^*$ and $Z^*$ given in 
	 \eqref{eqn-Xstar-Zstar} is a pair of optimal solutions 
	 in \eqref{p}$\mhyphen$\eqref{d}. Then 
	 they are both maximum rank optimal solutions 
		  $\Leftrightarrow$  there are positive integers  $k$ and $\ell$ 
	and a reformulation 
	\begin{equation}\label{p-prime}  
		\begin{array}{rl} 
			\inf  & \,\, \la C^\prime, X   \ra  \\
			s.t. & \,\, \la A_i^\prime, X \ra \, =  \, b_i^\prime \; \text{for} \; i=1, \dots, m  \\
			& \,\, X \succeq 0, 
		\end{array} \tag{$P^\prime$} 
	\end{equation}
	with the following properties:
	\begin{enumerate}
		\item  \label{thm-main-1} 
		\begin{enumerate}
			\item \label{thm-main-1-a}  $b_1^\prime  = \dots = b_k^\prime  = 0.$ 
			\item  \label{thm-main-1-b}  $(Z^*, A_1^\prime, \dots, A_k^\prime)$ is a regular facial reduction sequence, which reduces 
			$\psd{n}$ to 
			$$
			\bpx 0 & 0 \\
			0 & \oplus_r  \epx.
			$$
		\end{enumerate}  
		
		\item \label{thm-main-2}  
			there is $Y_1, \dots, Y_\ell$ such that 
					\begin{enumerate}
				\item \label{thm-main-2-a}
				\begin{equation} \nonumber 
			\begin{rcases}
				\mathcal{A}^{\prime}Y_j = 0 \\
				\langle C^{\prime}, Y_j \rangle = 0
			\end{rcases}
			\quad \text{for } j = 1,\dots,\ell .
		\end{equation}
		\item \label{thm-main-2-b} $(X^*, Y_1, \dots, Y_\ell)$ is a regular facial reduction sequence, 
		which reduces 
		$\psd{n}$ to 
		$$
		\bpx \oplus_s  & 0 \\
		0 & 0 \epx.
		$$
		
	\end{enumerate}
	\item \label{thm-main-3} The only rotations in the reformulation process are admissible.

		\end{enumerate}
	 Here we understand that $\A^\prime$ is represented with 
	 symmetric matrices   $A_i^\prime$ as 
	 \begin{equation} \label{eqn-repr-Aprime} 
	 	\A^\prime X \, = \, ( \la A_1^\prime,  X \ra, \dots, \la A_m,  X \ra)^{\top}. 
	 \end{equation}
	 
\end{Theorem}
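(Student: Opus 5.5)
We split into the two directions. The "$\Leftarrow$" direction is elementary. Suppose \eqref{p-prime} has the stated properties. The reformulation uses only admissible rotations, so $T^{-1}X^*T^{-\top}=X^*$ and $T^\top Z^* T = Z^*$. By Proposition \ref{prop-reform-invariance} it therefore suffices to argue in \eqref{p-prime}.

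For the primal, let $X$ be optimal in \eqref{p-prime}. Then $\la Z^*, X\ra = 0$ by complementarity, and $\la A_i^\prime, X\ra = b_i^\prime = 0$ for $i \le k$. Since $(Z^*, A_1^\prime,\dots,A_k^\prime)$ reduces $\psd{n}$ to $\bpx 0&0\\0&\oplus_r\epx$, the matrix $X$ has rank at most $r = \rank X^*$.

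For the dual, let $Z = C^\prime - \A^{\prime *}y$ be optimal. Then $\la X^*, Z\ra = 0$. Property \ref{thm-main-2-a} gives $\la Y_j, Z\ra = \la Y_j, C^\prime\ra - \la \A^\prime Y_j, y\ra = 0$. Property \ref{thm-main-2-b} then forces $Z \in \bpx \oplus_s & 0\\0&0\epx$, so $\rank Z \le s$.

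The "$\Rightarrow$" direction is the substantive one. We build the two sequences separately by iterated facial reduction. The plan has three steps.

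\textbf{Primal side.} Assume $X^*$ has maximum rank. Delete the first $s$ rows and columns, which are forced to zero by $\la Z^*, X\ra = 0$. Consider the system $\{X \succeq 0,\ \la Z^*,X\ra = 0,\ \A X = b\}$ restricted to the middle block $M = \{s+1,\dots,n-r\}$ coupled with the last block. Maximality of $\rank X^*$ says that no feasible optimal $X$ has a nonzero component in the middle block $M$.

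\begin{itemize}
\item Apply the theorem of the alternative (Proposition \ref{prop-fr}) to the reduced system. It yields $y$ with $\la b,y\ra = 0$ and $\A^* y$ psd and nonzero on the current face.
\item Replace an equation by $\la \A^*y, X\ra = \la b,y\ra$. This is an allowed row operation after reordering so that $y_i \neq 0$, and it makes $b_1^\prime = 0$.
\item Diagonalize its middle block by an admissible rotation $U$ so that it takes the form $\Lambda_1 \oplus 0$.
\item Shrink the face and repeat. Each step strictly reduces the middle block, so the process stops after $k$ steps with the middle block eliminated.
\end{itemize}

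The subtlety is that the alternative must produce a matrix whose restriction to the \emph{remaining middle} block is psd and nonzero, while its entries in the last $r$ rows and columns are arbitrary. We get this by applying Proposition \ref{prop-fr} to the system projected on the face $\{X : X(P)=0 \text{ for reduced indices } P\}$. Then "no feasible point with nonzero middle block" means that Slater's condition fails relative to the face $\psd{|M|}\oplus\psd{r}$ with the $\Lambda$-block fixed. Making this precise is delicate. We would use that $X^*+\epsilon W$ stays feasible for small $\epsilon$ along feasible directions, which converts maximality of rank into infeasibility of a homogeneous strict system. The exactness of $b^\prime_i = 0$ comes from the fact that $X^*$ satisfies the equation and has zero middle/top part.

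\textbf{Dual side.} This step is analogous, working with $Y$ in the null space $\{Y : \A^\prime Y = 0,\ \la C^\prime, Y\ra = 0\}$. Maximality of $\rank Z^*$ means that no dual optimal slack has nonzero middle block. Dual optimal slacks are exactly the $Z$ in $(C^\prime + \operatorname{range}\A^{\prime *}) \cap \psd{n}$ with $\la X^*, Z\ra = 0$. The alternative for this affine-subspace system gives $Y_j$ in the orthogonal complement of $\operatorname{span}(C^\prime, A_i^\prime)$ that is psd and nonzero on the current middle block. We then diagonalize by a further admissible rotation. The rotation acts on the middle block only, so it does not destroy the primal structure, because regularity of the $A_i^\prime$ sequence is preserved up to re-permuting.

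\textbf{Compatibility.} The main obstacle is that the dual-side rotations must keep $(Z^*, A_1^\prime,\dots,A_k^\prime)$ regular. A general $U$ scrambles the $\Lambda_i$ blocks. We would handle this by doing the dual reduction first within the blocks left by the primal structure, or by noting that the primal certificate is only needed up to congruence: a regular facial reduction sequence remains one under any rotation that is block lower-triangular with respect to its structure $(P_1,\dots,P_k)$, after re-diagonalizing via Proposition \ref{prop-fr}-type Schur arguments. We expect choosing rotations that are simultaneously triangular for both nested flags to be the hardest step. Since any two flags admit a common adapted basis (Bruhat decomposition), we would try to use that fact, followed by row operations to restore diagonal $\Lambda_i$.
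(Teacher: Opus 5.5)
Your $\Leftarrow$ direction is correct and is the paper's argument. Your two one-sided reductions also match Lemmas \ref{lemma-Xstar} and \ref{lemma-Zstar} in spirit. Three small corrections there: the theorem of the alternative is Proposition \ref{proposition-alternative}, not Proposition \ref{prop-fr}; no perturbation of $X^*$ is needed, because if the reduced system on the current face (of order larger than $r$) were strictly feasible it would have a solution of rank larger than $r$; and that face is not $\psd{|M|}\oplus\psd{r}$, since the blocks coupling the middle and the last $r$ indices are free.

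The genuine gap is the merging step. The paper calls this the most technical part of the proof, and you leave it as an expectation. Your dual-side claim that an admissible rotation ``acts on the middle block only, so it does not destroy the primal structure'' is false. A general $U$ destroys both the zero pattern and the diagonal blocks of $(Z^*, A_1', \dots, A_k')$, as you concede in your next paragraph.

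Your proposed repair does not close the gap either. A common adapted basis can be arranged. Because the $Y_j$ are rotated contragrediently to the $A_i'$, it must be adapted to the subspace chain of the $A'$-faces and to the orthogonal complements of the subspace chain of the $Y$-faces, and it must fix the first $s$ and last $r$ coordinates to stay admissible. But such a basis only gives both sequences the right zero pattern; the positive definite blocks $A_i'(P_i,P_i)$ and $Y_j(Q_j,Q_j)$ are in general not diagonal. Elementary row operations cannot fix this. Combining $A_i'$ with $A_t'$, $t<i$, leaves $A_i'(P_i,P_i)$ unchanged. Adding $A_t'$, $t>i$, puts $\Lambda_t$ where $A_i'$ must vanish. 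The $Y_j$ are not equations of \eqref{p-prime} at all. What you need is one congruence that preserves both flags (block lower triangular in the $P$-order, block upper triangular in the $Q$-order) and diagonalizes all these blocks at once. Proposition \ref{prop-fr} gives no such control, which is exactly why the paper does not simply apply it.

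The paper handles this by keeping $(Z^*, A_1', \dots, A_k')$ regular and making $(X^*, Y_1, \dots, Y_{j+1})$ regular by induction on $j$. On $R = [n]\setminus Q_{0:j}$, split as $R_i = R\cap P_i$, it does two things:
\begin{itemize}
\item it diagonalizes $Y_{j+1}(R_i,R_i)$ by an $R_i$-block rotation with orthonormal block, step \ref{tr-1};
\item it clears $Y_{j+1}(R_i,R_t)$, $t>i$, by adding columns and rows of $R_i$ to those of $R_t$, step \ref{tr-2}.
\end{itemize}
Together these amount to a block $LDL^\top$-type elimination along the $P$-order. The rotations are admissible and change only the $\times$ parts of $Y_0,\dots,Y_j$. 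Their contragredient action on the $A_s'$ is either an orthonormal change inside some $P_i$ (harmless once the $\Lambda_i$ are scaled to identities) or adds later $P$-columns to earlier ones, which touches only arbitrary blocks. So the primal certificate survives, and $Q_{j+1}$ is read off the positive diagonal of $Y_{j+1}(R)$.

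Your flag route can also be completed, but it needs an argument of this kind. After both chains are coordinate, uniqueness of block $LDL^\top$ factorizations reduces the task to each cell $P_i\cap Q_j$. There you need a single $G$ with $G^\top \hat D G$ and $G^{-1}\tilde D G^{-\top}$ both diagonal, where $\hat D$ and $\tilde D$ are two positive definite Schur complements. Take $G=\hat D^{-1/2}O$ with $O$ orthogonal diagonalizing $\hat D^{1/2}\tilde D\hat D^{1/2}$; the off-diagonal cell blocks are then determined. Your proposal contains neither argument, so the $\Rightarrow$ direction is not proved.
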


To  build intuition, here we prove the ``easy" direction. 

{\bf Proof of $\Leftarrow$:} Let us assume the statement on the right hand side, and denote the dual of \eqref{p-prime} by $(D^\prime).$ 
Let $T$ be the product of all matrices used in the reformulation process. 
Then $T$ is admissible. 

We first need that $X^*$ is optimal in  \eqref{p-prime} and $Z^*$ is optimal in $(D^\prime).$ 
Indeed, $T^{- 1} X^* T^{- \top}  =  X^* $ and 
$T^{\top} Z^* T  =  Z^*, $ 
so this follows by Proposition \ref{prop-reform-invariance}.

We next prove that $X^*$ and $Z^*$ are  maximum rank optimal solutions in \eqref{p} and \eqref{d}, respectively. 
For that, let $X$ be an optimal solution in \eqref{p} and 
  $Z$ an optimal solution in \eqref{d},   and define 
 \begin{equation} \label{eqn-XXprime} 
 	\begin{array}{rcl}
 		X^\prime & := & T^{- 1 } X T^{- \top} \\ 
 		Z^\prime & := & T^{\top} Z T. 
 	\end{array}
 \end{equation}
Proposition \ref{prop-reform-invariance} implies that 
 $X^\prime$ is optimal in \eqref{p-prime}. 
 But $X^\prime$ has $0$ inner product with $Z^*, A_1^\prime, \dots, A_k^\prime.$ 
 Thus  
 $X^\prime $ is in 
  $\left(\scriptstyle
 \begin{smallmatrix}
 	0 & 0 \\
 	0 & \oplus_r
 \end{smallmatrix}
 \right)$
  hence so is $X.$ So indeed, $X^*$ is a maximum rank optimal solution in \eqref{p}.
   An analogous argument proves the statement for $Z^*.$ 
  \qed 

\begin{Example} \label{example-sturm-3} (Examples \ref{example-sturm} and  \ref{example-sturm-2} continued) 
The SDP \eqref{problem-sturm-2} does not need to be reformulated, as it is already in the normal form stated in Theorem \ref{thm-main}.
\end{Example}

The following remarks and Corollary \ref{Corollary-all-partitions} 
explain some details  in Theorem \ref{thm-main}.

First, Theorem \ref{thm-main} proves  a  stronger result than 
$X^*$ having maximum rank: it shows that when $X^*$ given in \eqref{eqn-Xstar-Zstar} 
has maximum {\em rank}, it also has "maximum support." 
That is, any optimal solution is in 
$\left(\scriptstyle
\begin{smallmatrix}
	0 & 0 \\
	0 & \oplus_r
\end{smallmatrix}
\right)$
This notion of "maximum support" differs 
from the  usual one: it does not mean that 
an optimal solution in \eqref{p} has nonzeros only where $X^*$ does.

Second, we may assume that the positive definite blocks in 
$A_1^\prime, \dots, A_k^\prime$ and in all $Y_j$ are nonempty; 
otherwise, we can simply remove the corresponding 
$A_i^\prime$, $b_i^\prime$, and $Y_j$.

Third, suppose \eqref{p-prime} satisfies the conclusions of Theorem 
\ref{thm-main}. Consider the index sets corresponding to 
the positive definite blocks in $A_1^\prime, \dots, A_k^\prime$ 
and the index sets corresponding to 
the positive definite blocks in $Y_1, \dots, Y_\ell.$ 
Both these index sets 
partition 
 the set that 
we informally call the "gap set"
\begin{equation} \label{eqn-define-G} 
G := \{s+1, \dots, n - r \}.
\end{equation} 
This argument also works in the other direction: any two partitions of $G$ give rise 
to an SDP that lacks strict complementarity. We state this result precisely:

\begin{Corollary} \label{Corollary-all-partitions}
	Suppose $X^*$ and $Z^*$ are given in \eqref{eqn-Xstar-Zstar} and 
	$(P_1, \dots, P_k)$ and $( Q_1, \dots, Q_\ell)$ partition the set $G$ 
	defined in \eqref{eqn-define-G}. 
		Let 
	$$
	P_0 = \{1, \dots, s\} \; \text{and} \; Q_0 = \{  n - r  +1, \dots, n \, \}.   
	$$
		Then there is an SDP of the form \eqref{p-prime}  and 
	$Y_1, \dots, Y_\ell$ so the following holds: 
	
				\begin{enumerate}
				\item  \label{Corollary-all-partitions-1}    \eqref{p-prime} with 	$Y_1, \dots, Y_\ell$ 
				satisfies the conclusions of Theorem \ref{thm-main}; 
				\item  \label{Corollary-all-partitions-2} the 
				structure of $(Z^*, A_1^\prime, \dots, A_k^\prime)$ is $(P_0, P_1, \dots, P_k).$ 
				%
				\item  \label{Corollary-all-partitions-3} the structure of $(X^*, Y_1, \dots, Y_\ell)$ is $(Q_0, Q_1, \dots, Q_\ell).$
			\end{enumerate}
			
			\qed
	\end{Corollary}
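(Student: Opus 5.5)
We construct the SDP explicitly, using only the structure of the two partitions, and then check the three conclusions of Theorem \ref{thm-main}. No reformulation is needed, so condition \ref{thm-main-3} holds trivially; the identity is an admissible rotation. Order the blocks of $G$ as they appear in the two partitions. Let $P_i$ have size $p_i$ and $Q_j$ have size $q_j$, and let $P_{0:i} := P_0 \cup \dots \cup P_i$, similarly $Q_{0:j}$. We take $m = k$ constraints, $b' = 0$, and $C' = Z^*$.

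\textbf{Choice of the $A_i'$.} For $i = 1, \dots, k$ let $A_i'$ have an identity block on $P_i \times P_i$ and zeros in every row and column indexed by $[n] \setminus (P_{0:i})$. The entries in $P_{0:i-1} \times P_{0:i}$ are left free and will be fixed in the next step. By construction, $(Z^*, A_1', \dots, A_k')$ is then a regular facial reduction sequence with structure $(P_0, P_1, \dots, P_k)$, after the permutation that orders these index sets. It reduces $\psdn$ to matrices supported on $[n] \setminus (P_0 \cup G) = Q_0$, which is the required face. The constraints are homogeneous, and $X^*$ is feasible because each $A_i'$ vanishes on $Q_0 \times Q_0$. $Z^*$ is dual feasible with $y = 0$, since $C' - 0 = Z^* \succeq 0$. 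Since $\la X^*, Z^* \ra = 0$, both are optimal.

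\textbf{Choice of the $Y_j$, and the main obstacle.} Let $Y_j$ have an identity block on $Q_j \times Q_j$ and zeros in all rows and columns outside $Q_{0:j}$. The free part of $Y_j$ consists of its entries in $Q_{0:j-1} \times Q_{0:j}$; in particular the entries in $Q_0 \times Q_j$ are free. We need $\la A_i', Y_j \ra = 0$ for all $i, j$ and $\la C', Y_j \ra = \la Z^*, Y_j \ra = 0$. The second condition is automatic, because $Y_j$ is zero on $P_0 \times P_0 \subseteq$ complement of $Q_{0:j}$.

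For the first condition, the only possibly nonzero contribution from the diagonal blocks is $\la I_{P_i}, Y_j \ra = |P_i \cap Q_j|$ when $P_i$ meets $Q_j$. To cancel it we use an off-diagonal entry. Pick $u \in P_0$, so $s \ge 1$; the case $s = 0$ is handled by the symmetric trick on the $Y$ side, using $Q_0$ with $r \ge 1$. We use the pair $(u, w)$ with $w \in Q_0$. Set $(A_i')_{uw}$ free: this is allowed because $u \in P_0 \subseteq P_{0:i-1}$, but $w \in Q_0 \notin P_{0:i}$, so the pair is not actually permitted in $A_i'$.

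So we instead use, for each intersecting pair, an index $g \in P_i \cap Q_j$ together with $w \in Q_0$. In $Y_j$ we set $(Y_j)_{gw} = (Y_j)_{wg} = -\tfrac12 \alpha$, which is allowed because $g \in Q_j$ and $w \in Q_0$. In $A_i'$ we set $(A_i')_{gw}$; this fails for the same reason. The cleaner route is to couple through $P_0$ on the $A$ side. Pick $u \in P_0$ and $g \in P_i \cap Q_j$, and set $(A_i')_{ug} = (A_i')_{gu} = \beta$. This is allowed because $u \in P_{0:i-1}$ and $g \in P_{0:i}$. Then set $(Y_j)_{ug} = (Y_j)_{gu}$; this requires $u \in Q_{0:j}$, which is false.

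The honest difficulty is therefore to choose the free entries so that all $k\ell$ orthogonality equations hold simultaneously. Each $A_i'$ can only use rows and columns in $P_{0:i}$, and each $Y_j$ only those in $Q_{0:j}$. The usable overlap for a pair $(i, j)$ is $P_{0:i} \cap Q_{0:j}$, which is contained in $G$ and consists of earlier blocks of both partitions. Sturm's example shows the intended mechanism. One takes $g \in P_i \cap Q_j$ and an index $h$ lying in an earlier $P$-block and in $Q_{0:j}$. One puts $-1$ at $(g, h)$ in $A_i'$, which cancels against the identity of $Y_j$, and also puts $1$ at $(g, h)$ in $Y_j$ if needed.

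I would therefore prove, by induction on the ordering of the indices of $G$, that these off-diagonal entries can be chosen to solve the linear system. This is a triangular system once the blocks are ordered, and the free variables of later matrices absorb the residuals of earlier ones. I expect this bookkeeping to be the main step. The fallback is to set up the system $\la A_i', Y_j \ra = 0$ as linear equations in the free entries of the $Y_j$ with the $A_i'$ fixed, and verify that it has full row rank because each $Y_j$ has a free entry paired with a nonzero entry of each $A_i'$ it must annihilate.
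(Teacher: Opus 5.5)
There is a genuine gap, and it comes from a misreading of Definition \ref{definition-regfr}. In a regular facial reduction sequence, the rows and columns indexed by the \emph{earlier} blocks are unrestricted across all of $[n]$. So in $A_i'$ the whole block $A_i'(P_{0:i-1},[n])$ is free, in particular $A_i'(P_{i-1},Q_0)$. Likewise, in $Y_j$ the whole block $Y_j(Q_{0:j-1},[n])$ is free, in particular $Y_j(Q_0,P_{i-1})$ for every $i$ and every $j\ge 1$.

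You instead forced $A_i'$ to vanish in all rows and columns outside $P_{0:i}$, and $Y_j$ outside $Q_{0:j}$. With that narrower pattern the construction is not merely unfinished but impossible. Since $P_0\cap Q_{0:j}=\emptyset$ and $Q_0\cap P_{0:i}=\emptyset$, the supports of $A_1'$ and $Y_j$ meet only inside $(P_1\cap Q_{1:j})\times(P_1\cap Q_{1:j})$, where $A_1'$ is its positive diagonal block. Let $j_0$ be the smallest index with $P_1\cap Q_{j_0}\neq\emptyset$. Then $\langle A_1',Y_{j_0}\rangle=\sum_{v\in P_1\cap Q_{j_0}}(A_1')_{vv}(Y_{j_0})_{vv}>0$, and no free entry is available to cancel it. So the triangular induction you postpone as bookkeeping fails in your setup, and so does the full-row-rank fallback. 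Your reading of Sturm's example is also off: there the cancelling entries of $A_1$ and $A_2$ sit at $(1,4)$ and $(2,4)$. Their partner index is $4\in Q_0$, not an index inside $G$.

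The missing idea is exactly the coupling through $Q_0$ that you discarded. Your first attempt, pairing $u\in P_0$ with $w\in Q_0$, was legitimate. The paper proves the corollary by running Algorithm \ref{algo-nonstrict-SDP}, with Step 3 choosing sequences of the prescribed structures $(P_0,\dots,P_k)$ and $(Q_0,\dots,Q_\ell)$. The Base Equations Algorithm \ref{algo-base-eqn} then handles $i=1,\dots,k$ in turn. At step $i$ it zeroes the blocks $A_i'(P_{i-1},Q_0)$ and $Y_j(P_{i-1},Q_0)$ for all $j$. It then refills them so that $2\langle A_i'(P_{i-1},Q_0),Y_j(P_{i-1},Q_0)\rangle$ cancels the remaining part of $\langle A_i',Y_j\rangle$ for all $j$ simultaneously. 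For example, put a single $1$ at some $(u,w)$ in $A_i'$ and choose each $(Y_j)_{uw}$ accordingly. Since $A_t'(P_{i-1},Q_0)=0$ for $t<i$, the equations settled earlier are not disturbed. The system is therefore genuinely triangular and always solvable, as in the proof of Theorem \ref{thm-alg-nonstrict-correct}.

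This needs $P_0$ and $Q_0$ nonempty, i.e.\ $r,s\ge 1$. Your ``symmetric trick'' for $s=0$ does not exist: then $A_1'$ is supported on $P_1\times P_1$, and $\langle A_1',Y_{j_0}\rangle>0$ for every admissible $Y_{j_0}$. The rest of your proposal survives once the sparsity pattern is corrected, because $A_i'(Q_0,Q_0)=0$ and $Y_j(P_0,P_0)=0$ still hold. That covers $m=k$, $b'=0$, $C'=Z^*$, the identity rotation, optimality of $X^*$ and $Z^*$, $\langle Z^*,Y_j\rangle=0$, and the reduced faces.
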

		Corollary \ref{Corollary-all-partitions} means that for a fixed rank of $X^*$ and $Z^*$ 
	all SDPs that fail strict complementarity can be identified with two partitions of the gap set $G.$ Thus, loosely speaking, 
	there are only finitely many 
	"representative" SDPs that fail strict complementarity.

We prove Corollary \ref{Corollary-all-partitions} in Subsection 
\ref{subsection-generate-nonstrict-SDP-nonSlater}.

\section{Generating SDPs that lack strict complementarity}
\label{section-generate-nonstrict-SDP} 

In this section we use the normal form of Theorem \ref{thm-main} to 
generate any SDP that fails strict complementarity. The main idea is as follows. We fix 
$X^*$ and $Z^*$ as in \eqref{eqn-Xstar-Zstar}. 
In the SDPs we construct we just write $A_i $ and $C$ for the matrices, i.e., we omit the 
prime notation.  
We first construct a minimal  SDP which fails strict complementarity, and it is
in the normal form of Theorem \ref{thm-main}. By "minimal" we mean $k=m.$ 

We also construct 
the accompanying $Y_j$ and ensure the following 
{\em base equations} hold:
\begin{equation} \label{eqn-base} \tag{$\mathrm{BASE}$} 
		\la A_i, Y_j \ra = 0 \, \text{for} \; i=1, \dots, k; j=1, \dots, \ell. 
	\end{equation} 
Next we find $A_{k+1}, \dots,  A_m$ to have zero inner product with all the $Y_j$ 
 and set $b$ and $C$ suitably. Thus, afterwards 
$(Z^*, A_1, \dots, A_k)$ still certify maximum rank of $X^*$ and still certify maximum rank of
$(X^*, Y_1, \dots, Y_\ell)$ of $Z^*.$

We give two algorithms. Algorithm \ref{algo-nonstrict-SDP} in Subsection \ref{subsection-generate-nonstrict-SDP-nonSlater} 
outputs an SDP that fails strict complementarity. 
Further, this algorithm has {\em any} such SDP among its outputs.
Accordingly, the generated SDPs may not satisfy Slater's condition on the primal or dual side (or both).

 Algorithm \ref{algo-nonstrict-Slater-SDP} in Subsection \ref{subsection-generate-Slater-nonstrict-SDP} outputs an SDP which fails 
strict complementarity, but satisfies Slater's condition on both the primal and dual sides.
This algorithm has {\em any} such SDP among its outputs.

\subsection{Without guaranteeing Slater's condition} 
\label{subsection-generate-nonstrict-SDP-nonSlater}

The main generating algorithm now follows:

	\begin{algorithm}[H]
		\caption{Construct SDP that fails strict complementarity} 
		\label{algo-nonstrict-SDP}
		\begin{algorithmic}[1]
			\Statex  \label{algo-nonstrict-SDP-input}  {\bf Input:} Integers $n \geq 3, \;  m \geq 2.$
			\Statex  \label{algo-nonstrict-SDP-output} {\bf Output:} 
				\begin{itemize}
				\item SDP in the normal form of \eqref{p}
				which fails strict complementarity with \eqref{d}, 
				and $Y_1, \dots, Y_\ell$ such that
								 \eqref{p} and $Y_1, \dots, Y_\ell$
					satisfy   the conclusions of Theorem \ref{thm-main}.
												\item A maximum rank  optimal solution $X^*$ of \eqref{p} and
				$Z^*$ of \eqref{d} of the form  \eqref{eqn-Xstar-Zstar}.
			\end{itemize}

			\State  \hspace{.05cm} Choose $\; r, s \, $ positive integers such that $r + s < n $ and $ k, \ell \in \{1, \dots, n-r-s \}.$ 
	\State \label{algo-nonstrict-SDP-choose-X*-Z*} Choose $X^*$ and $Z^*$ of the form 
	\eqref{eqn-Xstar-Zstar}, where $\Lambda$ and $\Gamma$ are positive definite, 
	of order $r$ and  $s, \,$ respectively.
	\State \label{algo-nonstrict-SDP-choose-Ai-Yj} Choose $A_1, \dots, A_k$ and $Y_1, \dots, Y_\ell$ such that 
	\begin{itemize}
		\item  $(Z^*, A_1, \dots, A_{k})$ is a regular facial reduction sequence which reduces $\psd{n}$ to 
		$\bpx 0 & 0 \\ 0 & \oplus_r \epx. $ 
		\item  $(X^*, Y_1, \dots, Y_{\ell})$ is a regular facial reduction sequence which reduces 
		$\psd{n}$ to 
		$\bpx \oplus_s  & 0 \\ 0 & 0 \epx. $ 
	\end{itemize}   
	\State \label{algo-nonstrict-SDP-adjust-Ai-Yj} Use Algorithm \ref{algo-base-eqn}, the 
	Base Equations Algorithm,  to make sure 
	the  \eqref{eqn-base} equations are satisfied.
		\State  \label{algo-nonstrict-SDP-extend}  Choose $A_{k+1},\dots,A_m$ so they have zero inner product 
		with $Y_1,\dots,Y_{\ell}.$
		\State \label{algo-nonstrict-SDP-set-bi} Set $b_i:= \la A_i,  X^* \ra $ for $i \in \{1, \dots, m\}, \, $  $C := Z^* + \sum_{i=1}^m y_i A_i$ for some $y \in \rad{m}.$ 
				\end{algorithmic}
	\end{algorithm}

Next we state the Base Equations Algorithm: 

	\begin{algorithm}[H]
	\caption{Base Equations Algorithm} 
	\label{algo-base-eqn} 
	\begin{algorithmic}[1]
		\Statex {\bf Input:} The sequences $(Z^*, A_1, \dots, A_k)$ and $(X^*, Y_1, \dots, Y_\ell)$
				chosen in Steps \ref{algo-nonstrict-SDP-choose-X*-Z*} and \ref{algo-nonstrict-SDP-choose-Ai-Yj} of Algorithm \ref{algo-nonstrict-SDP}.
		\Statex {\bf Output:} The same sequences, with some blocks adjusted so the $A_i$ and $Y_j$ 
		 satisfy the \eqref{eqn-base} equations.  
		 \State Let $(P_0, P_1, \dots, P_{k})$  be the structure of 
		 $(Z^*, A_1, \dots, A_{k})$ 
		  \State Let  $(Q_0, Q_1, \dots, Q_{\ell})$ be the structure of $(X^*, Y_1, \dots, Y_{\ell}). \, $ 
			\For{$i = 1:k$}
		\State \label{step-adjust-Ai} Adjust the $(P_{i-1},Q_0)$ block of $A_i, \, Y_1, \dots, Y_\ell$ to satisfy 
		$$
		\la A_i, Y_j \ra = 0 \; \text{for all} \; j=1, \dots, \ell.
		$$
		\EndFor
	\end{algorithmic}
\end{algorithm}
We will use the following facts. Given how the sequences 
$(Z^*, A_1, \dots, A_{k})$ and $(X^*, Y_1, \dots, Y_{\ell})$ are constructed, we have 
\begin{equation} \label{eqn-P0-all_Qj} 
	\ba{rcl} 
	Q_0 & = & [n] \setminus P_{0:k}, \\
	P_0 & = & [n] \setminus Q_{0:\ell}.
	\ena
\end{equation}
\begin{Example} \label{example-sturm-3} (Example \ref{example-sturm} and \ref{example-sturm-2} continued) 
	In this example we show how Algorithm \ref{algo-nonstrict-SDP} constructs the SDP \eqref{problem-sturm-2}. 

Suppose Algorithm \ref{algo-nonstrict-SDP} in 
steps \ref{algo-nonstrict-SDP-choose-X*-Z*} and 
\ref{algo-nonstrict-SDP-choose-Ai-Yj} chose the regular facial reduction sequences 
	\begin{equation} \label{problem-sturm-3} 
		\begin{aligned}
			Z^* = \begin{pmatrix}
				1 & 0 & 0 & 0\\
				0 & 0 & 0 & 0\\
				0 & 0 & 0 & 0\\
				0 & 0 & 0 & 0
			\end{pmatrix}, A_1 &= \begin{pmatrix}
				0 & 0 & 0 & \alpha_1 \\
				0 & 2 & 0 & 0 \\
				0 & 0 & 0 & 0 \\
				\alpha_1  & 0 & 0 & 0
			\end{pmatrix},
			\quad
			A_2 = \begin{pmatrix}  
				0 & 0 & 0 & 0 \\
				0 & 0 & 0 & \alpha_2 \\
				0 & 0 & 2 & 0 \\
				0 & \alpha_2  & 0 & 0
			\end{pmatrix},
			\quad \\ \\
					X^* = \begin{pmatrix}
					0 & 0 & 0 & 0\\
					0 & 0 & 0 & 0\\
					0 & 0 & 0 & 0\\
					0 & 0 & 0 & 1
				\end{pmatrix}, \, Y_1 &= \begin{pmatrix}
				0 & 0 & 0 & \beta_{11}   \\
				0 & 0 & 0 & \beta_{12}  \\
				0 & 0 & 1 & 0 \\
				\beta_{11}  & \beta_{12}  & 0 & 0
			\end{pmatrix},
			\quad
			Y_2 = \begin{pmatrix}
				0 & 0 & 0 & \beta_{21} \\
				0 & 1 & 0 & \beta_{22}  \\
				0 & 0 & 0 & 0 \\
				\beta_{21}  & \beta_{22}  & 0 & 0
			\end{pmatrix}.
		\end{aligned}
	\end{equation}
Their  structures are 
\begin{equation}
	\ba{rcl}
	(P_0, P_1, P_2) & = & ( \{ 1 \}, \,  \{ 2 \}, \, \{ 3 \}), \\
	(Q_0, Q_1, Q_2) & = & ( \{ 4 \}, \,  \{ 3 \}, \, \{ 2 \}), 
	\ena
\end{equation}
respectively.
In \eqref{problem-sturm-3} we denote by  $\alpha_i$ and $\beta_{ij}$ 
the entries in
$(P_{i-1}, Q_0)$ for $i=1,2$ that Algorithm \ref{algo-base-eqn} will set.

The goal of Algorithm \ref{algo-base-eqn} in iteration 1 
is to satisfy $\la A_1, Y_1 \ra = \la A_1, Y_2 \ra = 0.$ 
 For that, it  solves the system 
$$
\bpx \beta_{11} \\ \beta_{21} \epx \alpha_1 = \bpx 0 \\ -1 \epx.
$$	
A possible solution is $\alpha_1 = -1, \, \beta_{11} = 0, \, \beta_{21} = 1.$ 
In iteration 2  the goal is to satisfy $\la A_2, Y_1 \ra = \la A_2, Y_2 \ra = 0.$ 
For that, Algorithm \ref{algo-base-eqn} solves the system 
$$
\bpx \beta_{12} \\ \beta_{22} \epx \alpha_2 = \bpx -1  \\ 0 \epx.
$$	
A possible solution is $\alpha_2 = -1, \, \beta_{12}=1, \, \beta_{22} = 0.$ 

If $m=2, \,$ then we stop here. If $m m> 2, \,$ then we run Step \ref{algo-nonstrict-SDP-extend}. If we 
run this step, then it may choose 
$$
A_3 = \begin{pmatrix}
	1 & 0 & 0 & 0\\
	0 & 0 & 0 & 0\\
	0 & 0 & 0 & 0\\
	0 & 0 & 0 & 1
\end{pmatrix}.
$$
Step \ref{algo-nonstrict-SDP-set-bi} may choose $y=0.$ 
Thus, the final  output of Algorithm \ref{algo-nonstrict-SDP} is the SDP \eqref{problem-sturm-2}, with the equation 
$\la A_3, X \ra = 1$ possibly added; $X^*, Z^*$ as given in 
\eqref{problem-sturm-3}; and $Y_1, Y_2$ given in 
\eqref{eqn-Y1Y2-sturm}.

\end{Example} 

Next we prove correctness of our algorithm.
\begin{Theorem} \label{thm-alg-nonstrict-correct} The following hold:
	\begin{enumerate}
		\item 	\label{thm-alg-nonstrict-correct-1} Algorithm \ref{algo-nonstrict-SDP} can be implemented, using Algorithm \ref{algo-base-eqn} as a subroutine, so it i) succeeds in every run, and ii) its output is correct.

			\item \label{thm-alg-nonstrict-correct-2}   Suppose \eqref{p} is an SDP with $m$ constraints over $\psdn$ that  fails 
			strict complementarity with \eqref{d}. 
		Then \eqref{p} is obtained 
		as  a reformulation of some output of 	Algorithm \ref{algo-nonstrict-SDP}.
	\end{enumerate}
	\end{Theorem}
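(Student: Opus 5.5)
For part \ref{thm-alg-nonstrict-correct-1}, we check each step of Algorithm \ref{algo-nonstrict-SDP} in turn; the only step that can fail is the call to Algorithm \ref{algo-base-eqn}.

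\emph{Step \ref{algo-nonstrict-SDP-choose-Ai-Yj}.} This step always succeeds. Partition $G$ into $k$ and $\ell$ nonempty consecutive blocks, take $P_0=\{1,\dots,s\}$ and $Q_0=\{n-r+1,\dots,n\}$, and put identity blocks on the diagonal positions of the $P_i$ (resp.\ $Q_j$).

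\emph{Step \ref{algo-base-eqn} (main obstacle).} Fix iteration $i$. By \eqref{eqn-P0-all_Qj}, $P_{i-1}\cap Q_0=\emptyset$, so the $(P_{i-1},Q_0)$ block is an off-diagonal block. In $A_i$ it lies in the arbitrary ``$\times$'' region, since $P_{i-1}$ precedes $P_i$ and $Q_0$ is disjoint from $P_{0:k}$. In each $Y_j$ it also lies in a ``$\times$'' region, because $Q_0$ is the first block of $(X^*,Y_1,\dots,Y_\ell)$.

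I would set these blocks with rank-one choices. The idea is that the first block index $P_{i-1}$ sits inside some $Q_{q}$ with $q\ge 1$, or in $P_0$, and we can exploit this to decouple the equations. Writing $\la A_i,Y_j\ra=c_{ij}+2\la \alpha_i,\beta_{ij}\ra$, where $c_{ij}$ collects all contributions from other entries, the system for iteration $i$ is linear in $\beta_{1i'},\dots,\beta_{\ell i'}$ once $\alpha_i$ is fixed nonzero. Concretely, choose $\alpha_i=e$ (a fixed nonzero matrix) and solve $\la e,\beta_{ij}\ra=-c_{ij}/2$ for each $j$ independently; this is always solvable since $e\neq 0$.

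The real issue is whether the $(P_{i-1},Q_0)$ blocks of $Y_j$ chosen in iteration $i$ are disturbed by later iterations, and whether they change the $c_{i'j}$ of earlier iterations. Since the $P_i$ are disjoint, the blocks $(P_{i-1},Q_0)$ for different $i$ are disjoint. Moreover, $A_{i'}$ with $i'<i$ is zero on rows $P_{i-1}\times Q_0$, because those lie beyond $P_{1:i'}$ and outside the $\times$ region. Hence earlier inner products are unaffected, and the loop works sequentially. I must still verify this vanishing carefully from the block form \eqref{eqn-regfr}; this bookkeeping is the main obstacle.

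\emph{Remaining steps.} Step \ref{algo-nonstrict-SDP-extend} is a linear condition with many solutions. Step \ref{algo-nonstrict-SDP-set-bi} makes $X^*$ primal feasible and $(y,Z^*)$ dual feasible with $\la X^*,Z^*\ra=0$, so both are optimal. Moreover, $\A Y_j=0$ by (BASE) and Step \ref{algo-nonstrict-SDP-extend}, and $\la C,Y_j\ra=\la Z^*,Y_j\ra=0$ since $Z^*$ is supported on $P_0$, which is disjoint from every $Q_j$ block support. Here I would check that the $Y_j$ vanish on $P_0\times P_0$, using $P_0=[n]\setminus Q_{0:\ell}$. Taking the trivial reformulation, all conclusions of Theorem \ref{thm-main} hold, so by its easy direction $X^*,Z^*$ are maximum rank optimal solutions and strict complementarity fails.

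For part \ref{thm-alg-nonstrict-correct-2}, take maximum rank optimal $X,Z$ with $\la X,Z\ra=0$. An orthogonal rotation diagonalizes them simultaneously into the form \eqref{eqn-Xstar-Zstar}, with $r+s<n$. Then apply the $\Rightarrow$ direction of Theorem \ref{thm-main} to get \eqref{p-prime} and the $Y_j$. After a permutation, the first $k$ constraints and the $Y_j$ are valid choices in Steps \ref{algo-nonstrict-SDP-choose-X*-Z*}--\ref{algo-nonstrict-SDP-choose-Ai-Yj}. They already satisfy (BASE), so the Base Equations Algorithm may leave them unchanged; this is permitted since any solution may be chosen. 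The remaining $A'_i$ have zero inner product with the $Y_j$. Finally, $b'_i=\la A'_i,X^*\ra$ by feasibility, and $C'=Z^*+\A'^*y$ by dual feasibility. Since reformulations are invertible, \eqref{p} is a reformulation of this output.
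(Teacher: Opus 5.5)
Your proposal is correct and follows essentially the same route as the paper. You fill in the $(P_{i-1},Q_0)$ blocks sequentially, using $A_t(P_{i-1},Q_0)=0$ for $t<i$ so that earlier (BASE) equations are never disturbed, and you get correctness from the easy direction of Theorem~\ref{thm-main}; for part~2 you rotate a maximum rank optimal pair into the form \eqref{eqn-Xstar-Zstar}, apply the $\Rightarrow$ direction, and note that the resulting data are legitimate choices at every step. The differences are minor: you use one orthogonal simultaneous diagonalization (valid since $XZ=0$) instead of the paper's two rotations, and you explicitly check $\la C, Y_j\ra = 0$ via $Y_j(P_0)=0$. Also, your fixed nonzero choice of $A_i(P_{i-1},Q_0)$ cannot by itself produce that block equal to zero, so part~2 rests, as you say, on Algorithm~\ref{algo-base-eqn} being allowed to return any solution; the paper makes this explicit by parametrizing all solutions of $Mx=d$.
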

{\bf Proof} We first prove \eqref{thm-alg-nonstrict-correct-1}.
Suppose Algorithm \ref{algo-nonstrict-SDP} in Steps \ref{algo-nonstrict-SDP-choose-X*-Z*} and \ref{algo-nonstrict-SDP-choose-Ai-Yj} chooses $(Z^*, A_1, \dots, A_k)$ and 
$(X^*, Y_1,  \dots, Y_\ell), $ with structure 
 $(P_0, \dots, P_k)$ and 
$(Q_0, \dots, Q_\ell), \, $ respectively.

We prove that Step \ref{step-adjust-Ai} in  Algorithm \ref{algo-base-eqn} can be carried out for all $i.$ For that, let us fix $i \in \{1, \dots, k \}.$ 
 We first set  
the $(P_{i-1}, Q_0)$ block of $A_i$ and of all $Y_j$ to zero and define 
$$
d := - \tfrac{1}{2} \bigl( \la A_i, Y_1 \ra, \dots, \la A_i, Y_\ell \ra \bigr)^\top. 
$$
To compute the $(P_{i-1}, Q_0)$ block of $A_i$ and all $Y_j$ we solve the system 
\begin{equation} \label{eqn-vecYi-1-Q0} 
\bpx \myvec{Y_1(P_{i-1}, Q_0)}^\top \\
  \vdots \\
   \myvec{Y_\ell(P_{i-1}, Q_0)}^\top \epx \bpx \myvec{A_i(P_{i-1}, Q_0)}  \epx = d.
\end{equation} 
To see how to do this, let $M$ denote the constraint matrix on the left hand side, and let 
$x$ denote 
$\myvec{A_i(P_{i-1}, Q_0)}.$ We solve the system $M x = d$ in an unusual setup:
only the right hand side is fixed, and we solve for both  $M$ and 
$x.$  

By elementary linear algebra  all $M$ and $x$ that satisfy $Mx =d$ arise as possible outputs
 of the following algorithm: 
\begin{enumerate}
	\item If $d=0$, let $M$ be arbitrary and choose $x$ such that $Mx=d$.
	
	\item If $d\neq 0$, let $M=[M',d]$, where $M'$ is arbitrary, and
	$x=(0,\dots,0,1)^\top$.  
	\item Choose an invertible matrix $T$ and set $M:=MT$, $x:=T^{-1}x$.
\end{enumerate}

Next we claim that after iteration $i$ of Algorithm~\ref{algo-base-eqn} the previously satisfied equations
$$
\la A_t, Y_j \ra = 0 \; \text{for} \; j=1, \dots, \ell
$$
remain true for all $t < i.$
Indeed,  these equations are not affected by adjusting the $(P_{i-1}, Q_0)$ block of $A_i.$ 
We claim that they are also not affected by adjusting the same block of $Y_j$ for any 
$j \in \{1, \dots, \ell \}. $ To see why, we use that 
\begin{equation} \label{eqn-AtPi-1-Q0} 
	A_t(P_{i-1}, Q_0) = 0 \; \text{for all} \; t < i;
\end{equation}
and \eqref{eqn-AtPi-1-Q0} follows from Definition \ref{definition-regfr}.

Next we prove that Algorithm \ref{algo-nonstrict-SDP} correctly outputs an 
SDP of the form \eqref{p} that fails strict complementarity with \eqref{d}.
Indeed, $X^*$ is feasible in \eqref{p}, and $Z^*$ in \eqref{d}, and they have zero inner product. Thus they are optimal in \eqref{p} and \eqref{d}, respectively. 
Also, the proof of the $\Leftarrow$ direction in Theorem \ref{thm-main} shows that both $X^*$ and $Z^*$ are 
maximum rank solutions in \eqref{p} and \eqref{d}, respectively. This implies our claim.

We next  prove \eqref{thm-alg-nonstrict-correct-2}. For that, suppose \eqref{p} is an 
 SDP that fails  strict complementarity with \eqref{d}. 
  Let $X^*$ be a maximum rank optimal solution in \eqref{p} and $Z^*$ be such a solution in \eqref{d}. We claim that after a suitable rotation we may  assume 
that $X^*$ and $Z^*$ have the form as stated in \eqref{eqn-Xstar-Zstar}. Indeed, one rotation, say by $T_1$  brings $X^*$ into this form. To keep $X^*$ feasible, and keep its zero 
inner product with $Z^*, \,$ we also rotate all $A_i, \, C, \, $ and $Z^*$ by $T_1^{- \top}.$ 
Afterwards,  $\la X^*, Z^* \ra = 0$ implies that the last $r$ rows and columns of $Z^*$ are zero. Then we rotate $Z^*$ by a matrix of the form 
$T_2 := \left( \begin{smallmatrix*}
	U & 0 \\ 0 & I_r  \end{smallmatrix*} \right)$ 
 and also all $A_i$ and $C$ by $T_2.$ 
 This finishes he proof of our claim.

After we made sure that $X^*$ and $Z^*$ are in the right form, we still denote the data 
of \eqref{p} by $A_i$ for $i=1, \dots, m$  and by $b$ and $ C.$  
Then \eqref{p} has a reformulation as described in Theorem \ref{thm-main}. We next claim
that  
$A_1, \dots, A_k$ and $Y_1, \dots, Y_\ell$ are possible outputs of Steps 
\ref{algo-nonstrict-SDP-choose-Ai-Yj}--\ref{algo-nonstrict-SDP-adjust-Ai-Yj} of Algorithm \ref{algo-nonstrict-SDP}. Indeed, 
the blocks outside $(P_{i-1}, Q_0)$ for $i=1, \dots, k$ 
can be  chosen directly, while  the 
$(P_{i-1}, Q_0)$ blocks can be  chosen by solving \eqref{eqn-vecYi-1-Q0} for $i=1, \dots, k.$ 

The other $A_i$ 
are possible outputs of Step \ref{algo-nonstrict-SDP-extend}; and $b$ and $C$ are possible outputs of Step \ref{algo-nonstrict-SDP-set-bi}. 
\qed 

The next two remarks explain details of Algorithms \ref{algo-nonstrict-SDP}
and \ref{algo-base-eqn}.

First, Algorithm \ref{algo-base-eqn} solves the 
nonlinear system of \eqref{eqn-base} equations by a direct algorithm that does not require  backtracking. Indeed, we claim 
that this algorithm can be 
interpreted as solving a triangular system of equations. To prove that, we  first define the  index set 
	$R \subseteq [n] \times [n]$ as 
	$$
	R :=  (P_{1:k-1}, Q_0)  \cup  (Q_0, P_{1:k-1}) 
	$$ 
	Also, for a matrix $M \in \rad{n \times n}$ we write $\bar{M}(R)$ the matrix obtained by 
	setting all elements {\em not} in $R$ to zero. 
	
	Let us fix $j \in \{1, \dots, \ell \}.$ 
	For this $j$ we claim that 
	the \eqref{eqn-base} equations  can be written as the triangular system of equations: 
	\begin{small} 
		\begin{equation}
			\begin{aligned}
				\langle A_1(P_0,Q_0), Y_j(P_0,Q_0) \rangle
				&\phantom{{}+ \langle A_2(P_1,Q_0), Y_j(P_1,Q_0) \rangle}
				&= -  \langle \bar{A_1}(R), \bar{Y_j}(R) \rangle, \\
				\langle A_2(P_0,Q_0), Y_j(P_0,Q_0) \rangle
				&+ \langle A_2(P_1,Q_0), Y_j(P_1,Q_0) \rangle
				&= - \langle \bar{A_2}(R), \bar{Y_j}(R) \rangle, \\
				\vdots \\
				\langle A_k(P_0,Q_0), Y_j(P_0,Q_0) \rangle
				&+ \langle A_k(P_1,Q_0), Y_j(P_1,Q_0) \rangle
				+ \cdots + \langle A_k(P_{k-1},Q_0), Y_j(P_{k-1},Q_0) \rangle
				\!\!\!\!\!\!\!	&= - \langle \bar{A_k}(R), \bar{Y_j}(R) \rangle.
			\end{aligned}
		\end{equation}
	\end{small} 
	Indeed, our claim  follows, since for all $i \in \{1, \dots, k \}$ the relations in  
	\eqref{eqn-AtPi-1-Q0} hold. 	
	Thus, the $i$th step of Algorithm \ref{algo-base-eqn} computes the diagonal block 
	$A_i(P_{i-1}, Q_0)$ and 	$Y_j(P_{i-1}, Q_0)$ for all $j.$

 We next comment on Step \ref{algo-nonstrict-SDP-extend} of Algorithm \ref{algo-nonstrict-SDP}. 
In the previous steps we have chosen  $A_1, \dots, A_k. \,$ As we discussed after the statement of Theorem \ref{thm-main}, we may assume that  
their  positive definite blocks are nonempty, so these matrices are  linearly independent.
Ideally, we would like to choose $A_{k+1}, \dots, A_m$ so that i) they are orthogonal to all the $Y_j$ and that ii) all the $A_i$ are linearly independent.

For that, define  
$$ L_1 := \{Y_1, \dots, Y_\ell \}^\perp, \, L_2 := \lin \{ A_1, \dots, A_k \}.$$ 

Suppose $A_{k+1}, \dots, A_m \in L_1.$ Since 
$L_2 \subseteq L_1, \, $ all the $A_i$ are are linearly independent if and only if 
$$
\{ A_1, \dots, A_k, A_{k+1}^\prime, \dots, A_m^\prime \} 
$$
are linearly independent. Here $A_i^\prime \in L_1 \cap L_2^\perp$  is the projection 
of $A_i$ to $L_2^\perp$ for $i=k+1, \dots, m.$ 
Thus, to find the $A_i$ for $i=k+1, \dots, m$ we  first choose linearly independent 
matrices 
$A_i^\prime \in L_1 \cap L_2^\perp;$ 
then we define the $A_i$ by adding  linear combinations of $A_1, \dots, A_k$ to the $A_i^\prime.$ 


We conclude with the 

\emph{Proof of Corollary \ref{Corollary-all-partitions}:} 
Suppose 	$(P_1, \dots, P_k)$ and $( Q_1, \dots, Q_\ell)$ satisfy the assumptions of Corollary \ref{Corollary-all-partitions}.
Also suppose Algorithm \ref{algo-nonstrict-SDP} chooses $A_1, \dots, A_k$ and $Y_1, \dots, Y_\ell$ in Step 
\ref{algo-nonstrict-SDP-choose-Ai-Yj} so with $X^*$ and $Z^*$ they satisfy conclusions
\eqref{Corollary-all-partitions-2} and \eqref{Corollary-all-partitions-3}.
Then Algorithm \ref{algo-nonstrict-SDP} will successfully construct the SDP so that 
 conclusion \eqref{Corollary-all-partitions-1} in Corollary \ref{Corollary-all-partitions} will hold.
\qed

\subsection{Guaranteeing Slater's condition}  
\label{subsection-generate-Slater-nonstrict-SDP} 

In this subsection we modify the arguments in Subsection \ref{subsection-generate-nonstrict-SDP-nonSlater} to generate SDPs of the form  \eqref{p}
 that fail strict complementarity with \eqref{d},  
 while ensuring that both \eqref{p} and \eqref{d} are strictly feasible.
 
 \co{
 The output of Algorithm \ref{algo-nonstrict-Slater-SDP} consists of the data of \eqref{p}, together with 
 optimal solutions $X^*$ and $Z^*$ of \eqref{p} and 
 \eqref{d}, respectively, having the form 
 \eqref{eqn-Xstar-Zstar}.  The output also includes $\bX$ strictly feasible in 
 \eqref{p}$,$ and $\bZ$ strictly feasible in \eqref{d}.
}
 
 \co{
 
 The output will be the data of \eqref{p}, together with  $X^*$ and $Z^*$ optimal solutions 
 in \eqref{p} and \eqref{d}, respectively of the form 
 \eqref{eqn-Xstar-Zstar}, as well as $\bX$ strictly feasible in \eqref{p} and 
 $\bZ$ strictly feasible in \eqref{d}. 
}

Suppose that \eqref{p} has the form given in Theorem \ref{thm-main},
$X^*$ is optimal in \eqref{p} and $Z^*$ in \eqref{d}. Also suppose that 
$\bX$ strictly feasible in \eqref{p} and 
$\bZ$ strictly feasible in \eqref{d}.

Then 
		\begin{equation} \label{sosickofthis} 
	\la \bX - X^*, \bZ - Z^* \ra = 0.
	\end{equation}
Also, setting $\DX := \bX - X^*, \, \DZ := \bZ - Z^*, $
	the orthogonality equations must hold: 
	\begin{equation} 	\label{eqn-orth} \tag{$\mathrm{ORTH}$} 
	\la A_i, \DX \ra = \la Y_j, \DZ \ra = \, 0 \, \text{for} \, \, i=1, \dots, k; j=1, \dots, \ell.
\end{equation} 
And we also must have 
\begin{equation} \label{eqn-DeltaZ-in-A1-Am}
	\Delta Z \in \lin \{ A_1, \dots, A_m \}.
\end{equation}

In this subsection we use the $"\Diag"$ notation as follows. For a matrix 
$M \in \rad{n \times n}$ we write $\Diag(M)$ for the
the vector in $\radn$ that contains the diagonal elements of $M.$

Our generating algorithm is stated in Algorithm \ref{algo-nonstrict-Slater-SDP}.
It is intriguing that in Step \ref{step:one}\ref{step:one:a} 
we must choose $k$ to be smaller than $m$ in order to produce an SDP with the required features.

	\begin{algorithm}
	\caption{Construct SDP that fails strict complementarity, but both primal and dual satisfy Slater's condition} 
	\label{algo-nonstrict-Slater-SDP}
	\begin{algorithmic}[1]
		\Statex{\bf Input:} Integers $n \geq 3, \;  m \geq 2.$
		\Statex{\bf Output:} 
		\begin{itemize}
			\item SDP in the normal form of \eqref{p}
			which fails strict complementarity with \eqref{d}, 
			 and $Y_1, \dots, Y_\ell$ such that
			 \begin{itemize}
			 	\item \eqref{p} and $Y_1, \dots, Y_\ell$
			 	satisfy   the conclusions of Theorem \ref{thm-main}.
			 	\item  Both \eqref{p} and \eqref{d} satisfy Slater's condition. 
			 \end{itemize} 
			\item A maximum rank  optimal solution $X^*$ of \eqref{p} and
			$Z^*$ of \eqref{d} of the form  \eqref{eqn-Xstar-Zstar}.
			\item A positive definite matrix $\bX$ feasible in \eqref{p}, and a  
			positive definite matrix $\bZ$ feasible in \eqref{d}. 
		\end{itemize}
		\State   \label{step:one}Choose 
\begin{enumerate}[label=(\alph*)]
	\item  \label{step:one:a} positive integers $r$ and $s \,$ such that $r + s < n, \, $ $ k, \ell \in \{1, \dots, n-r-s \}, $ and $k < m.$ 
	\item  \label{step:one:b} $\bX, \bZ$ positive definite matrices. 
\end{enumerate}

	\State \label{step:two}Choose $X^*$ and $Z^*$ 	of the form  \eqref{eqn-Xstar-Zstar}, 
		 where $\Lambda$ and $\Gamma$ are positive definite, and of order $r$ and  $s, \,$ respectively. Make sure they satisfy \eqref{sosickofthis}.
			Define
		\begin{equation} \nonumber 
			\Delta X := \bX - X^*, \, \Delta Z := \bZ - Z^*.
		\end{equation}
		\State \label{algo-nonstrict-Slater-SDP-choose-Ai-Yj}Choose $A_1, \dots, A_k$ and $Y_1, \dots, Y_\ell$ such that 
		\begin{itemize} 
			\item  $(Z^*, A_1, \dots, A_{k})$ is a regular facial reduction sequence which reduces $\psd{n}$ to 
			$\bpx 0 & 0 \\ 0 & \oplus_r \epx. $ 
			\item  $(X^*, Y_1, \dots, Y_{\ell})$ is a regular facial reduction sequence which reduces 
			$\psd{n}$ to 
			$\bpx \oplus_s  & 0 \\ 0 & 0 \epx. $
		\end{itemize} 
		 	\State \label{step:three}This step ensures \eqref{eqn-base} and  \eqref{eqn-orth}:
		 			 	 \begin{enumerate}[label=(\alph*)]
		 	 		 	 	\item \label{step:three-b} Use Algorithm \ref{algo-base-eqn} to make sure 
		 	 	the  \eqref{eqn-base} equations are satisfied.
		 	 		\item 	\label{step:three:a} 	 Adjust $A_i(P_0)$ for all $i$ and $Y_j(Q_0)$ for all $j$  so afterwards  the \eqref{eqn-orth} equations hold.
		 	  \end{enumerate} 
		 	 	\State  \label{algo-nonstrict-Slater-SDP-extend}Let $A_{k+1} := \Delta Z.$ Then 
choose $A_{k+2},\dots,A_m$ so  they all  have zero inner product 
with $\Delta X, Y_1,\dots,Y_{\ell}. $ 
	 	 	\State \label{algo-nonstrict-Slater-SDP-set-bi}Set $b_i:= \la A_i, X^* \ra$ for $i \in \{1, \dots, m\}, \, $  $C := Z^* + \sum_{i=1}^m y_i A_i$ for some $y \in \rad{m}.$ 
		\end{algorithmic}
	\end{algorithm}

\begin{Example} \label{example-sturm-4} (Examples \ref{example-sturm}, \ref{example-sturm-2}, \ref{example-sturm-3} continued) 
	This example shows how Algorithm \ref{algo-nonstrict-Slater-SDP} constructs a variant 
	of the SDP \eqref{problem-sturm} (also stated in \eqref{problem-sturm-2}), 
	which satisfies Slater's condition both on the primal and dual side.
	
We can see  that even the original SDP \eqref{problem-sturm} satisfies Slater's condition, but the identity $I$ is not feasible in it. However, the dual does not satisfy Slater's condition. 
	For better intuition, we next construct a variant, in which 
	$\bX=I$ is strictly feasible in the primal, and $\bZ=I$ is strictly feasible in the dual.
	
	So suppose that in step \ref{step:one}\ref{step:one:b} Algorithm
	\ref{algo-nonstrict-Slater-SDP} chooses $\bX = \bZ = I.$ 
	Thus, when   in step \ref{step:two} it chooses 
	$X^*$ and $Z^*, \,$ it chooses their nonzero element to be $2$ to make sure 
	\eqref{sosickofthis} holds. 
	Hence  after step \ref{step:two} we  have 
	\begin{equation} \label{eqn-Slater-X*-Z*} 
		X^* = \begin{pmatrix}
		0 & 0 & 0 & 0\\
		0 & 0 & 0 & 0\\
		0 & 0 & 0 & 0\\
		0 & 0 & 0 & 2
	\end{pmatrix}, \, 
		Z^* = \begin{pmatrix}
		2 & 0 & 0 & 0\\
		0 & 0 & 0 & 0\\
		0 & 0 & 0 & 0\\
		0 & 0 & 0 & 0 
	\end{pmatrix},	
	\DX = \begin{pmatrix}
		1 & 0 & 0 & 0 \\
		0 & 1 & 0 & 0 \\
		0 & 0 & 1 & 0 \\
		0 & 0 & 0 & -1
	\end{pmatrix}, \, \DZ = \begin{pmatrix}
		-1 & 0 & 0 & 0 \\
		0 & 1 & 0 & 0 \\
		0 & 0 & 1 & 0 \\
		0 & 0 & 0 & 1
	\end{pmatrix}.
\end{equation} 
	After steps \ref{algo-nonstrict-Slater-SDP-choose-Ai-Yj} and ~\ref{step:three}\ref{step:three-b} we have (just like in Algorithm \ref{algo-nonstrict-SDP})
\begin{equation}
		\begin{aligned}
		A_1 &= \begin{pmatrix}
			0 & 0 & 0 & -1 \\
			0 & 2 & 0 & 0 \\
			0 & 0 & 0 & 0 \\
			-1 & 0 & 0 & 0
		\end{pmatrix},
		\quad
		A_2 = \begin{pmatrix}
			0 & 0 & 0 & 0 \\
			0 & 0 & 0 & -1 \\
			0 & 0 & 2 & 0 \\
			0 & -1 & 0 & 0
		\end{pmatrix}, \\ \\
		Y_1 &= \begin{pmatrix}
			0 & 0 & 0 & 0  \\
			0 & 0 & 0 & 1 \\
			0 & 0 & 1 & 0 \\
			0 & 1 & 0 & 0
		\end{pmatrix},
		\quad
		Y_2 = \begin{pmatrix}
			0 & 0 & 0 & 1 \\
			0 & 1 & 0 & 0 \\
			0 & 0 & 0 & 0 \\
			1 & 0 & 0 & 0
		\end{pmatrix}.
	\end{aligned} 
	\end{equation}
	Step \ref{step:three}\ref{step:three:a} ensures the \eqref{eqn-orth} equations by 
	setting the 
	$(1,1)$ element of both $A_i$ and the $(4,4)$ element of both $Y_j$ to obtain
	\begin{equation} \label{eqn-Slater-all-Ai-Yj} 
		\begin{aligned}
			A_1 &= \begin{pmatrix}
				-2 & 0 & 0 & -1 \\
				0 & 2 & 0 & 0 \\
				0 & 0 & 0 & 0 \\
				-1 & 0 & 0 & 0
			\end{pmatrix},
			\quad
			A_2 = \begin{pmatrix}
				2 & 0 & 0 & 0 \\
				0 & 0 & 0 & -1 \\
				0 & 0 & 2 & 0 \\
				0 & -1 & 0 & 0
			\end{pmatrix}, \\ \\
			Y_1 &= \begin{pmatrix}
				0 & 0 & 0 & 0  \\
				0 & 0 & 0 & 1 \\
				0 & 0 & 1 & 0 \\
				0 & 1 & 0 & -1 
			\end{pmatrix},
			\quad
			Y_2 = \begin{pmatrix}
				0 & 0 & 0 & 1 \\
				0 & 1 & 0 & 0 \\
				0 & 0 & 0 & 0 \\
				1 & 0 & 0 & -1 
			\end{pmatrix}.
		\end{aligned} 
	\end{equation}
	Step \ref{algo-nonstrict-Slater-SDP-extend}  chooses $A_3  := \DZ.$ 
	Step \ref{algo-nonstrict-Slater-SDP-set-bi} may choose $y=0.$ 
	So the final SDP has the data given in \eqref{eqn-Slater-X*-Z*}, \eqref{eqn-Slater-all-Ai-Yj}, and 
	\begin{equation}
		\begin{aligned}
		A_3 = \begin{pmatrix}
				-1 & 0 & 0 & 0 \\
				0 & 1 & 0 & 0 \\
				0 & 0 & 1 & 0 \\
				0 & 0 & 0 & 1
			\end{pmatrix}, 
			\quad
			b = (0,0,2)^\top, C &= \begin{pmatrix}
				1 & 0 & 0 & 0 \\
				0 & 0 & 0 & 0 \\
				0 & 0 & 0 & 0 \\
				0 & 0 & 0 & 0
			\end{pmatrix}.
		\end{aligned}
	\end{equation}
\end{Example}

\begin{Theorem} \label{thm-alg-nonstrict-Slater-correct} The following hold:

\begin{enumerate}

\item 	\label{thm-alg-nonstrict-Slater-correct-1}  
Algorithm \ref{algo-nonstrict-SDP} can be implemented, using Algorithm \ref{algo-base-eqn} as a subroutine, so it i) succeeds in every run, and ii) its output is as stated. 
\co{correctly outputs an SDP in the form of \eqref{p}.
The output \eqref{p} has the following properties:  it  fails strict complementarity with \eqref{d};  and 
both \eqref{p} and \eqref{d} satisfy Slater's condition.
}

\item \label{thm-alg-nonstrict-Slater-correct-2}   Suppose \eqref{p} is an SDP with $m$ constraints over $\psdn$ which fails 
strict complementarity, but both \eqref{p} and \eqref{d} 
satisfy  Slater's condition.  
Then \eqref{p} is obtained 
as  a reformulation of some output of 	Algorithm \ref{algo-nonstrict-Slater-SDP}.

\end{enumerate}
\end{Theorem}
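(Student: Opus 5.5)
The plan is to handle the two parts separately. Part~1 (which should refer to Algorithm~\ref{algo-nonstrict-Slater-SDP}) is a step-by-step feasibility check of that algorithm. Part~2 is a converse argument built on Theorem~\ref{thm-main}, in the same style as the proof of Theorem~\ref{thm-alg-nonstrict-correct}.

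\textbf{Part 1: each step can be carried out.}
\begin{itemize}
\item \emph{Step~\ref{step:two}.} Take arbitrary $X^*,Z^*$ of the form \eqref{eqn-Xstar-Zstar} and replace them by $tX^*, tZ^*$ with $t>0$. Expanding, \eqref{sosickofthis} becomes
$$
\la \bX,\bZ\ra = t\bigl(\la \bX,Z^*\ra + \la X^*,\bZ\ra\bigr).
$$
Both sides are positive because $\bX,\bZ \succ 0$ and $X^*,Z^*\neq 0$, so a suitable $t>0$ exists.
\item \emph{Step~\ref{step:three}\ref{step:three-b}.} This works exactly as in Theorem~\ref{thm-alg-nonstrict-correct}.
\item \emph{Step~\ref{step:three}\ref{step:three:a}.} Since $X^*$ vanishes on the $P_0$ block, $\DX(P_0)=\bX(P_0)\succ 0$. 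Hence for each $i\le k$ we can modify, say, one diagonal entry of $A_i(P_0)$ to force $\la A_i,\DX\ra=0$. Symmetrically, $\DZ(Q_0)=\bZ(Q_0)\succ0$, so we can adjust $Y_j(Q_0)$ to force $\la Y_j,\DZ\ra=0$.
\item \emph{Nothing earlier is destroyed by Step~\ref{step:three}\ref{step:three:a}.}
  \begin{itemize}
  \item For $i\ge1$, the $P_0$ block of $A_i$ lies in the ``$\times$'' part of \eqref{eqn-regfr}, so the regular facial reduction structure survives. The same holds for $Y_j(Q_0)$.
  \item By \eqref{eqn-P0-all_Qj}, $P_0$ lies outside $Q_{0:\ell}$, so $Y_j(P_0)=0$. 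Likewise $A_i(Q_0)=0$ for $i\le k$. Therefore the \eqref{eqn-base} equations are untouched.
  \item $\la A_i,X^*\ra$ is unchanged because $X^*$ is supported on $Q_0$.
  \end{itemize}
\end{itemize}

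\textbf{Part 1: the output has the claimed properties.}
\begin{itemize}
\item \emph{Primal Slater point.} $\bX$ is feasible because $\la A_i,\DX\ra=0$ for every $i$:
  \begin{itemize}
  \item for $i\le k$ this is \eqref{eqn-orth};
  \item for $i=k+1$ it is \eqref{sosickofthis};
  \item for $i\ge k+2$ it holds by the choice made in Step~\ref{algo-nonstrict-Slater-SDP-extend}.
  \end{itemize}
  This step uses $k<m$, since the slot $k+1$ must exist.
\item \emph{Dual Slater point.} $\bZ = Z^*+\DZ = C-\A^*(y-e_{k+1})$, so $\bZ$ is dual feasible and positive definite.
\item \emph{The certificates $Y_j$ remain valid.} We need $\A Y_j=0$ and $\la C,Y_j\ra=0$. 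The only new fact required is $\la \DZ,Y_j\ra=0$, which is \eqref{eqn-orth}.
\item \emph{Failure of strict complementarity.} This now follows from the $\Leftarrow$ direction of Theorem~\ref{thm-main}, with no rotation needed.
\item \emph{Linear independence of $A_{k+1}=\DZ$ from $A_1,\dots,A_k$.} Suppose $\DZ\in\lin\{A_1,\dots,A_k\}$. Then $\bZ(Q_0)=Z^*(Q_0)+\DZ(Q_0)=0$, because both $Z^*$ and $A_1,\dots,A_k$ vanish on $Q_0$. This contradicts $\bZ\succ0$ with $r\ge1$.
\item \emph{The remaining $A_i$.} $A_{k+2},\dots,A_m$ are then chosen independently in $\{\DX,Y_1,\dots,Y_\ell\}^\perp$, using the projection trick from the remark after Theorem~\ref{thm-alg-nonstrict-correct}.
\end{itemize}

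\textbf{Part 2.}
\begin{itemize}
\item \emph{Setup.} As in Theorem~\ref{thm-alg-nonstrict-correct}, rotate so that the maximum rank solutions $X^*,Z^*$ have the form \eqref{eqn-Xstar-Zstar}. Then apply Theorem~\ref{thm-main} to reach the normal form \eqref{p-prime} together with $Y_1,\dots,Y_\ell$. Reformulations preserve Slater points up to the rotation (Proposition~\ref{prop-reform-invariance}), so we obtain Slater points $\bX,\bZ$.
\item \emph{\eqref{sosickofthis} and \eqref{eqn-orth} hold automatically.}
  \begin{itemize}
  \item $\la A_i,\DX\ra=b_i-b_i=0$.
  \item $\DZ\in\operatorname{range}\A^*$, which gives $\la \DX,\DZ\ra=0$.
  \item $\la Y_j,\DZ\ra=0$ because $Y_j$ is orthogonal to $C$ and to all $A_i$.
  \end{itemize}
\item \emph{$k<m$ and the placement of $\DZ$.} The $Q_0$-block argument above shows $\DZ\notin\lin\{A_1,\dots,A_k\}$. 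Since $\DZ\in\lin\{A_1,\dots,A_m\}$, this forces $k<m$. An elementary row operation then replaces some $A_i$ with $i>k$ by $\DZ$, and an exchange puts it in position $k+1$. The right-hand side changes consistently because $\la \DZ, X^*\ra$ is the corresponding combination of the $b_i$.
\item \emph{Matching the algorithm's outputs.*}
  \begin{itemize}
  \item The remaining $A_i$, $i\ge k+2$, are orthogonal to $\DX$ and to the $Y_j$, so they are legitimate outputs of Step~\ref{algo-nonstrict-Slater-SDP-extend}.
  \item The $(P_{i-1},Q_0)$ blocks arise from Algorithm~\ref{algo-base-eqn}, as in Theorem~\ref{thm-alg-nonstrict-correct}.
  \item The entries $A_i(P_0)$ and $Y_j(Q_0)$ arise from Step~\ref{step:three}\ref{step:three:a}.
  \item $b$ and $C=Z^*+\A^*y^*$ arise from Step~\ref{algo-nonstrict-Slater-SDP-set-bi}.
  \end{itemize}
\end{itemize}

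\textbf{Main obstacle.} The hard part is the bookkeeping in Step~\ref{step:three}\ref{step:three:a} and its converse. We must show that adjusting the $P_0$ and $Q_0$ blocks never breaks \eqref{eqn-base} or the facial reduction structure, and that in the converse direction the given blocks $A_i(P_0)$ and $Y_j(Q_0)$ are really reachable by Step~\ref{step:three}\ref{step:three:a}. I would handle this by treating $A_i(P_0)$ and $Y_j(Q_0)$ as free parameters constrained only by one linear equation each, which the given data satisfy.
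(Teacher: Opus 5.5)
Your proposal is correct and follows the paper's proof step for step. It checks each step of the Slater-guaranteeing algorithm in the same way, using $\DX(P_0)=\bX(P_0)\succ 0$, $\DZ(Q_0)=\bZ(Q_0)\succ 0$, $Y_j(P_0)=0$ and $A_i(Q_0)=0$ so that the diagonal adjustments preserve the base equations. It uses the same Slater points $\bX$ and $\bZ=C-\A^*(y-e_{k+1})$. It proves the converse the same way, via Theorem~\ref{thm-main}, with $\DZ\notin\lin\{A_1,\dots,A_k\}$ forcing $k<m$ and a row operation placing $\DZ$ in position $k+1$.

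The differences are minor. You choose $X^*,Z^*$ by scaling an arbitrary pair rather than fixing diagonal entries one at a time; both methods reach every admissible pair. You verify the orthogonality equations and the validity of the $Y_j$ explicitly, where the paper leaves them implicit. You run the independence argument on the block $Q_0=[n]\setminus P_{0:k}$, which is the block that actually works: the paper's $[n]\setminus P_{1:k}$ also contains $P_0$, where $A_i$ need not vanish and $\DZ$ need not be positive definite.
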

{\bf Proof} We first prove \eqref{thm-alg-nonstrict-Slater-correct-1}. 
We will need some other subroutines besides Algorithm \ref{algo-base-eqn}. Since these are quite simple, we just describe  them embedded in the proof.

We show how to implement Step  \ref{step:two} so \eqref{sosickofthis}  
holds. Since we want  $X^* $ and $Z^*$ to be in the form given in 
\eqref{eqn-Xstar-Zstar}, \eqref{sosickofthis} 
amounts to 
\begin{equation} \label{eqn-Xstar-Xbar-Zstar-Zbar-3} 
	\la \bZ, X^* \ra + \la \bX, Z^* \ra  =  \la \bX, \bZ \ra.
\end{equation}
In this equation the right hand side is positive. 
So, to satisfy it, we i) sequentially set  the first $r+s-1$ nonzero diagonal elements of 
$X^*$ and $Z^*$ so the right hand side is not exceeded, then ii) 
set the $(r+s)$th element, so the left and right hand sides are equal. 
This algorithm produces any possible $X^*$ and $Z^*$ which satisfy 
\eqref{eqn-Xstar-Xbar-Zstar-Zbar-3}. 

Steps   \ref{algo-nonstrict-Slater-SDP-choose-Ai-Yj} 
and \ref{step:three}\ref{step:three-b} are carried out as in Algorithm 
\ref{algo-nonstrict-SDP}. 

Next we show how to carry out Step \ref{step:three}\ref{step:three:a}. Let $(P_0, P_1, \dots, P_k)$ the 
structure of $(Z^*, A_1, \dots, A_k).$ 
Let us fix $i \in \{1, \dots, k \}.$ We will show how to ensure 
\begin{equation} \label{eqn-AiDeltaX=0} 
	\la A_i, \Delta X \ra = 0. 
\end{equation} 
For that, we observe that i) since all nonzero entries  of $X^*$ are in 
$[n] \setminus P_{0:k}, \, $ 
and $\bX$ is positive definite, the diagonal elements of $\Delta X(P_{0})$ are positive; and 
ii) the 
diagonal of $A_i(P_{0})$ can be nonzero.

Thus, to ensure \eqref{eqn-AiDeltaX=0}, we first set  
the diagonal of $A_i(P_{0})$  to zero and let 
$$
d := -  \la A_i, \Delta X \ra.   
$$
Then we solve  the equation
\begin{equation} \label{seinfeld} 
\Diag(\Delta X(P_{0}))^\top y = d,	
\end{equation}
for $y$ and set the diagonal of $A_i(P_{0})$ to $y.$ 
We solve \eqref{seinfeld} as follows: we first choose  the first $|P_{0}|-1$ 
elements of $y$ arbitrarily, then we choose the last element  to satisfy 
\eqref{seinfeld}. Since $Y_j(P_0)=0$ for all $j \in \{1, \dots, \ell \}, \,$ 
this operation keeps the \eqref{eqn-base} equations satisfied.
 We similarly enforce  $\la Y_j, \DZ \ra = 0$ for all $j$ by setting the 
diagonal elements of $Y_j(Q_0).$  This  last operation also leaves the 
\eqref{eqn-base} equations satisfied, since $A_i(Q_0)=0$ for all $i.$ 

Carrying out Step \ref{algo-nonstrict-Slater-SDP-extend} is straightforward. 

Suppose Algorithm \ref{algo-nonstrict-Slater-SDP} constructed \eqref{p}.
The proof that \eqref{p} fails strict complementarity with its dual is essentially the same as the proof for Algorithm \ref{algo-nonstrict-SDP} in Theorem \ref{thm-alg-nonstrict-Slater-correct}. 
Since the $A_i$ all have zero inner product with 
$\Delta X, \,$ by the choice of the $b_i$ in Step 
\ref{algo-nonstrict-Slater-SDP-set-bi} we deduce that $\bX$ is feasible in \eqref{p}.
Further, since \eqref{eqn-DeltaZ-in-A1-Am} holds, by the choice of 
$C$ in Step 
\ref{algo-nonstrict-Slater-SDP-set-bi} we deduce that $\bZ$ is feasible in \eqref{d}.

We next prove \eqref{thm-alg-nonstrict-Slater-correct-2}.   
For that, suppose \eqref{p} is an 
SDP that lacks strict complementarity, but both \eqref{p} and \eqref{d} satisfy Slater's condition.
Let $X^*$ be a maximum rank optimal solution in \eqref{p} and $Z^*$ be such a solution in \eqref{d}. Also suppose  $\bX$ is strictly feasible in 
\eqref{p} and $\bZ$ in \eqref{d}. Just like in the proof of Theorem \ref{thm-alg-nonstrict-correct},
we rotate $X^*, \, Z^*, \,$ and \eqref{p} to ensure that $X^*$ and $Z^*$ are in the form 
\eqref{eqn-Xstar-Zstar}. These rotations keep $\bX$ and $\bZ$ positive definite.
Thus, given how we satisfied 
\eqref{sosickofthis}, the matrices $\bX, \, \bZ, X^*, \, Z^*$ are possible outputs of Steps 
\ref{step:one} and \ref{step:two}.

Thus \eqref{p} has a reformulation as described in Theorem \ref{thm-main}, with the accompanying
$Y_1, \dots, Y_\ell.$ Let 
$(P_0, P_1, \dots, P_k)$ be the structure of $(Z^*, A_1, \dots, A_k)$ and 
$(Q_0, Q_1, \dots, Q_\ell)$ be the structure of $(X^*, Y_1, \dots, Y_\ell).$ 
  
We next prove that $A_1, \dots, A_k$ and $Y_1, \dots, Y_\ell$ are  possible outputs of Steps 
\ref{algo-nonstrict-Slater-SDP-choose-Ai-Yj} and 
\ref{step:three}. We only do this for the $A_i$ since the proof for the $Y_j$ is analogous.
First, the blocks of all $A_i$ outside the diagonal of block 
 $A_i(P_{0})$ and 
$A_i(P_{i-1}, Q_0)$  can be chosen by Step  \ref{algo-nonstrict-Slater-SDP-choose-Ai-Yj}.
The blocks $A_i(P_{i-1}, Q_0)$  can be chosen by step \ref{step:three}\ref{step:three-b}.
And the diagonal elements of $A_i(P_{0})$  can be chosen  by step \ref{step:three}\ref{step:three:a}: 
for that, we  need that 
the algorithms described above can output any possible solution of the 
\eqref{eqn-orth} equations. 

To proceed, we claim 
\begin{equation} \label{eqn-DeltaZ-not-in-A1-Ak}
	\Delta Z \not \in \lin \, \{ A_1, \dots, A_k \}.
\end{equation}
Indeed, to prove \eqref{eqn-DeltaZ-not-in-A1-Ak}, define $P_{k+1} := [n] \setminus P_{1:k}.$ 
Then $\Delta Z(P_{k+1})$ is positive definite, hence it is nonzero; 
but $A_i(P_{k+1})=0$ for $i=1, \dots, k, \, $ so \eqref{eqn-DeltaZ-not-in-A1-Ak}
follows.

We then combine 
 \eqref{eqn-DeltaZ-not-in-A1-Ak} with \eqref{eqn-DeltaZ-in-A1-Am} and deduce 
 $k < m.$ We also deduce that when writing $\Delta Z$ as a linear combination of 
 all the $A_i, \,$ the coefficient of at least one of the $A_i$ for 
 $i \in \{ k+1, \dots, m \}$ is nonzero. 
 Thus, we can reformulate \eqref{p} by doing elementary row operations only on the last $m-k$ equations to ensure $A_{k+1}=\Delta Z.$ 
 
Thus, $A_{k+1}, \dots, A_m$ can be chosen by Step  \ref{algo-nonstrict-Slater-SDP-extend}; 
and $b$ and $C$ can be chosen by Step \ref{algo-nonstrict-SDP-set-bi}.
With this the proof is complete.
\qed 

As we remarked before, we typically would like to ensure that all the constructed $A_i$ are linearly independent.
This can be ensured for $A_1, \dots, A_k$ just by choosing their positive definite 
blocks to be nonempty. By \eqref{eqn-DeltaZ-not-in-A1-Ak}, and since $A_{k+1} = \Delta Z, \,$ 
also $A_1, \dots, A_k, A_{k+1}$ are linearly independent. 
Finally, we can choose $A_{k+2}, \dots, A_m$ 
so that all $A_i$ are linearly independent, as we discussed after 
the proof of Theorem \ref{thm-alg-nonstrict-Slater-correct}.

\section{The singularity degree}
\label{section-the-singularity-degree}

In this section we discuss the difficulty of the SDPs generated by Algorithms
\ref{algo-nonstrict-SDP} and \ref{algo-nonstrict-Slater-SDP}. We will use 
the concept of singularity degree introduced by Sturm \cite{Sturm:00}. Before we 
formally define it, we give some motivation. Thus, suppose 
Algorithm \ref{algo-nonstrict-SDP} generates an SDP in which $A_1, \dots, A_k$ each have just one nonzero, which is a $1, \,$ on the main diagonal. 
Intuitively it is clear that 
this SDP will not exhibit the pathology we saw in Example \ref{example-sturm}.
Also, from it we can create an SDP with just $k=1,$
 by aggregating the first $k$ equations.

To define the singularity degree, we need an  
 intermediate definition: 
\begin{Definition} \label{definition-RR-form} 
	We say that a semidefinite system 
	\begin{equation} \label{problem-Mi-X} 
		\ba{rcl} 
		\la M_i, X \ra & = & d_i \; (i=1, \dots, m) \\
		X  & \in  & \psdn
	\end{array} 
\end{equation}
is in rank revealing (RR) form, if the following hold for some $k \in \{0, \dots,  m \}$ and 
$r \in \{0, \dots,  n \}: $
\begin{enumerate}
	\item $(M_1, \dots, M_k)$ is a regular facial reduction sequence which reduces 
	$\psdn$ to 
	$$ \begin{pmatrix}
		0  & 0 \\
		0 & \oplus_r  \end{pmatrix}.$$
	\item $d_1 = \dots = d_k = 0.$
	\item there is an $X$ feasible in \eqref{problem-Mi-X} of the form 
	\begin{equation} 
		X^* = \bpx 0  & 0  \\
		0 & \Lambda  \epx,   
	\end{equation}
	where $\Lambda$ is order $r$ and positive definite.
\end{enumerate}
We also say that the first $k$ equations in \eqref{problem-Mi-X} 
certify the maximum rank of a feasible solution.
\end{Definition}

As it was shown in \cite{LiuPataki:15} 
every feasible semidefinite system can be 
reformulated  into RR form. Further, the RR form permits us to generate any semidefinite system
in which the rank of the maximum rank solution is fixed: for that we just need to create a system in RR form, then reformulate it. 

Definition \ref{definition-RR-form} naturally connects to Theorem \ref{thm-main} as follows.
Consider the feasible set of \eqref{p} with the equation $\la Z^*, X \ra = 0$ attached:
\begin{equation} \label{p-Zstar} 
\begin{array}{rcl}
	\la Z^*, X \ra & = & 0 \\
	\la A_i, X \ra & = & 0 \; \text{for} \, i=1, \dots, m  \\
	X  & \succeq & 0,
\end{array} \tag{$P_{Z^*}$} 
\end{equation} 
Theorem \ref{thm-main} shows that \eqref{p-Zstar} can be brought into RR form. Theorem \ref{thm-main} of course 
shows much 
more: it proves that after a reformulation a  regular facial sequence  certifies that $X^*$ has a maximum rank, 
{\em and} another such sequence {\em simultaneously}  certifies  that $Z^*$ has maximum rank.

The following definition of a singularity degree is slightly different from the original one in  \cite{Sturm:00}. 
But the two definitions are equivalent: Sturm's definition  in \cite{Sturm:00} did not use elementary row operations, but this makes no difference.
\begin{Definition}
The singularity degree of the feasible set of a semidefinite system  is 
the smallest $k$ so the system can be reformulated into an RR form in which the first $k$ equations certify the maximum rank of a feasible solution.
\end{Definition}
\co{Usually we will just refer to the singularity degree of \eqref{p}, rather than spelling out 
the singularity degree of its feasible set.} 
Thus, the singularity degree of a semidefinite system is zero, exactly when it satisfies Slater's condition.

The following are  known about the singularity degree of a semidefinite system. 
First, the definition 
directly implies that it does not change if we reformulate it. 
Thus, asking "what is the singularity degree of a semidefinite system?" is meaningful, even when 
it is already in RR  form. Indeed, as we discussed above, in some cases 
the  equations that certify the maximum rank in an RR form could be aggregated to just one equation.

However, most questions about the singularity degree seem difficult.
Given a semidefinite system, the complexity status of computing its singularity degree is unknown both in the Turing model and the real number model \cite{Cuckeretal:98}.
Next we look at the complexity status of the decision version of the problem, stated as
\begin{itemize}
\item[] "Given a semidefinite system, and an integer $k, \, $ is the singularity degree 
$\leq k$? 
\end{itemize}
The complexity is unknown in the Turing model, and the same is true when we replace $\leq$ by $\geq, $ or by $=.$ These problems may be NP-hard, or may not even be in NP.

In the real number model
we claim that the $\leq$ problem is in NP. Indeed, a "yes" certificate is 
a sequence of elementary row operations and a rotation that brings our system into RR form, in which the number of equations that certify the maximum rank is at most $k.$ However, even in this model, the complexity of the $\geq$ or $=$ problems are unknown.

To obtain good test problems it is important to generate SDPs 
in which the singularity degree of 
\eqref{p-Zstar} is large. Recall that Algorithms \ref{algo-nonstrict-SDP} and 
\ref{algo-nonstrict-Slater-SDP} first generate a minimal SDP containing the equations that certify the maximum rank of $X^*, \,$ then generate the other equations. 

Theorem \ref{thm-singularity-degree}  below characterizes when the singularity degree of a system is the largest possible, i.e., when it equals the number of equations.
For that, we need one more definition.

\begin{Definition}
Suppose $(Y_1, \dots, Y_t)$ is a regular facial reduction sequence in which all $Y_i$ are in $\symn$ with structure 
$(P_1, \dots, P_t)$  and set 
$$
P_{t+1} := [n] \setminus P_{1:t}.
$$
The critical block in  $Y_i$ for $i=2, \dots, t$ is  \footnote{Recall that 
	$Y_i(P_{i-1}, P_{i:t+1})$ stands for $Y_i(P_{i-1}, P_i \cup \dots P_{t+1}).$ }.
$$
Y_i(P_{i-1}, P_{i:t+1}) \;  
	$$
\end{Definition}

\begin{Example}
	(Example \ref{example-sturm-2} continued) 
	Consider the feasible set of this problem with the 
	equation  $\la Z^*, X \ra = 0$ attached: 
	$$
	\begin{array}{rcl}
		\la Z^*, X \ra & = & 0 \\
		\la A_i, X \ra & = & 0 \; (i=1,2) \\
		X  & \succeq & 0. 
	\end{array} 
	$$
	This system is in RR form and $(Z^*, A_1, A_2)$ is a regular facial reduction sequence, 
	with structure 
	$$
	(P_0, P_1, P_2)  =  ( \{ 1 \}, \,  \{ 2 \}, \, \{ 3 \}).
	$$
	The entries in the critical blocks in $A_1$ and $A_2$ are circled:
	\[
Z^* =
\begin{pmatrix}
	1 & 0 & 0 & 0 \\
	0 & 1 & 0 & 0 \\
	0 & 0 & 1 & 0 \\
	0 & 0 & 0 & 1
\end{pmatrix},
\qquad
A_1 =
\begin{pmatrix}
	0 & 0 & \circledentry{0} & \circledentry{-1} \\
	0 & 2 & 0 & 0 \\
	\circledentry{0} & 0 & 0 & 0 \\
	\circledentry{-1} & 0 & 0 & 0
\end{pmatrix},
\qquad
A_2 =
\begin{pmatrix}
	0 & 0 & 0 & 0 \\
	0 & 0 & 0 & \circledentry{-1} \\
	0 & 0 & 2 & 0 \\
	0 & \circledentry{-1} & 0 & 0
\end{pmatrix}.
\]

\end{Example}

\begin{Theorem} \label{thm-singularity-degree} 
Supose the semidefinite system \eqref{problem-Mi-X} is in RR form.
Then its singularity degree is $m$ $\Leftrightarrow$ the following hold:
\begin{enumerate}
	\item \label{thm-singularity-degree-1} All of the $m$ equations certify the maximum rank in it. In particular, 
	$d=0.$
	\item \label{thm-singularity-degree-2} All the critical blocks in 
	$M_2, \dots, M_m$ are nonzero.
\end{enumerate}
\end{Theorem}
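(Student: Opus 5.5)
\textbf{Direction $\Rightarrow$.} Suppose the singularity degree is $m$. Since the system is in RR form with some $k \le m$ certifying equations, the definition gives singularity degree $\le k$. Hence $k=m$ and $d=0$, which is item~\ref{thm-singularity-degree-1}.

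For item~\ref{thm-singularity-degree-2} I argue by contradiction. Suppose the critical block $M_i(P_{i-1},P_{i:m+1})$ is zero for some $i \ge 2$, and let $\Lambda_{i-1}$, $\Lambda_i$ be the positive definite diagonal blocks of $M_{i-1}$ on $P_{i-1}$ and of $M_i$ on $P_i$ (as in \eqref{eqn-regfr}). Replace $M_{i-1}$ by $M_{i-1}+\lambda M_i$ for a small $\lambda>0$; this is an elementary row operation and keeps $d_{i-1}=0$. On the index set $P_{i-1}\cup P_i$ the new matrix is block diagonal with blocks $\Lambda_{i-1}+\lambda M_i(P_{i-1})$ and $\lambda\Lambda_i$. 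The off-diagonal block vanishes because it lies in the critical block. The new matrix is also zero on rows $P_{i-1}\cup P_i$ against columns $P_{i+1:m+1}$: for $M_{i-1}$ by regularity, and for $M_i$ by the zero critical block.

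For small $\lambda$ the first block is positive definite. I then rotate by a block-diagonal orthogonal matrix acting only on the $P_{i-1}$ coordinates to diagonalize it. This rotation preserves the regular facial reduction form of every other $M_j$: for $j<i-1$ these rows and columns are zero, and for $j>i-1$ the affected blocks are ``$\times$'' blocks.

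The modified sequence $(M_1,\dots,M_{i-2},\,M_{i-1}+\lambda M_i,\,M_{i+1},\dots,M_m)$ is then a regular facial reduction sequence with structure $(P_1,\dots,P_{i-1}\cup P_i,\dots,P_m)$. It reduces $\psdn$ to the same face. Moving the $i$th equation to the end as a non-certifying equation gives an RR form with $m-1$ certifying equations. So the singularity degree is at most $m-1$, a contradiction.

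\textbf{Direction $\Leftarrow$.} Here I must show that no reformulation yields an RR form with fewer than $m$ certifying equations. Let $F_j := \psdn\cap M_1^\perp\cap\dots\cap M_j^\perp$, the matrices supported on $P_{j+1:m+1}$. Any RR form of a reformulation gives a regular facial reduction sequence $(\A^*y^{(1)},\dots,\A^*y^{(t)})$ of elements of $\lin\{M_1,\dots,M_m\}$ reaching $F_m$. Rotations do not matter, since the singularity degree is invariant under reformulation.

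I plan to prove by induction on $j$ that the face reached after $j$ steps of any such sequence contains $F_j$. Then $t\ge m$ follows, since $F_{m-1}\supsetneq F_m$ because $P_m\neq\emptyset$.

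The inductive step must show the following. If $Y=\sum_i y_iM_i$ is psd on $F_j$, with $F_j$ viewed as $\psd{}$ over the coordinates $P_{j+1:m+1}$, then $y_i=0$ for all $i>j+1$. In that case $Y^\perp\cap F_j\supseteq F_{j+1}$.

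To prove this, take the largest $i$ with $y_i\neq 0$ and suppose $i\ge j+2$. The principal submatrix of $Y$ on $P_{i:m+1}$ comes only from $y_iM_i$, so it equals $y_i\Lambda_i\oplus 0$; this forces $y_i>0$. Psd-ness then forces the $(P_{i-1},P_{i+1:m+1})$ part of $Y$ to vanish. That part equals $y_iM_i(P_{i-1},P_{i+1:m+1})$, since $M_{i-1}$ contributes nothing there.

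\textbf{Main obstacle.} This argument only kills the part of the critical block against $P_{i+1:m+1}$. The part against $P_i$ is not forced to vanish by a single psd condition, because the $P_{i-1}$ diagonal block of $Y$ can be made large through $y_{i-1}\Lambda_{i-1}$. I expect this to be the crux. First I would try a Sturm-type construction in the spirit of $X_\epsilon$ in \eqref{eqn-Xepsilon}. Using the nonzero critical blocks, I would build psd matrices $X$ with $\la M_1,X\ra=\dots=\la M_j,X\ra=0$ approximately, scaled with powers $\epsilon^{1/2^p}$ along $P_1,P_2,\dots$. The goal is to show that the face reached after $j$ facial reduction steps must still contain matrices with a nonzero $P_{j+1}$ block, i.e.\ a direct lower bound on the number of steps. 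If that fails, I would instead refine the induction hypothesis to track both the face and the span $\lin\{M_1,\dots,M_{j}\}$ used so far. In that version I would exploit that $(M_1,\dots,M_{i-1})$ already annihilates $P_{1:i-1}$ only at step $i-1$, so the $P_i$-interaction of the critical block cannot be absorbed earlier.
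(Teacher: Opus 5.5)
\textbf{The gap is in $\Leftarrow$, and it cannot be closed the way you set it up.} You read the critical block literally as $M_i(P_{i-1},P_{i:m+1})$, and you then look for a way to make its $(P_{i-1},P_i)$ part matter. Neither a Sturm-type construction nor a refined induction can do this, because under that reading the implication $\Leftarrow$ is false. Take $n=3$, $m=2$, $d=0$ and
\[
M_1=\begin{pmatrix}1&0&0\\0&0&0\\0&0&0\end{pmatrix},\qquad
M_2=\begin{pmatrix}0&1&0\\1&1&0\\0&0&0\end{pmatrix},
\]
with structure $(\{1\},\{2\})$, $P_3=\{3\}$, and the feasible matrix whose only nonzero entry is a $1$ in position $(3,3)$.
\begin{itemize}
\item Both equations certify the maximum rank.
\item $M_2(P_1,P_{2:3})=\begin{pmatrix}1&0\end{pmatrix}\neq 0$, so both conditions hold under the literal reading.
\item However, $2M_1+M_2$ has a positive definite leading $2\times 2$ block and zeros elsewhere.
\item One rotation acting on coordinates $\{1,2\}$ therefore produces an RR form with a single certifying equation, so the singularity degree is $1$.
\end{itemize}
Your inductive claim fails here with $j=0$ and $Y=2M_1+M_2$. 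Note that $M_2(P_1,P_3)=0$.

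\textbf{The fix, and what it costs your $\Rightarrow$ direction.} The block that governs the singularity degree is $M_i(P_{i-1},P_{i+1:m+1})$.
\begin{itemize}
\item This is what the paper's proof actually uses: it invokes $M_m(P_{m-1},P_{m+1})\neq 0$ to kill $y_m$ and then peels off $y_{m-1},\dots,y_2$. In the other direction, the blocks marked $\otimes$ in \eqref{bla} are $(P_{i-1},P_{i+1:m+1})$.
\item It is also exactly what the circled entries in the paper's example show. The printed definition with $P_{i:t+1}$ is inconsistent with both.
\end{itemize}

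With this reading, your inductive step already completes $\Leftarrow$ and no obstacle remains. The largest $i\ge j+2$ with $y_i\neq 0$ forces $y_i>0$, and psd-ness then forces $y_iM_i(P_{i-1},P_{i+1:m+1})=0$. This is the same peeling argument as the paper's. You phrase it as face containment in the original coordinates; the paper instead shows $T_{12}=0$ and deletes rows and columns.

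Your $\Rightarrow$ argument, however, must be repaired. You may now only assume $M_i(P_{i-1},P_{i+1:m+1})=0$, so $B:=M_i(P_{i-1},P_i)$ need not vanish, and your block-diagonal claim fails. Instead, argue as follows.
\begin{itemize}
\item The $(P_{i-1}\cup P_i)$ block of $M_{i-1}+\lambda M_i$ has Schur complement $\Lambda_{i-1}+\lambda\bigl(M_i(P_{i-1})-B\Lambda_i^{-1}B^\top\bigr)$ with respect to $\lambda\Lambda_i$.
\item Hence this block is positive definite for small $\lambda>0$. This is the paper's $\lambda M_{i-1}+M_i$ with $\lambda$ large.
\item Diagonalize it by a rotation acting on $P_{i-1}\cup P_i$. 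This leaves the regular forms of the other $M_j$ intact.
\end{itemize}
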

\pf{} Let $(P_1, \dots, P_m)$ be the structure of $(M_1, \dots, M_m)$ and 
let $p_i = | P_i|$ for all $i.$ To better visualize the arguments that follow, we  
assume without loss of generality that 
$P_1 = \{1, \dots, p_1\}, \, P_2 = \{p_1 + 1, \dots, p_1 + p_2 \}, \,$ and so on. Assume without loss of generality 
that $P_i $ is nonempty for $i=1, \dots, m.$ Let us also define $P_{m+1} := [n] \setminus P_{1:m}.$  

We first prove $\Leftarrow.$
Suppose \eqref{thm-singularity-degree-1} and \eqref{thm-singularity-degree-2} hold.
Also suppose  \eqref{elaine} below 
is  a reformulation  of \eqref{problem-Mi-X}, which is in RR form: 
\begin{equation} \label{elaine} 
\ba{rcl} 
\la M_i^\prime, X \ra & = & 0 \; (i=1, \dots, m). \\
X  & \in  & \psdn.
\end{array} 
\end{equation}
We assume that in \eqref{elaine} for some $k \leq m$ the first $k$ equations certify the maximum rank solution. Hence 
$(M_1^\prime, \dots, M_k^\prime)$ is a regular facial reduction sequence.
Suppose $(M_1^\prime, \dots, M_k^\prime)$ has structure $(P_1^\prime, \dots, P_k^\prime).$ 
Let $p_i^\prime = | P_i^\prime|$ for  $i \in \{1, \dots, k \}.$ 
We  
assume without loss of generality that 
$P_1^\prime = \{1, \dots, p_1^\prime\}, \, P_2 = \{p_1^\prime + 1, \dots, p_1^\prime + p_2^\prime \}, \,$ and so on. 

We will show that 
\begin{equation} \label{eqn-Pi-Piprime} 
	P_i = P_i^\prime \, \text{for } \, i=1, \dots, k,
\end{equation}
and this will imply $k=m, \,$ completing the proof.

To prove \eqref{eqn-Pi-Piprime}, let $T$ be the product of all rotation matrices used in the reformulation process.
The definition of a reformulation implies that for some 
$y \in \rad{m}, \,$ and $Z \in \psdn$  we have 
\begin{equation} \label{eqn-M1prime-Z-T} 
M_1^\prime = T^\top Z T, \, Z = \sum_{i=1}^m y_i M_i.
\end{equation}
Since $M_1^\prime$ is nonzero and psd, so is $Z.$ 

Since $Z$ is a linear combination of the $M_i$, we have $Z(P_{m+1})=0. \,$  Thus, since 
$Z \succeq 0$ we deduce $Z({P_{m+1}, [n]})  = 0 .$ Since $M_m(P_{m-1}, P_{m+1}) \neq 0, \,$ this in turn implies  
$y_m=0.$ 
Hence the diagonal block $Z(P_{m:m+1})=0$ is also zero.  
Continuing in like  fashion, we deduce  that 
$y_2 = \dots = y_m = 0, $ hence $y_1 > 0.$ 
Hence the rank of $M_1^\prime$ is $p_1, $ so we deduce $P_1 = P_1^\prime.$ 

If $m=1, \,$ then we are done. Otherwise, let us partition $T$ as 
\begin{equation} \label{eqn-T} 
	T = \bpx T_{11} & T_{12}  \\ T_{21} & T_{22} \epx,
\end{equation}
where $T_{11} \in \sym{p_1}$ and $T_{22} \in \sym{n-p_1}.$ 
We claim that $T_{12}=0.$ Indeed, 
let $\Lambda$ be the upper left order $p_1$ minor of $M_1.$ 
Then 
$$
T^\top M_1 T = \begin{pmatrix}
	T_{11}^\top \Lambda T_{11} & 	T_{11}^\top \Lambda T_{12} \\
		T_{12}^\top \Lambda T_{11} & 	T_{12}^\top \Lambda T_{12} 
\end{pmatrix}.
$$
Since $	T_{11}^\top \Lambda T_{11}$ is invertible, so is $T_{11}.$ Since $T_{11}^\top \Lambda T_{12}=0, \,$ we deduce $T_{12}=0.$ 

For later use we record a basic fact. Suppose a matrix, say $M \in \symn$ is partitioned as 
$$
M = \bpx A & B \\ B^\top & C \end{pmatrix}
$$
where $A \in \sym{p_1}, \, C \in \sym{n-p_1}.$ Then by a straightforward calculation the lower right 
order $n - p_1$ minor of $T^\top M T$ is $T_{22}^\top C T_{22}.$ 

Let us delete the first $p_1$ rows and columns from all $M_i$ and all $M_i^\prime$ and 
for simplicity, still call the resulting matrices $M_i$ and $M_i^\prime.$ Then 
 $(M_1, \dots, M_m)$ and  $(M_1^\prime, \dots, M_k^\prime)$ are still regular facial reduction sequences.
 (The first element of both these sequences is zero, but that does not matter). By the preceding argument,
  \eqref{problem-Mi-X} (with the reduced $M_i$) can be reformulated into 
  system  \eqref{elaine} (with the reduced $M_i^\prime$), and the product of all rotation matrices in the reformulation process can be chosen to be $T_{22}.$ 
 
 Thus we proceed in a similar manner to arrive at \eqref{eqn-Pi-Piprime}.

\co{

we know that for some $Z \in \symn$ and $y \in \rad{m}$ we have 
$$
M_2^\prime = T^\top Z T, \, Z = \sum_{i=1}^m y_i M_i.
$$
(This $Z$ and $y$ is different from the one in \eqref{eqn-M1prime-Z-T}, we just use the same 
symbols to save on notation. In particular, now $Z$ does not have to be psd.)
Partition
\begin{equation} \label{eqn-Z} 
Z = \bpx Z_{11} & Z_{12}  \\ Z_{12}^\top & Z_{22} \epx,
\end{equation}
where $Z_{11} \in \sym{p_1}, \, Z_{22} \in \sym{n-p_1}.$  Since $T_{12}=0, $ 
the lower right order $n - p_1$ corner of $M_2^\prime$ equals
$T_{22}^\top Z_{22} T_{22}.$ So $Z_{22} \succeq 0$ and an analogous argument shows 
$Z = y_1 M_1 + y_2 M_2$ for some $y_2 > 0$ and $P_2 = P_2^\prime.$  

Continuing, we deduce that $P_i = P_i^\prime$ for $i=1, \dots, k$ 
This completes  the proof.
}

Next we prove $\Rightarrow.$ Assume the statement on the left hand side.
In this proof we use ideas from \cite{pataki2024exponential}. 
Let us assume that the singularity degree of \eqref{problem-Mi-X} is $m.$ 
Since the singularity degree of any SDP with $m$ equations is at most $m,$ 
item \eqref{thm-singularity-degree-1} follows directly. 
To complete the proof we will show that all critical blocks in the $M_i$ are nonzero.
We argue by contradiction, so suppose $i \in \{2, \dots, m\}$ and 
$M_i(P_{i-1}, P_{i+1:m+1}) \neq 0.$ 

To help with  the proof, we show $M_{i}$ and $M_{i-1}$ in equation \eqref{bla}.
The empty blocks are zero, and the $\ti$ blocks are arbitrary. We mark by 
$\otimes$ the critical block of $M_i.$ 
We will prove that this  block is nonzero. 

\vspace{.1cm}
\beq \label{bla}    
M_{i-1} \, = \, \begin{pmatrix}[c|c|c|c]
\bovermat{$P_{1:(i-2)}$}{\mbox{$\,\,\,\, \times \,\,\,\,$}} 	& \bovermat{$P_{i-1}$}{\mbox{$\,\, \times\,\,$}}	& \bovermat{$ P_{i}$}{\mbox{$\,\,\times\,\,$}}	& \bovermat{$  P_{(i+1):(m+1)}$}{\mbox{$\,\,\,\,\,\,\,\,\,\,\,\times\,\,\,\,\,\,\,\,\,\,\,\,$}}	\\ \hline 
\ti & + &  &    \\ \hline
\ti &  & &    \\ \hline
\ti & \pha{0} &  &  
\end{pmatrix}, \,  
M_{i}  \, = \, \begin{pmatrix}[c|c|c|c]
\bovermat{$P_{1:(i-2)}$}{\mbox{$\,\,\,\, \times \,\,\,\,$}} 	& \bovermat{$P_{i-1}$}{\mbox{$\,\, \times\,\,$}}	& \bovermat{$ P_{i}$}{\mbox{$\,\,\times\,\,$}}	& \bovermat{$P_{(i+1):(m+1)}$ }{\mbox{$\,\,\,\,\,\,\,\,\,\,\,\,\,\,\,\times\,\,\,\,\,\,\,\,\,$}}	\\ \hline 
\ti & \ti & \ti & \,\,\,\,\,\otimes  \\ \hline
\ti & \ti & + &    \\ \hline
\ti & \otimes &  &  
\end{pmatrix}. 
\eeq 
For that, suppose the $\otimes$ blocks are  zero and let 
$M_{i-1}^\prime := \lambda M_{i-1} + M_{i}$ for some large $\lambda >0.$ Then by the Schur complement condition for positive definiteness, 
we find that $M_{i-1}^\prime(P_{i-1:i})$ is positive definite. 

So after a suitable rotation, we see that $(M_1, \dots, M_{i-2}, M_{i-1}^\prime, M_{i+1}, \dots, M_m)$ is a shorter regular facial reduction sequence. This sequence 
yields a reformulation 
with $m-1$ equations that certify the maximum rank of a feasible solution 
in \eqref{problem-Mi-X}. This contradiction completes the proof.
\qed  

How do we use Theorem \ref{thm-singularity-degree}? 
Recall that 
Algorithm \ref{algo-nonstrict-SDP} constructs an a semidefinite system of the form \eqref{p-Zstar}, 
\co{\begin{equation} \label{problem-generated-by-Alg1}
\begin{array}{rcl}
	\la Z^*,  X \ra & = & 0 \\
	\A X & = & b \\
	X & \succeq & 0,
\end{array}
\end{equation}}
in which the first $k+1$ equations certify that $X^*$ has maximum rank. 
Hence the singularity degree  is at most $k+1.$ 
 Theorem \ref{thm-singularity-degree} permits us to check that the singularity degree of the subsystem made up of the first $k+1$ equations is exactly $k+1:$ for that, we just need to verify that the critical blocks are all nonzero.
  
 The singularity degree of \eqref{p-Zstar} may end up being strictly less than $k+1, \,$ due to the other equations. Nevertheless, our computational study in Section 
 \ref{section-computational-results} will show that the difficulty of the generated SDPs increases with $k.$ 
 
 \co{
 We can similarly define the singularity degree of a semidefinite system in the form of \eqref{d}.
 We sketch the arguments for that next. We  rewrite this system in the  form \eqref{re-d} that constrains $Z$ by equations, i.e., in the primal form. The details are given in Subsection  \ref{subsection-proof-of-theorem-1-lemmas}.
Then we define the singularity degree of \eqref{d} as the singularity degree of the  system rewritten in the primal form.
 
  Following the arguments there, 
 this system can be rotated so the equations  $\la Y_j, Z \ra = 0$ for $j=1, \dots, \ell$ appear in it.
}
 
 \co{ It is desirable for this subsystem to have large singularity degree; and to achieve that, we only need to ensure that the critical blocks in the $Y_j$ are nonzero.
 We leave the details to the reader. 
}
 
 We can similarly define the singularity degree of \eqref{d} with the equation $\la X^*, Z \ra = 0$ attached.
We sketch the arguments for that next. We  rewrite this system in a form that constrains $Z$ by equations, i.e., in the primal form, using arguments  from Subsection  \ref{subsection-proof-of-theorem-1-lemmas}. It is desirable for this system to have large singularity degree.

Based on our current knowledge, this is impossible to achieve. However, we can achieve large singularity degree of a subsystem, as follows.
Following the arguments in  Subsection  \ref{subsection-proof-of-theorem-1-lemmas}, 
 this system can be rotated into RR form, so the equations 
 \begin{equation} \label{eqn-Xstar-Yj} 
 	\ba{rcl} 
 	\la X^*, Z \ra & = & 0 \\
 	\la Y_j, Z \ra & = & 0 \,\, \text{for} \, j=1, \dots, \ell
 	\ena
 	 \end{equation}
 	 in it certify the maximum rank of $Z^*.$ 
 We can ensure that the subsystem  \eqref{eqn-Xstar-Yj} has maximum singularity degree by 
 choosing nonzero critical blocks in all of the $Y_j.$ 
 
 \co{ 
  $\la Y_j, Z \ra = 0$ for $j=1, \dots, \ell$ appear in it. It is desirable for this subsystem to have large singularity degree; and to achieve that, we only need to ensure that the critical blocks in the $Y_j$ are nonzero.
 We leave the details to the reader. 
}

\co{
 \section{Feasibility version: generating instances of \eqref{p} with singularity degree at least two} 

For completeness we spell out the resulting algorithm: 
\begin{algorithm}[H]
	\caption{Construct \eqref{p} with singularity degree $\geq 1$} 
	\label{algo-P-sing-degree-geq-1}
	\begin{algorithmic}[1]
		\Statex  \label{algo-P-sin-degree-1-input}  {\bf Input:} Integers $n \geq 3, \;  m \geq 2.$
		\Statex   \label{algo-P-sin-degree-1-output} {\bf Output:} 
		\begin{itemize}
			\item 			SDP in the form of \eqref{p} with singularity degree $\geq 1$; 
			\item a maximum rank  feasible  solution $X^*$ in \eqref{p}.   
		\end{itemize}
		\State  \hspace{.05cm} Choose $r$ and $k$ positive integers such that $r+k = n.$   
		\State \label{algo-P-sin-degree-1-choose-X*-Z*} Choose $X^*$ in  the form given in 
		\eqref{eqn-Xstar-Zstar}, where $\Lambda$  is positive definite, of order $r.$ (Now we do not need  $Z^*.$)
			\item  \label{thm-main-1-b}  Choose a regular facial reduction sequence $(A_1, \dots, A_k)$ which reduces 
		$\psd{n}$ to 
		$$
		\bpx 0 & 0 \\
		0 & \oplus_r  \epx.
		$$
		\State \label{algo-P-sin-degree-1-set-bi} Set $b_i:= \la A_i,  X^* \ra $ for $i \in \{1, \dots, m\}. \, $  
	\end{algorithmic}
\end{algorithm}

For the next use of Theorem \ref{thm-singularity-degree} we focus on the primal \eqref{p}.
It is of interest to generate instances in which Slater's condition not only fails, i.e., the singularity degree is at least one, but it fails  "badly", i.e. \eqref{p} has large singularity degree. The next theorem will help with that.

\begin{Theorem} \label{thm-P0-D0} 
	
	Denote by $(P_0)$ and $(D_0)$ respectively, the SDPs \eqref{p} and \eqref{d} with $C=0.$
	
	Then the singularity degree of \eqref{p} is strictly greater than one 
	$\Leftrightarrow$ 
	$(P_0)$ and $(D_0)$ fails strict complementarity.

\end{Theorem}

\pf{} 
Since \eqref{p} is feasible, the optimal value of $(D_0)$ is zero. 
We first claim that optimal solutions of  $(D_0)$ are one-to-one correspondence with the 
first equation in some reformulation.

Indeed, first suppose $Z$ is optimal in $(D_0).$ 
Then $	\la T^\top (- Z ) T, X \ra = 0 \,\, $ is the first equation in a reformulation.
Conversely, suppose  
$	\langle A_1', X\rangle = 0$ 
is the first equation in some reformulation of \eqref{p}. Then 
$A_1^\prime = T^\top (- Z) T$ for some $Z$ optimal in $(D_0).$ 

Let $r$ denote the rank of a maximum rank of a feasible solution in \eqref{p}.
Given our claim, we see that 
\co{\begin{eqnarray*}
	\text{There is a reformulation of } \eqref{p} \, \text{with} \, k =1 & \Leftrightarrow & \text{There is a } \, Z \, \text{optimal in}  D_0 \, \text{with rank} \,   n-r \\
	& \Leftrightarrow & (P_0) \, \text{and} \, (D_0) \, \text{fail strict complementarity} 
\end{eqnarray*}
}
\begin{align*}
	&\text{There is no reformulation of } \eqref{p} \text{ with } k=1  \\
	&\qquad \Leftrightarrow
	\text{ there is no } Z \text{ optimal in } (D_0)
	\text{ with } \operatorname{rank} Z = n-r  \\
	&\qquad \Leftrightarrow
	(P_0) \text{ and } (D_0) \text{ fail strict complementarity.}
\end{align*}
\qed

Given Theorem \ref{thm-P0-D0} it is straightforward to generate any instance of \eqref{p} with singularity degree 
equal to one. 
We can do this by modifying Algorithm as follows: we choose $r$ and $s$ such that $r + s = n$ and  $k = \ell = 0;$ we skip 
Steps \ref{algo-nonstrict-SDP-choose-Ai-Yj}  and \ref{algo-nonstrict-SDP-adjust-Ai-Yj}; 
and in Step \ref{algo-nonstrict-SDP-extend} we choose $A_1 = Z^*.$ 

For completeness we spell out the resulting algorithm: 
	\begin{algorithm}[H]
	\caption{Construct \eqref{p} with singularity degree one} 
	\label{algo-P-sin-degree-1}
	\begin{algorithmic}[1]
		\Statex  \label{algo-P-sin-degree-1-input}  {\bf Input:} Integers $n \geq 3, \;  m \geq 2.$
		\Statex   \label{algo-P-sin-degree-1-output} {\bf Output:} 
		\begin{itemize}
			\item 			SDP in the form of \eqref{p} with singularity degree one; 
			\item a maximum rank  feasible  solution $X^*$ in \eqref{p} and a maximum rank 
			optimal solution $Z^*$ in $(D_0).$  
				\end{itemize}
		\State  \hspace{.05cm} Choose $\; r, s \, $ positive integers such that $r + s = n. $  
		\State \label{algo-P-sin-degree-1-choose-X*-Z*} Choose $X^*$ and $Z^*$ of the form 
		\eqref{eqn-Xstar-Zstar}, where $\Lambda$ and $\Gamma$ are positive definite, 
		of order $r$ and  $s, \,$ respectively.
		\State Let $A_1 := Z^*,$ and choose $A_2, \dots, A_m$ arbitrarily.
		\State \label{algo-P-sin-degree-1-set-bi} Set $b_i:= \la A_i,  X^* \ra $ for $i \in \{1, \dots, m\}, \, $  $C := 0.$  
	\end{algorithmic}
\end{algorithm}
An argument similar to the one in the proof of Theorem \ref{thm-alg-nonstrict-correct} proves that any 
instance of \eqref{p} with singularity degree one arises as a reformulation of some output of Algorithm 
\ref{algo-P-sin-degree-1}. 

Next we sketch how to generate any instance of \eqref{p} with singularity degree greater than one.
We run the algorithm as it is, except in Step \ref{algo-nonstrict-SDP-extend} 
we choose $A_{k+1} = Z^*.$

An argument like in the proof of Theorem \ref{thm-alg-nonstrict-correct} shows that 
any instance of \eqref{p} with singularity
 degree strictly larger than one is among the outputs of this algorithm.

It is an interesting open problem to ???

\co{
$$
\begin{array}{rcll} 
\la A_1^\prime, X \la & = & 0 & \text{is the first equation}  \Leftrightarrow \\ 
A_1^\prime & = & T^\top \A^* y T & \text{where} \, \A^* y \succeq 0, \, \la b, y \ra = 0 \Leftrightarrow  \\
(-y, T^{- \top} A_1^\prime T ) & \text{is} & \text{feasible in} \eqref{d}
\end{array} 
$$
}

}
\section{Computational results}
\label{section-computational-results}

\co{
Settings for interval:
\begin{itemize}
	\item spread of svecA and svecY $\leq 10^6.$
	\item Condition number of svec A $\leq 10.$
	\item Condition number of svecY $\leq 1000.$ 
\end{itemize}

Settings for Sturm:
\begin{itemize}
	\item spread of svecA and svecY $\leq 10^6.$
	\item Condition number of svec A $\leq 20.$
	\item Condition number of svecY $\leq 1000.$ 
\end{itemize}
}

We implemented Algorithms \ref{algo-nonstrict-SDP} and \ref{algo-nonstrict-Slater-SDP} in Matlab, and ran 
extensive computational experiments. We first generated SDPs in the form of \eqref{p}
which are in the normal form described in Theorem \ref{thm-main}. Then we  rotated \eqref{p} by an orthonormal matrix $T.$ 
While Theorem \ref{thm-main} applies rotations by matrices, which are merely invertible, 
for the purpose of computing the  forward errors it is relevant to choose $T$ as orthonormal: see Subsection 
\ref{subsection-computational-results-backward-and-forward-errors}. 

\subsection{Controlling the condition number and spread}
\label{subsection-computational-results-condition-number} 

To describe our criteria of what makes a good SDP instance, we define (with some abuse of notation)
$$
\svec (\A) := \bpx \svec A_1 \\ \dots \\ \svec A_m \epx, \, \svec (Y) := \bpx \svec Y_1 \\ \dots \\ \svec Y_\ell \epx. 
$$
For a matrix $M$ we define its {\em spread} as 
$$
\spread(M) := \dfrac{\max \, \{  |m_{ij}| \} }{\min \{ |m_{ij}|: \, m_{ij} \neq 0 \} }.
$$  
We first generated instances which are in the normal form given in Theorem \ref{thm-main} with the accompanying $Y_j.$ 
Then we  rotated \eqref{p} by a random 
$T$ matrix with orthonormal columns. To keep the inner product $\la A_i, Y_j \ra$ the same, we also rotated the $Y_j$ by the same 
$T.$ 

Our goal was to generate instances in which (after the rotation) 
\begin{enumerate}
	\item \label{criterion1} The condition number of $\svec(\A)$ is small;
	\item \label{criterion2} The spread of  $\svec(\A)$ is moderate; and 
	\item \label{criterion3} The spread of  of $\svec(Y)$ is moderate. 
	\end{enumerate} 
The $Y_j$ actually do not even appear in the SDPs. Our reason to satisfy criterion \eqref{criterion3} is that we believe that 
a future  algorithm that computes a solution with small forward error may actually have to construct the $Y_j.$ 

We first discuss the relevance of these criteria.
The role of the condition number is well documented in the numerical analysis literature; see, for example, the excellent books \cite{trefethen2022numerical, overton2001numerical}.
The spread seems to have received less attention, and in fact we coined this terminology ourselves.
Optimization problems with large spread are often said to be ``badly scaled''; see, e.g., \cite{klotz2014identification} for a discussion of how bad
 scaling can negatively affect the accuracy of computed solutions in linear programming.

\co{
We first discuss the importance of these criteria. 
The importance of the condition number is well documented in the numerical analysis literature:
see for example, the excellent books \cite{trefethen2022numerical, overton2001numerical}.
The  spread seems to have received less attention; in fact, we coined this terminology ourselves. 
An optimization problem with large spread is often said to be "badly scaled:"
 see e.g. \cite{klotz2014identification} for discussions 
on how  large spread can negatively impact accuracy of computed solutions in linear programming.
}

To achieve criterion \eqref{criterion1}, we scaled each $A_i$ to make its Frobenius norm equal to $1.$

Next we discuss how we achieved criterion \eqref{criterion2}. Suppose Algorithm \ref{algo-nonstrict-SDP} in Step \ref{algo-nonstrict-SDP-choose-Ai-Yj} chose  
$A_1, \dots, A_k.$ 
In 
Step \ref{algo-nonstrict-SDP-extend} we  first generated {\em all} $A_i$ which were  orthogonal to 
$A_1, \dots, A_k$ and all $Y_j.$ Then we discarded all $A_i$ for which the 
spread of $T^\top A_i T$ was large. Finally, from the $A_i$ that were left, we randomly chose 
$A_{k+1}, \dots, A_m.$ This method is heuristic, since it does not 
even look at the spread of $\svec(\A)$ but it worked well.

Further, in all  the instances we tested, we set 
\begin{itemize}
	\item   an upper bound of $20$ on the condition number of $\svec(\A);$ and  
	\item  an upper bound of $10^6$ on the spread of both $\svec(\A)$ and $\svec(Y).$ 
\end{itemize}
If after all our precautions, these criteria were not met, then 
we just discarded  the instance we generated.

 To solve the SDPs we used SDPT3 \cite{toh1999sdpt3} as solver, with the Nesterov-Todd search direction.
We called SDPT3 from the YALMIP interface \cite{lofberg2004yalmip}. 

\subsection{How to compute the backward and forward errors?}
\label{subsection-computational-results-backward-and-forward-errors} 

\co{
LAPACK ERROR COMPUTING: 
\begin{verbatim*}
https://www.netlib.org/lapack/lug/node75.html
\end{verbatim*}
}
In this subsection we first discuss how we computed the backward errors.
The backward errors are also known as the DIMACS errors for SDPs.
They form a vector of six elements, and measure  (i) 
primal and dual violation of equality and semidefinite constraints; (ii)  the duality gap; and (iii) the 
measure of how far $X$ and $Z$ from satisfying the complementarity equation 
$\la X, Z \ra = 0.$ For a detailed description, we refer to \cite{mittelmann2003independent}.
Thus the DIMACS errors form a vector of six elements, and we report the infinity norm of this vector.

We also report on the $5$th DIMACS error separately, which is defined as
$$
\frac{\la C, X \ra - \la b, y \ra}{1+|\la C, X \ra| +|\la b, y \ra|}. 
$$
Occasionally this error turns out to be  negative. Although we have not seen a theoretical analysis of this situation, 
in such a case the solution pair may be quite inaccurate.
So, to be on the safe side, we report the most negative of these errors for both families of instances we tested.

Next we discuss how we computed the forward errors.
Suppose we generated an instance \eqref{p}, then rotated it by an orthonormal matrix $T$ to obtain the instance 
that we call 
$(P_T).$ Let $(D_T)$ denote the dual of $(P_T).$

First suppose $X$  is an approximate optimal solution of $(P_T).$ 
Proposition \ref{prop-reform-invariance} implies that for $Y \in \symn$ we have 
\begin{equation} \label{eqn-eqn-optimal-P-PT} 
	Y \, \text{is optimal in } \, (P_T) \; \Leftrightarrow \; T Y T^\top \, \text{is optimal in } \eqref{p}.
\end{equation}
Hence the distance of $X$ to the optimal set of $(P_T)$ is computed as 
\begin{equation} \label{eqn-compute-forward-error-P}
	\begin{array}{rcl} 
		\inf \, \{ \, \norm{X-Y} \, : \, Y \, \text{is optimal in } (P_T) \, \} & = & \inf \, \{ \, \norm{T(X-Y) T^\top} \, : \, Y \, \text{is optimal in } (P_T) \, \} \\
		& = & \inf \, \{ \, \norm{T X T^\top - TYT^\top } \, : \, Y \, \text{is optimal in } (P_T) \, \} \\
		& = & \inf \, \{ \, \norm{T X T^\top - Y^\prime } \, : \, Y^\prime  \, \text{is optimal in } \eqref{p} \,  \}.
	\end{array}
\end{equation}

Here the norm corresponds to the Frobenius norm of matrices.\co{ The first equation is by definition, the second follows 
since rotating by an orthonormal  $T$ preserves the Frobenius norm, the third is obvious, and the last is from 
\eqref{eqn-eqn-optimal-P-PT} (with $Y$ in place of $X.$)
}
The first equation follows 
since rotating by an orthonormal  $T$ preserves the Frobenius norm, the second  is obvious, and the last is from 
\eqref{eqn-eqn-optimal-P-PT} (with $Y$ in place of $X.$)

The last quantity in \eqref{eqn-compute-forward-error-P} is still impossible to compute exactly, but it is easy to bound from below as we next show.
In any optimal solution of \eqref{p} only the lower right order $r$ block can be nonzero.
Thus, we obtain a lower bound by setting the lower  right order $r$ block of $T X T^\top$ to zero, and 
compute the Frobenius norm of the resulting matrix.

Next suppose $Z \,$ is an approximate an optimal solution in $(D_T).$ 
Part \eqref{prop-reform-invariance-d} in  Proposition \ref{prop-reform-invariance} implies 
\begin{equation} \label{eqn-eqn-optimal-P-PT} 
	Z \, \text{is optimal in } \, (D_T) \; \Leftrightarrow \; T^{- \top} Z T^{-1} =  T Z T^{\top} \, \text{is optimal in } \eqref{d},
\end{equation}
where the equality comes from $T^{-1}=T^\top.$ 
Hence the distance of $Z$ to the optimal set of $(D_T)$ is computed as 
\begin{equation} \label{eqn-compute-forward-error-D}
	\begin{array}{rcl} 
		\inf \, \{ \, \norm{Z-Y} \, : \, Y \, \text{is optimal in } (D_T) \, \} & = &  \inf \, \{ \, \norm{T Z T^\top - Y^\prime } \, : \, Y^\prime  \, \text{is optimal in } \eqref{d} \,  \}.
	\end{array}
\end{equation}
The argument to show \eqref{eqn-compute-forward-error-D} is analogous to the one carried out in \eqref{eqn-compute-forward-error-P}.
For brevity, we omitted the intermediate steps.

To lower bound the quantity in \eqref{eqn-compute-forward-error-D}, we set the upper left order $s$ minor of 
$T Z T^\top$ to zero, and computed the Frobenius norm of the resulting matrix.

\subsection{Interval  partition structure} 

\label{subsection-computational-results-random}

In our first problem set we chose $X^*$ and $Z^*$ as in \eqref{eqn-Xstar-Zstar}, both ranks being two.
We then set 
$$
P_0 := \{1,2 \}, \, G := \{ 3,4, \dots, n-3, n- 2 \}, \, Q_0 := \{n-1, n \},
$$
then selected $k \in \{1, 2, \dots, |G| \}.$ We then chose $P_1, \dots, P_k$ to be an interval partition of $G:$ that is,
$P_1$ comprised the first few elements of $G; $ $P_2$ the next few; and so on.
Then we chose $A_i$ for $i=1, \dots, k$ so $(Z^*, A_1, \dots, A_k)$ was a regular facial reduction sequence with 
 structure $( P_0, P_1, \dots, P_k).$ 

To define the $Y_j$ we  set $\ell := k$ and set 
$$
Q_1 := P_k, Q_2 := P_{k-1}, \dots, Q_k := P_1.
$$
Then  we choose $Y_j$ for $j=1, \dots, k$ so $(X^*, Y_1, \dots, Y_k)$ was a regular facial reduction sequence with 
structure $( Q_0, Q_1, \dots, Q_k).$ 

For $i=1, \dots, k$ we  generated the diagonal block of all the $A_i$  to have entries in $(0,4].$ 
We also generated the dense block of the $A_i, \,$ i.e., the nonzero offdiagonal block   to have entries in $[-2,2].$  
We generated all the $Y_j$ in an analogous manner.

Thus, even before the rotation, the offdiagonal blocks of the $A_i$ and $Y_j$ were dense; and $A_{k+1},\allowbreak A_{k+2},\allowbreak \ldots,\allowbreak A_m$
 were usually 
completely
dense. See Figure \ref{fig:sparsity-interval} for 
the sparsity structure of the $A_i$ and $Y_j$ when $n=10, m=5, \, $ and $k = \ell=3.$ 

See Figure \ref{fig:sparsity-interval} for 
the sparsity structure of the $A_i$ and $Y_j$ when $n=10, m=5, \, $ and $k = \ell=3.$ 
\begin{figure}[h]
	\centering
	
	\includegraphics[width=\textwidth]{plotA.jpg}
	
	\vspace{1cm}
	
	\includegraphics[width=0.75\textwidth]{plotY.jpg}
	
	\caption{Sparsity structure of $A_i$ and $Y_j$ for the interval partition instances} 
\label{fig:sparsity-interval}
\end{figure}
(We did not actually tested  instances computationally with this choice of parameters, so this figure is just for illustration.)
After rotating by $T, \,$ all the $A_i$ and $Y_j$ were dense. 

We generated instances with $n=10$ and $n=20. $ For each $n$ we chose $m$ to start at $n, \,$ and by steps of $10$ go all the way to $3 n.$ 
For each $n$ and $m$ we chose $k$ to go from $1$ to $|G|.$ And for each $k$ we tested  $20$ instances.

We found that the $5$th DIMACS errors were all at least  $- 2 \times 10^{-6}.$
On $16$ instances SDPT3 did not return a $0$ termination code. 

To make the results accessible, we present them in Tables \ref{tab:worstdimacserror:random}, \ref{tab:worstforwarderror:random}, and \ref{tab:worstratio:random},
and Figures \ref{fig:avg-error-10-10} through \ref{fig:avg-ratio-20-60}. 

We first explain the tables. In each table the row "nonSlater" refers to instances that were generated by Algorithm \ref{algo-nonstrict-SDP}.
These are not guaranteed to be Slater on either the primal or dual side -- 
of course, accidentally they still may turn out to be Slater on one or both of the sides, but this is unlikely.
In each table the row "Slater" refers to instances that were generated by Algorithm  \ref{algo-nonstrict-Slater-SDP}.
 These are  guaranteed to be Slater on both sides.

Table \ref{tab:worstdimacserror:random} shows the worst DIMACS errors. 
For example, the entry $(30,4,1.0922\times 10^{-1})$ 
 in the upper left corner means: among the "nonSlater" instances when $n=10, \,$ 
the worst DIMACS error occurred with the choice  $m=30$ and $k=4,$ and it was equal to
$1.0922\times 10^{-1}.$  
   
Table \ref{tab:worstforwarderror:random} shows the worst forward  errors, and 
Table \ref{tab:worstratio:random} shows the worst, i.e., largest (forward error)/(DIMACS error) ratios. The entries in these tables 
have analogous meanings.

We glean the following from these tables:
\begin{enumerate}
	\item The DIMACS errors can be as large as $10^{-1}.$ Thus, some of these instances are challenging even when we 
	minimize  the 	error in the traditional sense.
	\item The forward errors and the ratio  of the forward errors to DIMACS errors is strikingly large:
	the first is six to seven orders of magnitude larger.
	
	For example, in the "nonSlater" instances the largest ratio is $4.9143 \times 10^6, \,$ which is achieved when $n=20, \, m = 40, \, $ and $k=16.$
	In that case the forward and DIMACS errors are  $1.6099 \times 10^1$ and $3.2760 \times 10^{-6}, \,$ respectively.
	Thus the huge forward error is not predicted by the reasonably small DIMACS error.
	
	\item Even when Slater's condition holds, the errors are still large, although in some cases they are a bit better than in the "nonSlater" case.
	
\end{enumerate}

\co{

\begin{table}[h]
	\centering
	\begin{tabular}{|c|c|c|}
		\hline
		& $n=10$ & $n=20$ \\
		\hline
		nonSlater & $(30,5,2.0390\times 10^{-1})$ & $(60,9,2.1934\times 10^{-1})$ \\
		\hline
		Slater & $(30,5,1.0789\times 10^{-1})$ & $(50,4,6.0146\times 10^{-2})$ \\
		\hline
	\end{tabular}
	
		\caption{Worst DIMACS errors, interval partition instances}
	\label{tab:worstdimacserror:random:old}
	
\end{table}

}

\begin{table}[h]
	\centering
	\begin{tabular}{|c|c|c|}
		\hline
		& $n=10$ & $n=20$ \\
		\hline
		nonSlater & $(30,4,1.0922\times 10^{-1})$ & $(60,9,7.6444\times 10^{-2})$ \\
		\hline
		Slater & $(20,5,7.5534\times 10^{-2})$ & $(60,7,2.4109\times 10^{-2})$ \\
		\hline
	\end{tabular}
	
	\caption{Worst DIMACS errors, interval partition instances}
	\label{tab:worstdimacserror:random}
	
\end{table}

\co{

\begin{table}[h]
	\centering
	\begin{tabular}{|c|c|c|}
		\hline
		& $n=10$ & $n=20$ \\
		\hline
		nonSlater & $(10,6,1.2652\times 10^{1})$ & $(20,2,4.3746\times 10^{1})$ \\
		\hline
		Slater & $(10,2,8.0539\times 10^{0})$ & $(40,2,1.6946\times 10^{1})$ \\
		\hline
	\end{tabular}
	
		\caption{Worst forward errors, interval partition instances}
	\label{tab:worstforwarderror:random:old}
	
\end{table}

}
\begin{table}[h]
	\centering
	\begin{tabular}{|c|c|c|}
		\hline
		& $n=10$ & $n=20$ \\
		\hline
		nonSlater & $(10,6,9.8458\times 10^{0})$ & $(20,2,3.7310\times 10^{1})$ \\
		\hline
		Slater & $(10,6,7.9106\times 10^{0})$ & $(30,12,2.0400\times 10^{1})$ \\
		\hline
	\end{tabular}
	
		\caption{Worst forward errors, interval partition instances}
	\label{tab:worstforwarderror:random}
	
\end{table}

\co{

\begin{table}[h]
	\centering
	\begin{tabular}{|c|c|c|}
		\hline
		& $n=10$ & $n=20$ \\
		\hline
		nonSlater & $(10,6,1.8165\times 10^{6})$ & $(20,15,1.0169\times 10^{7})$ \\
		\hline
		Slater & $(10,6,7.1105\times 10^{5})$ & $(40,16,3.6505\times 10^{6})$ \\
		\hline
	\end{tabular}

	\caption{Worst (forward error)/(DIMACS error) ratios, interval partition instances}
	\label{tab:worstratio:random:old}
	
\end{table}

}

	\begin{table}[h]
		\centering
		\begin{tabular}{|c|c|c|}
			\hline
			& $n=10$ & $n=20$ \\
			\hline
			nonSlater & $(10,6,1.1116\times 10^{6})$ & $(40,16,4.9143\times 10^{6})$ \\
			\hline
			Slater & $(10,5,4.9636\times 10^{5})$ & $(30,16,6.8643\times 10^{6})$ \\
			\hline
		\end{tabular}
		
			\caption{Worst (forward error)/(DIMACS error) ratios, interval partition instances}
		\label{tab:worstratio:random}
		
	\end{table}

	\FloatBarrier

We next visualize the results on Figures \ref{fig:avg-error-10-10} through \ref{fig:avg-ratio-20-60}.
We do so in a compact manner, and only show figures with $n=10$ and $n=20$ and 
$m \in \{ n, 2n, 3n \}.$ (So we left out figures for $n=20$ and $m=30,50.$)
\co{To explain these figures, we recall that $k$ is an upper bound on the singularity degree of 
\eqref{p} with the equation $\la Z^*, X \ra = 0$ attached; and $\ell$ is an upper 
bound on the singularity degree of 
\eqref{d} with the equation $\la X^*, Z \ra = 0$ attached.} In each figure that shows 
DIMACS and forward errors, we plotted the {\em average} DIMACS and forward errors over 
$100$ runs as a function of $k$ (and $k = \ell$).
 In each figure that shows ratios, we plotted 
  the {\em average} ratios of forward/DIMACS errors over $100$ runs.

From these figures we learn that: 
\begin{itemize}
	\item When $m$ is small, the forward errors generally increase as $k$ and $\ell$ increase. 
	 When $m$ is larger, the increase of the  forward errors is less pronounced as $k$ and $\ell$ grow.
	 In fact, there is often a "dip" in the forward errors for larger $k.$
	 \item Nevertheless, the {\em ratio} of the forward and DIMACS errors still increases as $k$ and $\ell$ increase.
	 (The only exception to this rule is  the nonSlater instances, when $n=10, $ and $m=30.$) We can phrase this as: the worse the true error gets,  the more difficult it is to detect.
\end{itemize}

\begin{figure}[H]
	\centering
	\begin{minipage}{0.48\textwidth}
		\centering
		\includegraphics[width=\textwidth]{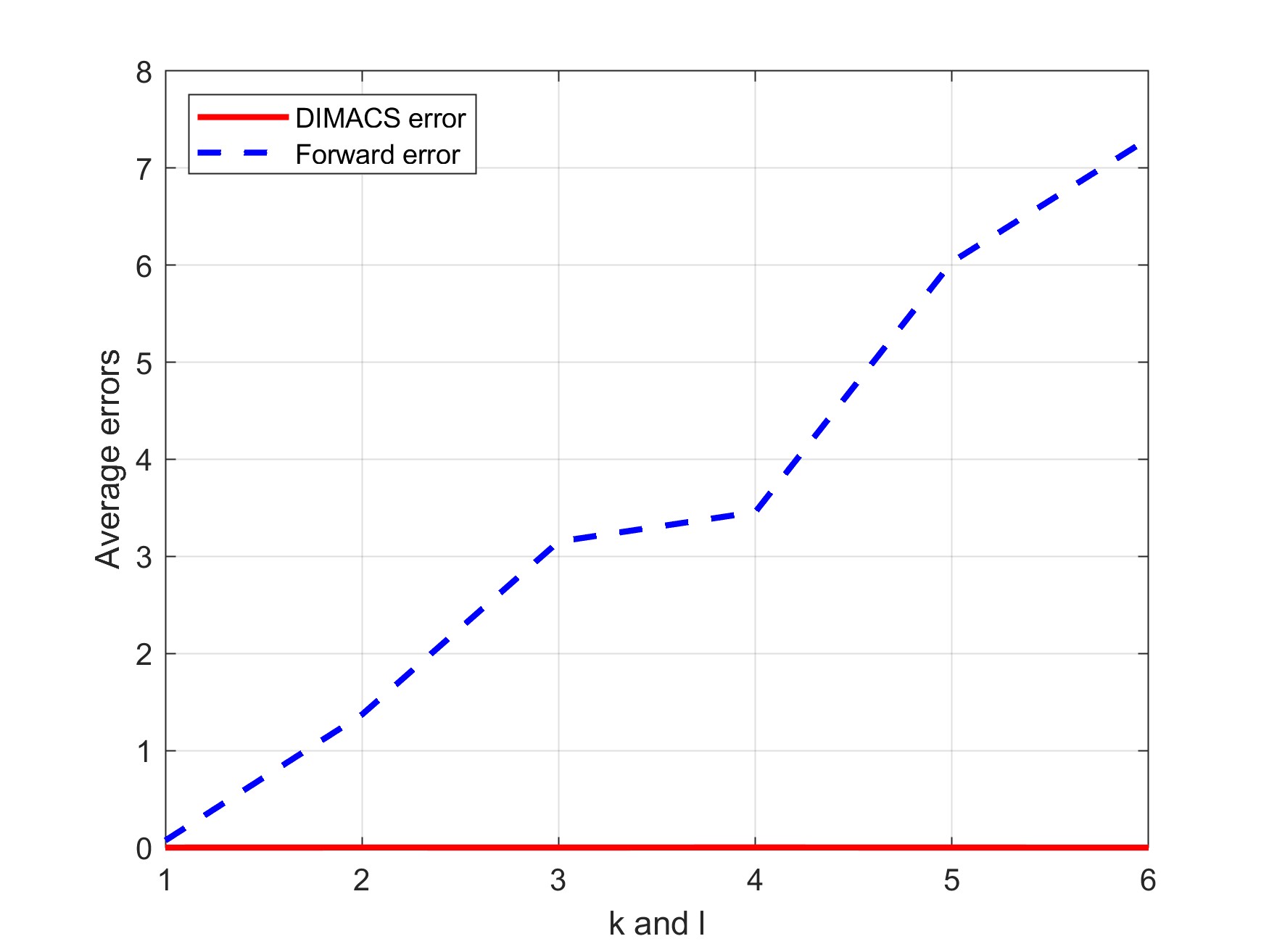}
	\end{minipage}
	\hfill
	\begin{minipage}{0.48\textwidth}
		\centering
		\includegraphics[width=\textwidth]{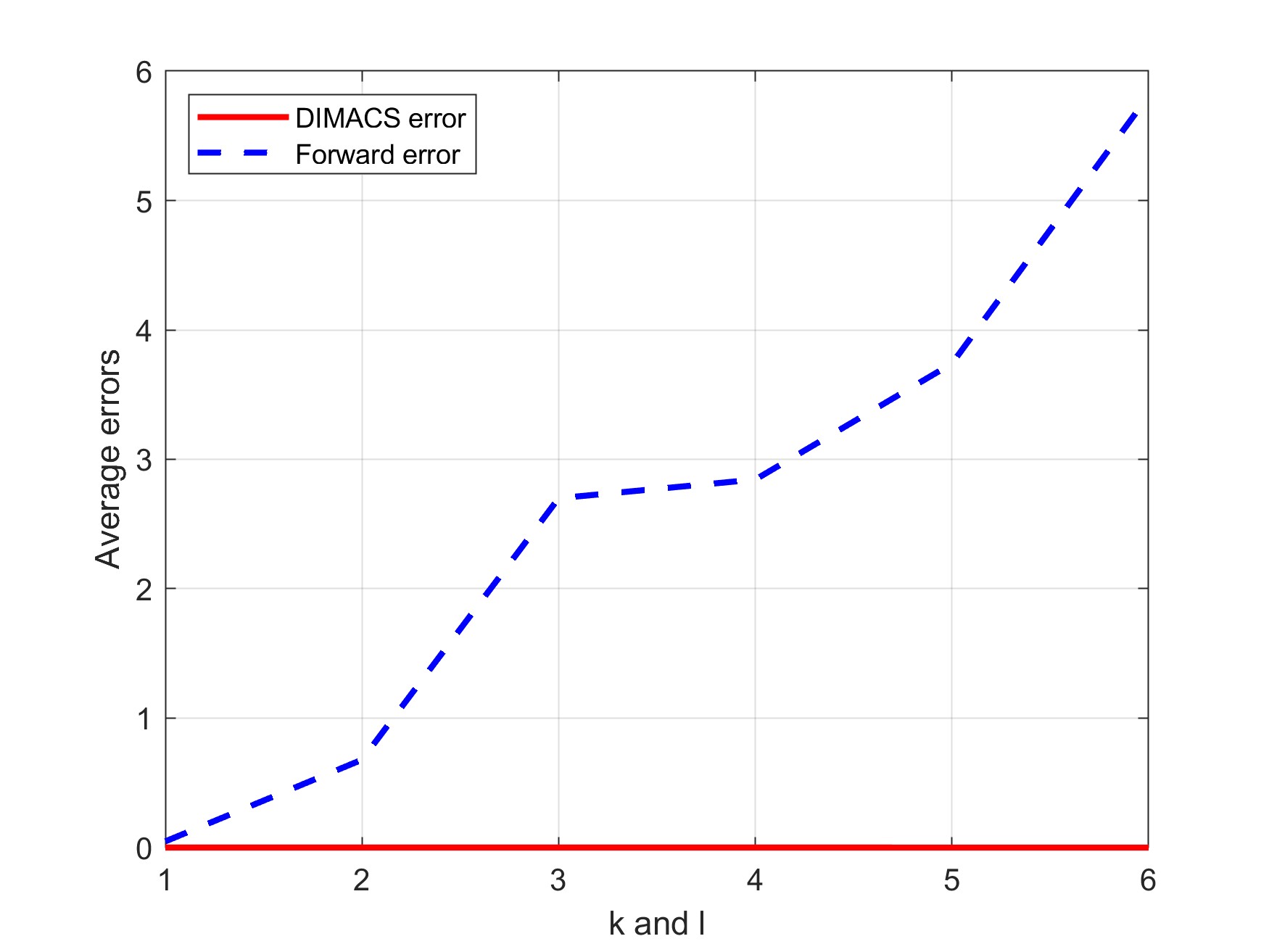}
	\end{minipage}
	\caption{Average errors, $n=10, m=10$: nonSlater (left) and Slater (right).}
	\label{fig:avg-error-10-10}
\end{figure}

\begin{figure}[H]
	\centering
	\begin{minipage}{0.48\textwidth}
		\centering
		\includegraphics[width=\textwidth]{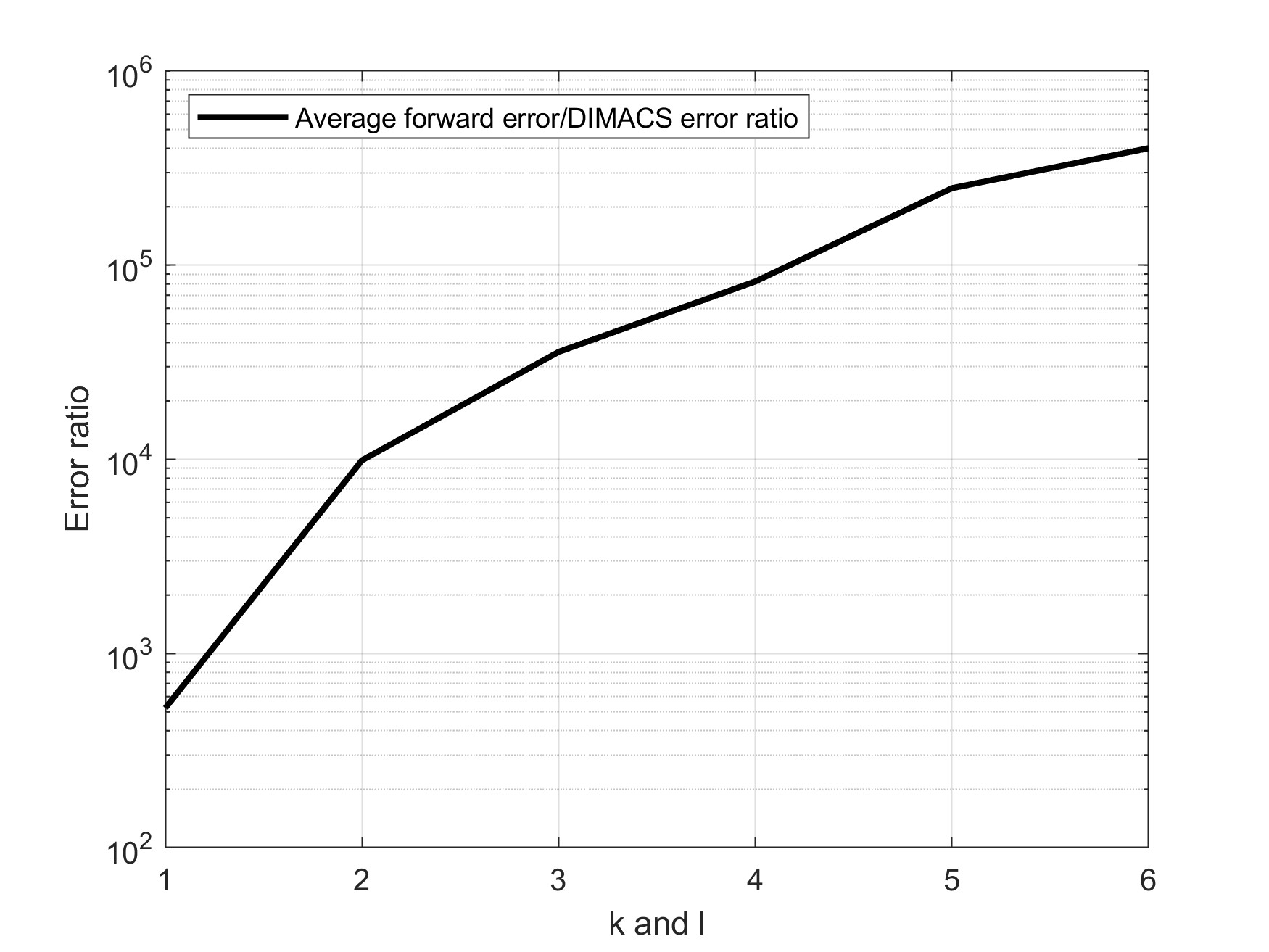}
	\end{minipage}
	\hfill
	\begin{minipage}{0.48\textwidth}
		\centering
		\includegraphics[width=\textwidth]{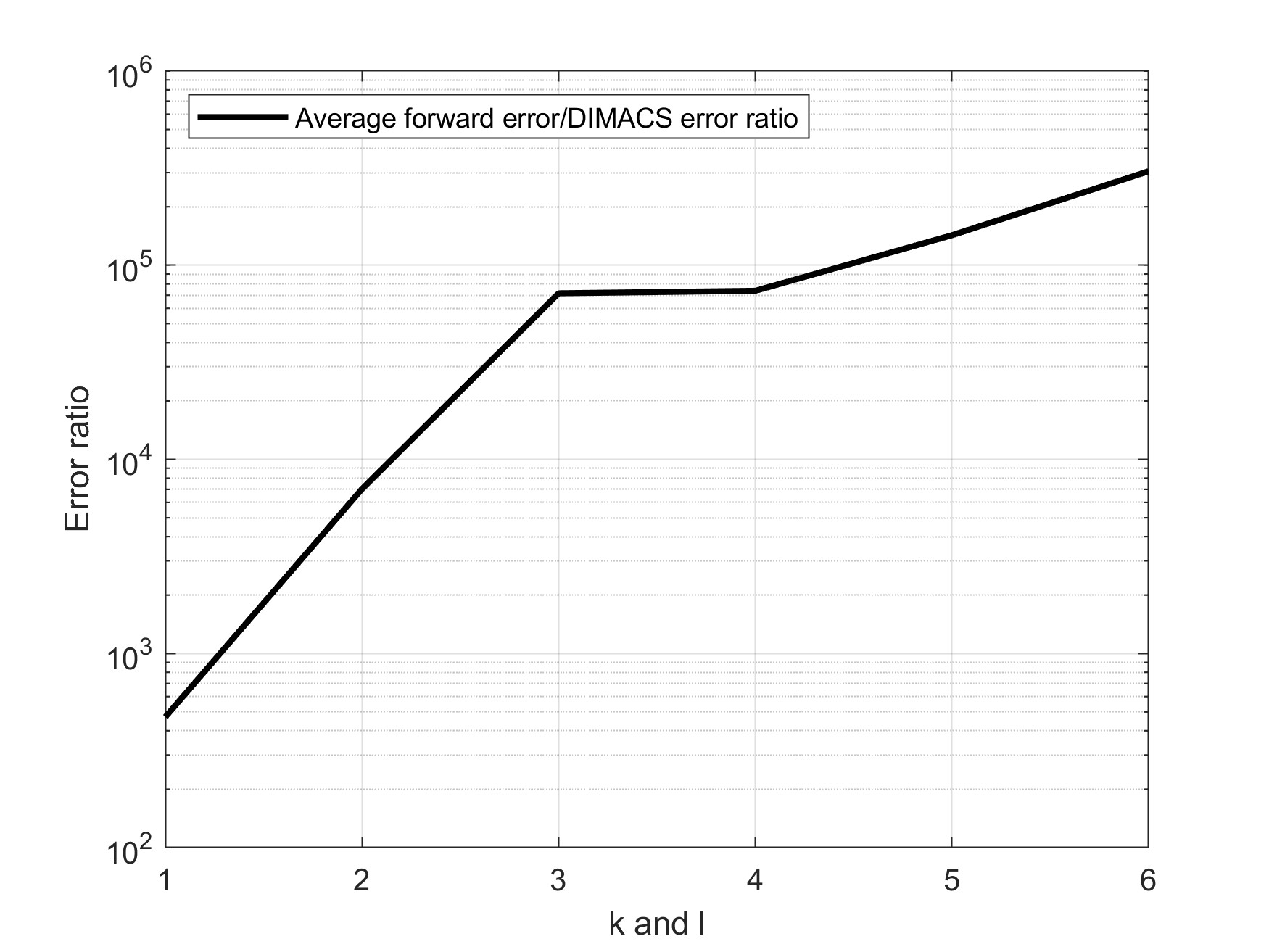}
	\end{minipage}
	\caption{Average ratios, $n=10, m=10$: nonSlater (left) and Slater (right).}
	\label{fig:avg-ratio-10-10}
\end{figure}

\begin{figure}[H]
	\centering
	\begin{minipage}{0.48\textwidth}
		\centering
		\includegraphics[width=\textwidth]{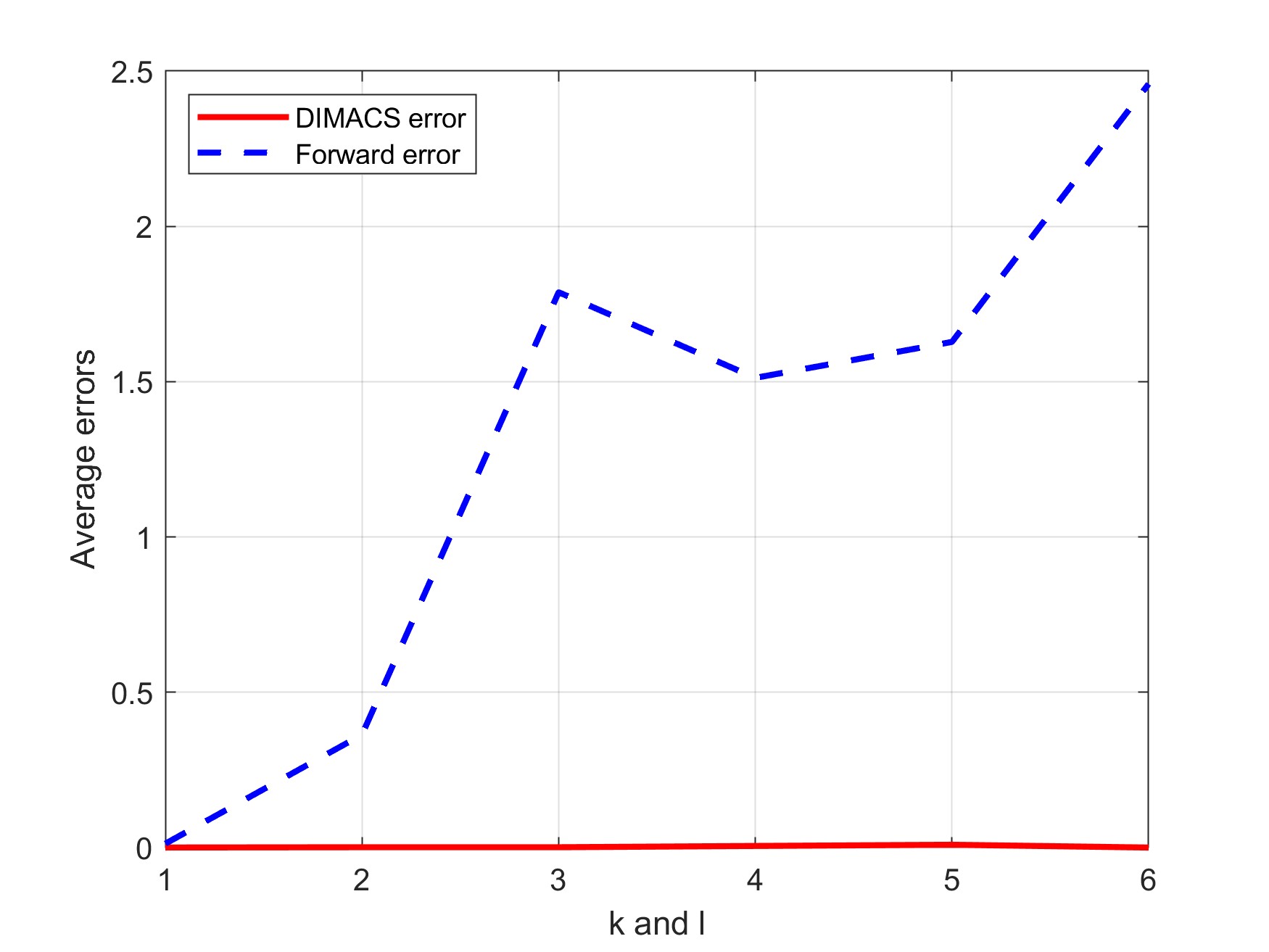}
	\end{minipage}
	\hfill
	\begin{minipage}{0.48\textwidth}
		\centering
		\includegraphics[width=\textwidth]{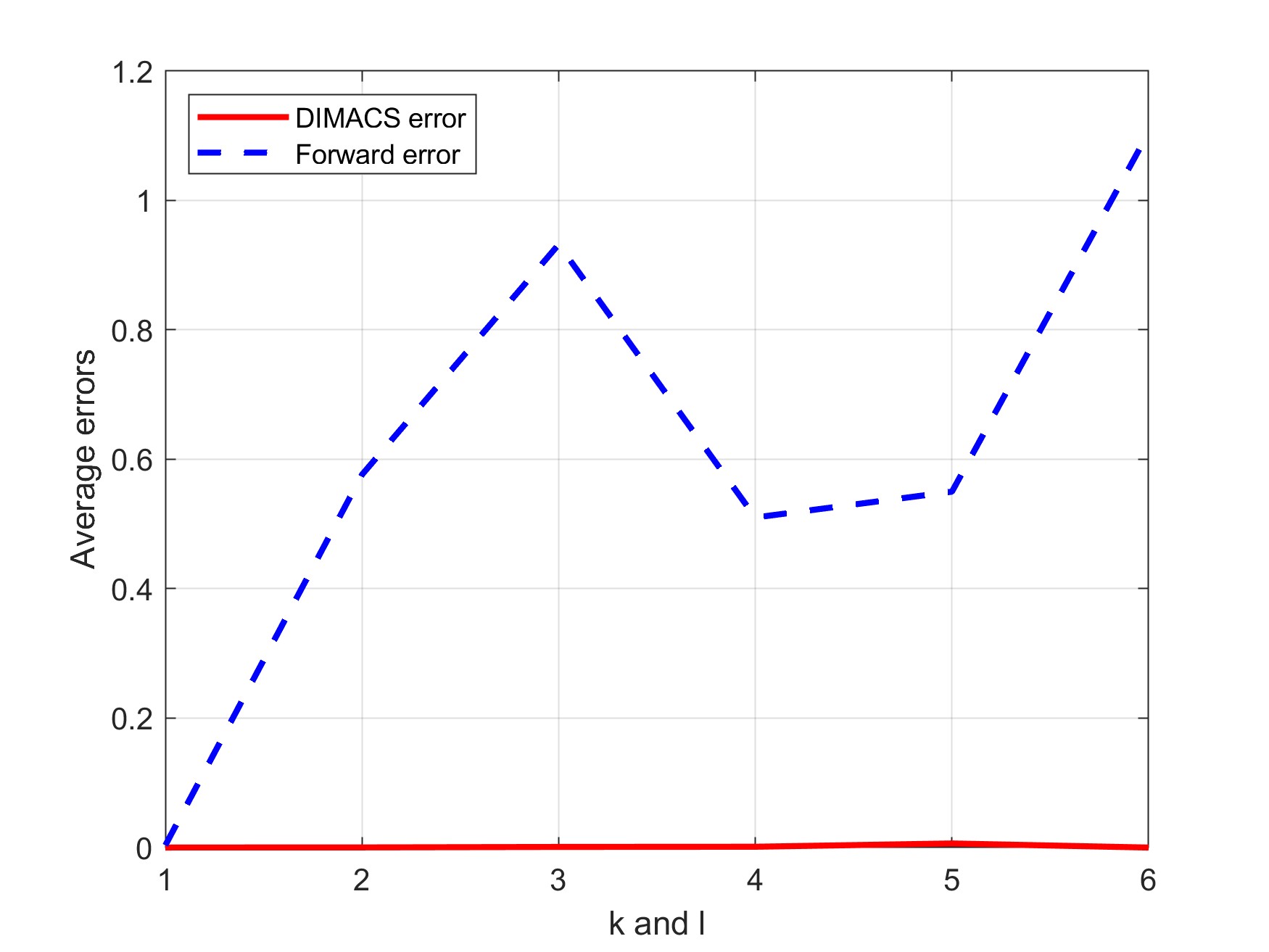}
	\end{minipage}
	\caption{Average errors, $n=10, m=20$: nonSlater (left) and Slater (right).}
	\label{fig:avg-error-10-20}
\end{figure}

\begin{figure}[H]
	\centering
	\begin{minipage}{0.48\textwidth}
		\centering
		\includegraphics[width=\textwidth]{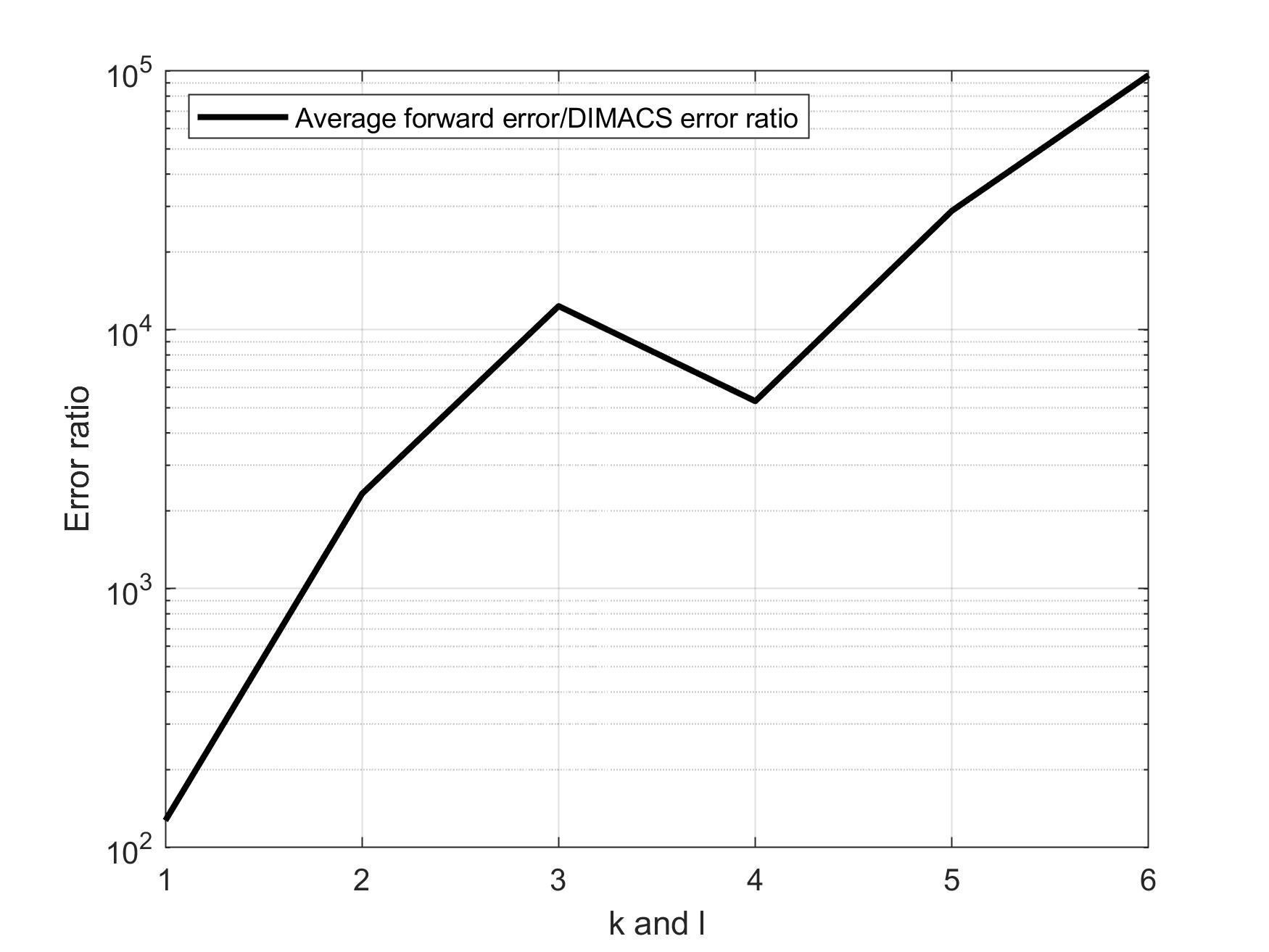}
	\end{minipage}
	\hfill
	\begin{minipage}{0.48\textwidth}
		\centering
		\includegraphics[width=\textwidth]{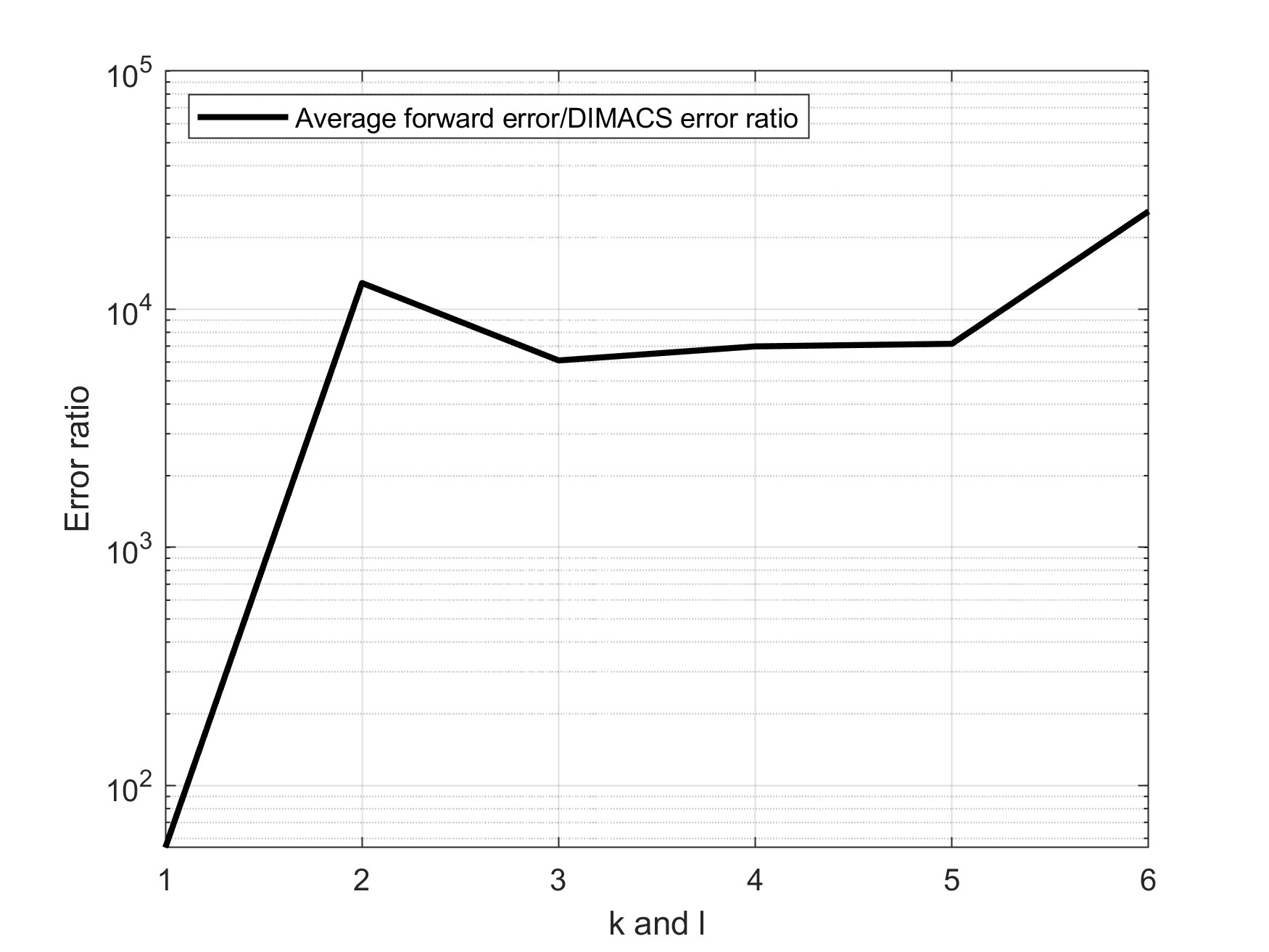}
	\end{minipage}
	\caption{Average ratios, $n=10, m=20$: nonSlater (left) and Slater (right).}
	\label{fig:avg-ratio-10-20}
\end{figure}

\begin{figure}[H]
	\centering
	\begin{minipage}{0.48\textwidth}
		\centering
		\includegraphics[width=\textwidth]{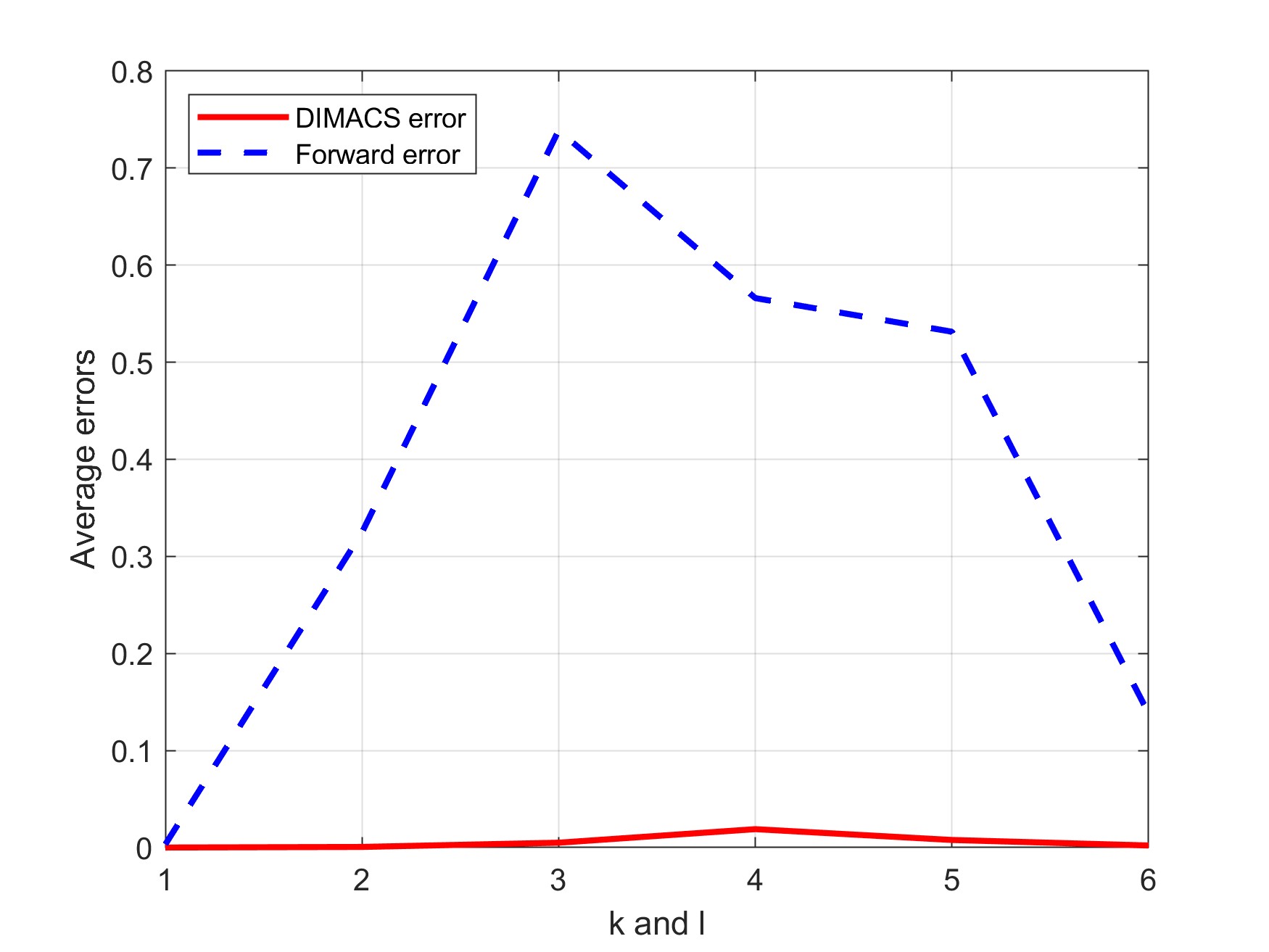}
	\end{minipage}
	\hfill
	\begin{minipage}{0.48\textwidth}
		\centering
		\includegraphics[width=\textwidth]{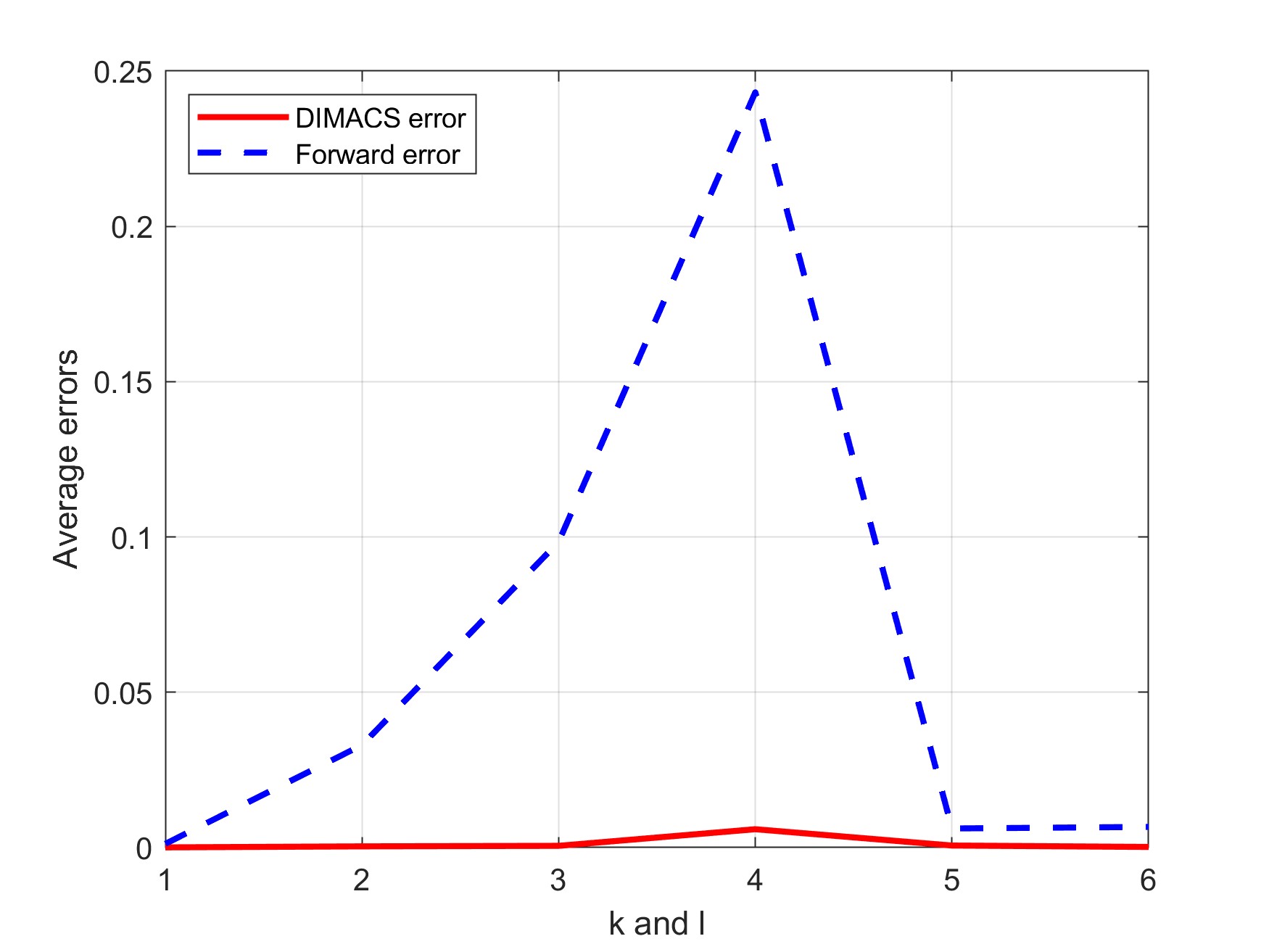}
	\end{minipage}
	\caption{Average errors, $n=10, m=30$: nonSlater (left) and Slater (right).}
	\label{fig:avg-error-10-30}
\end{figure}

\begin{figure}[H]
	\centering
	\begin{minipage}{0.48\textwidth}
		\centering
		\includegraphics[width=\textwidth]{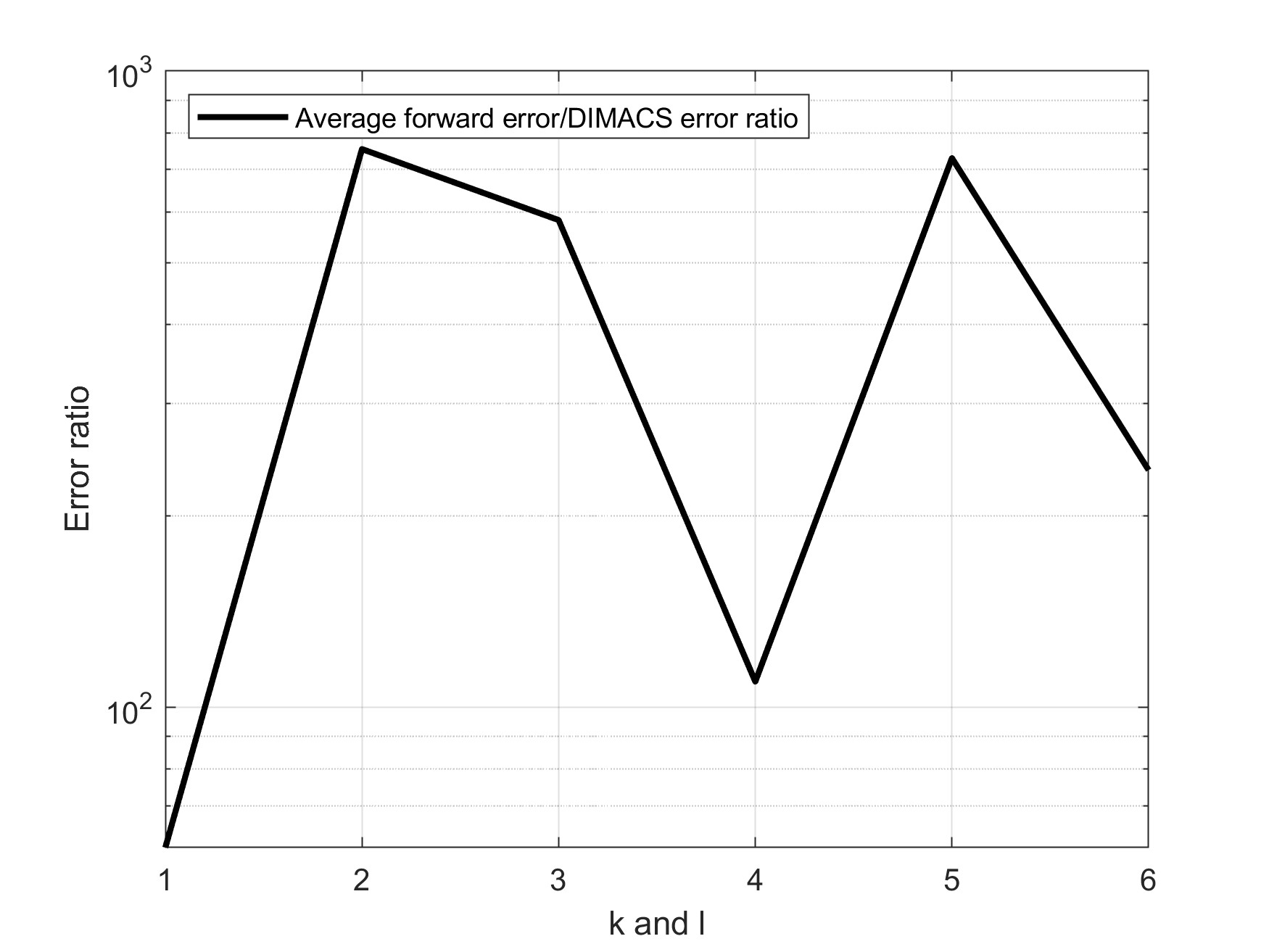}
	\end{minipage}
	\hfill
	\begin{minipage}{0.48\textwidth}
		\centering
		\includegraphics[width=\textwidth]{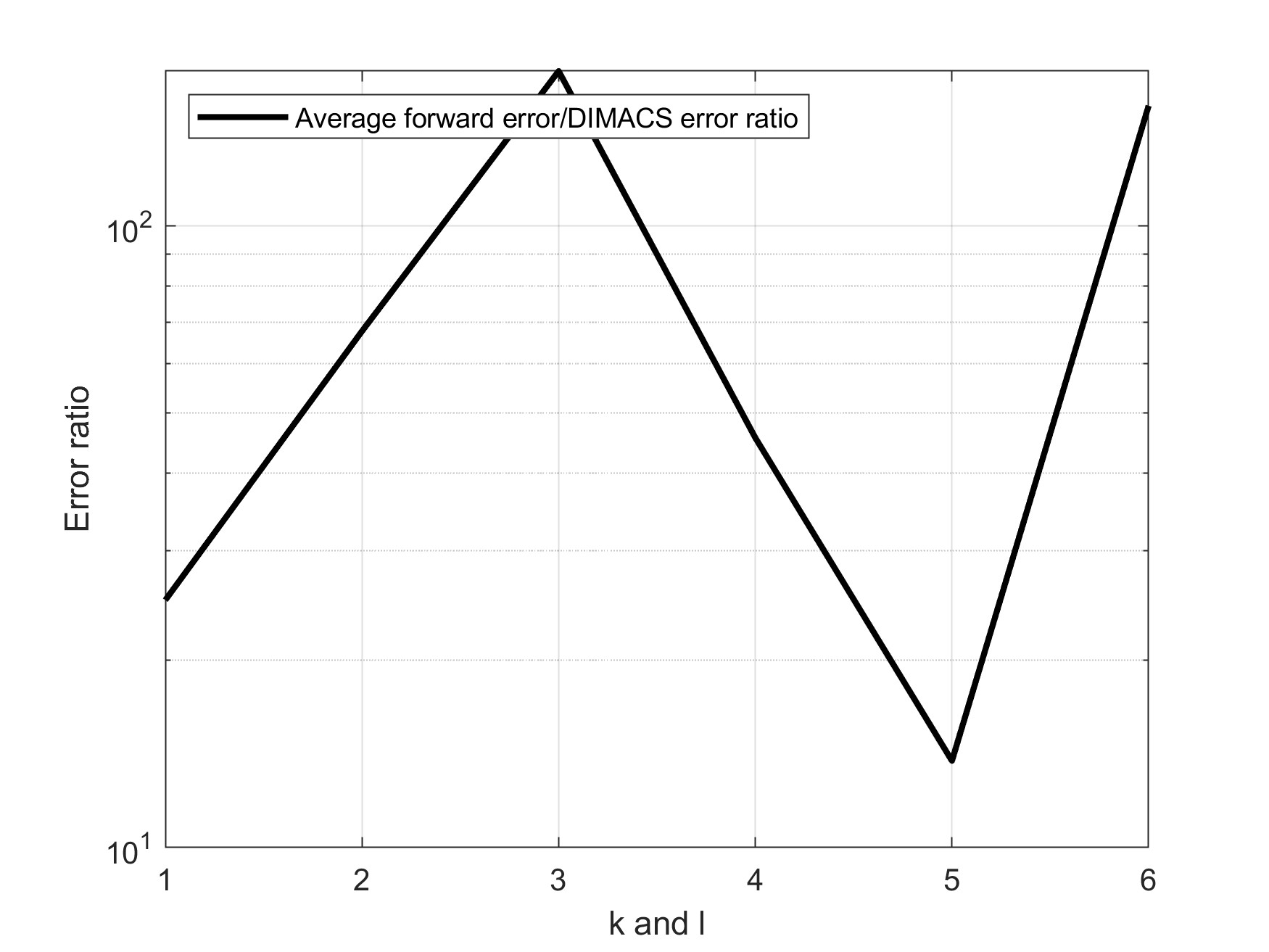}
	\end{minipage}
	\caption{Average ratios, $n=10, m=30$: nonSlater (left) and Slater (right).}
	\label{fig:avg-ratio-10-30}
\end{figure}

\begin{figure}[H]
	\centering
	\begin{minipage}{0.48\textwidth}
		\centering
		\includegraphics[width=\textwidth]{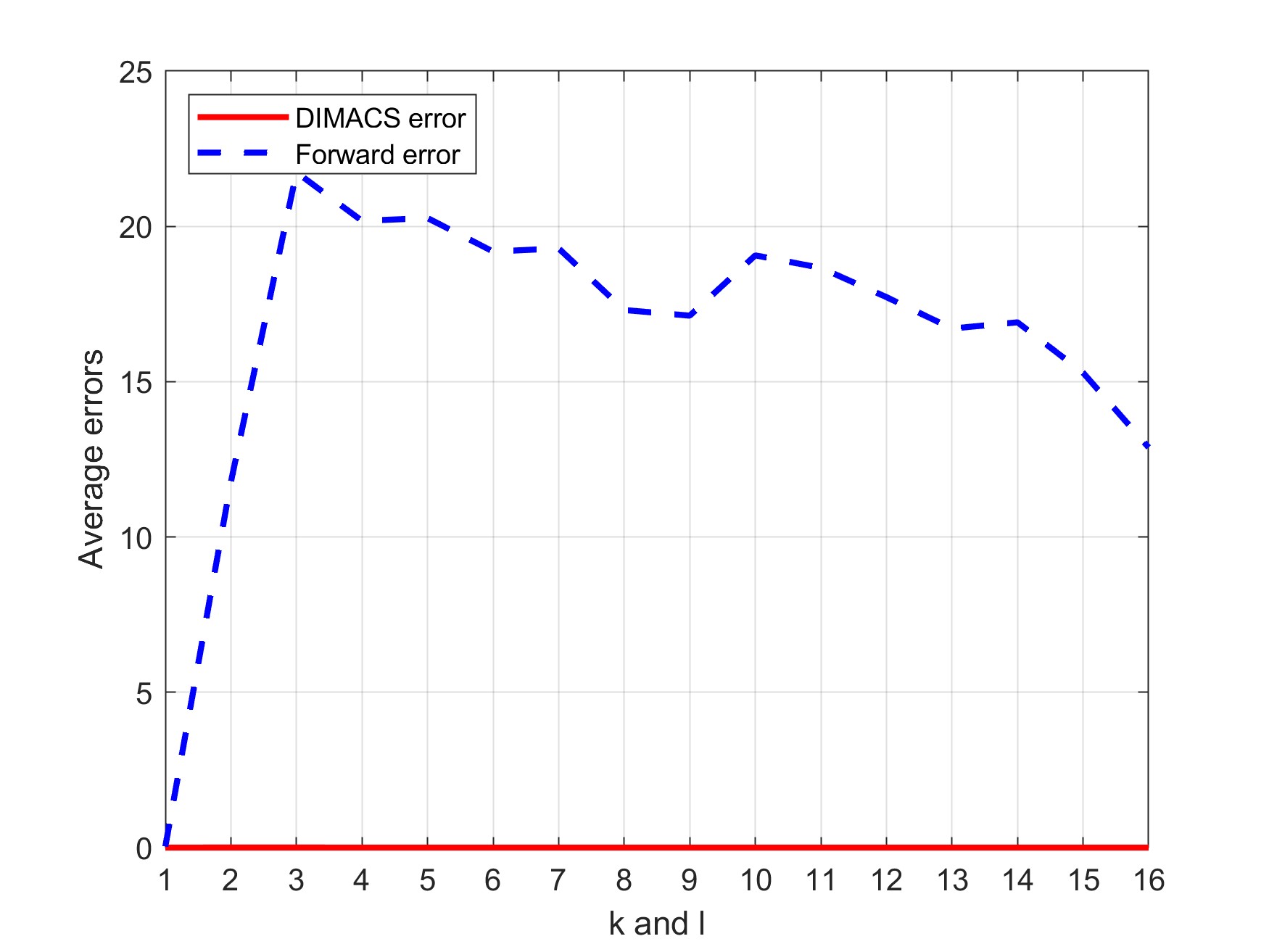}
	\end{minipage}
	\hfill
	\begin{minipage}{0.48\textwidth}
		\centering
		\includegraphics[width=\textwidth]{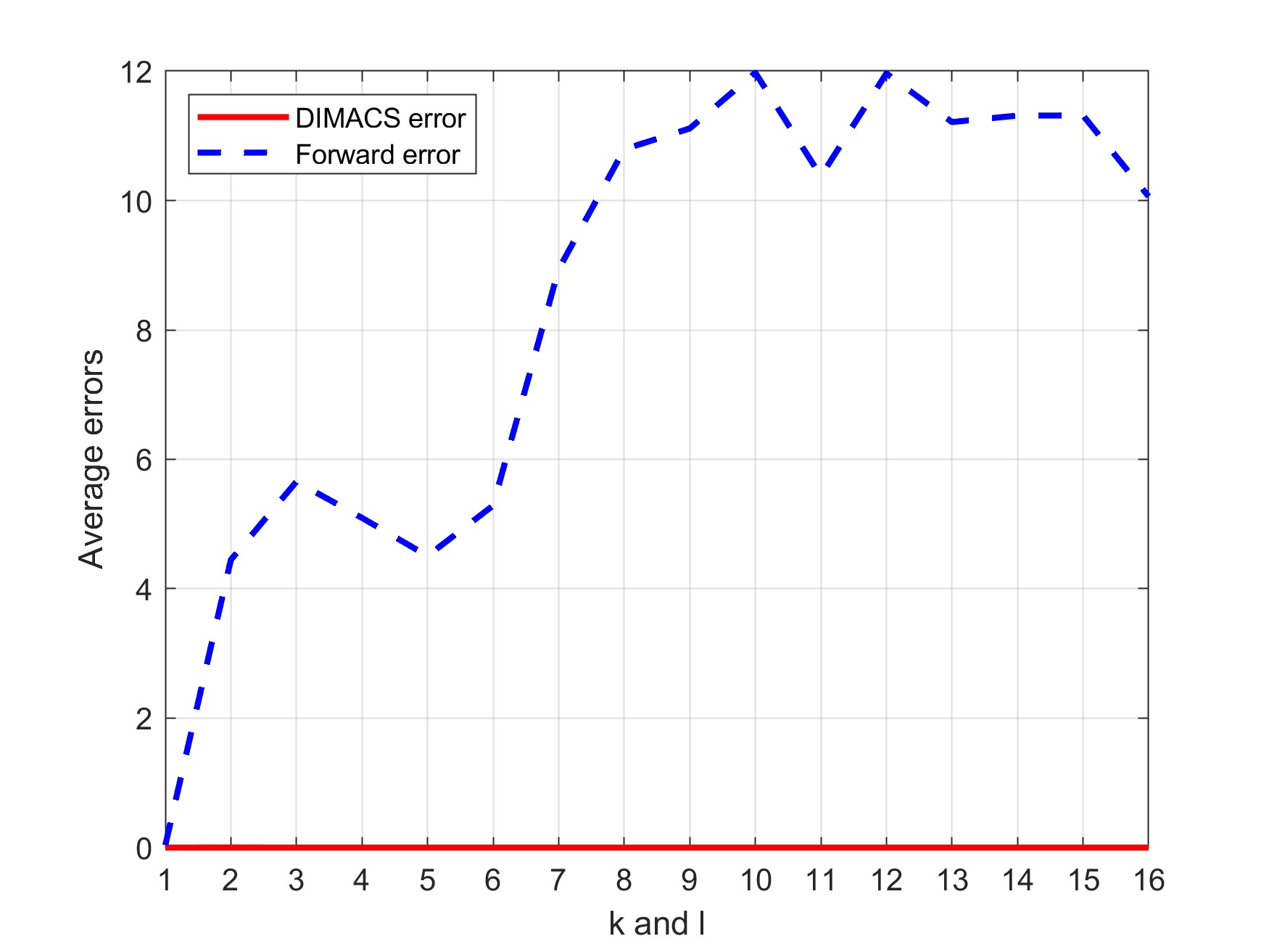}
	\end{minipage}
	\caption{Average errors, $n=20, m=20$: nonSlater (left) and Slater (right).}
	\label{fig:avg-error-20-20}
\end{figure}

\begin{figure}[H]
	\centering
	\begin{minipage}{0.48\textwidth}
		\centering
		\includegraphics[width=\textwidth]{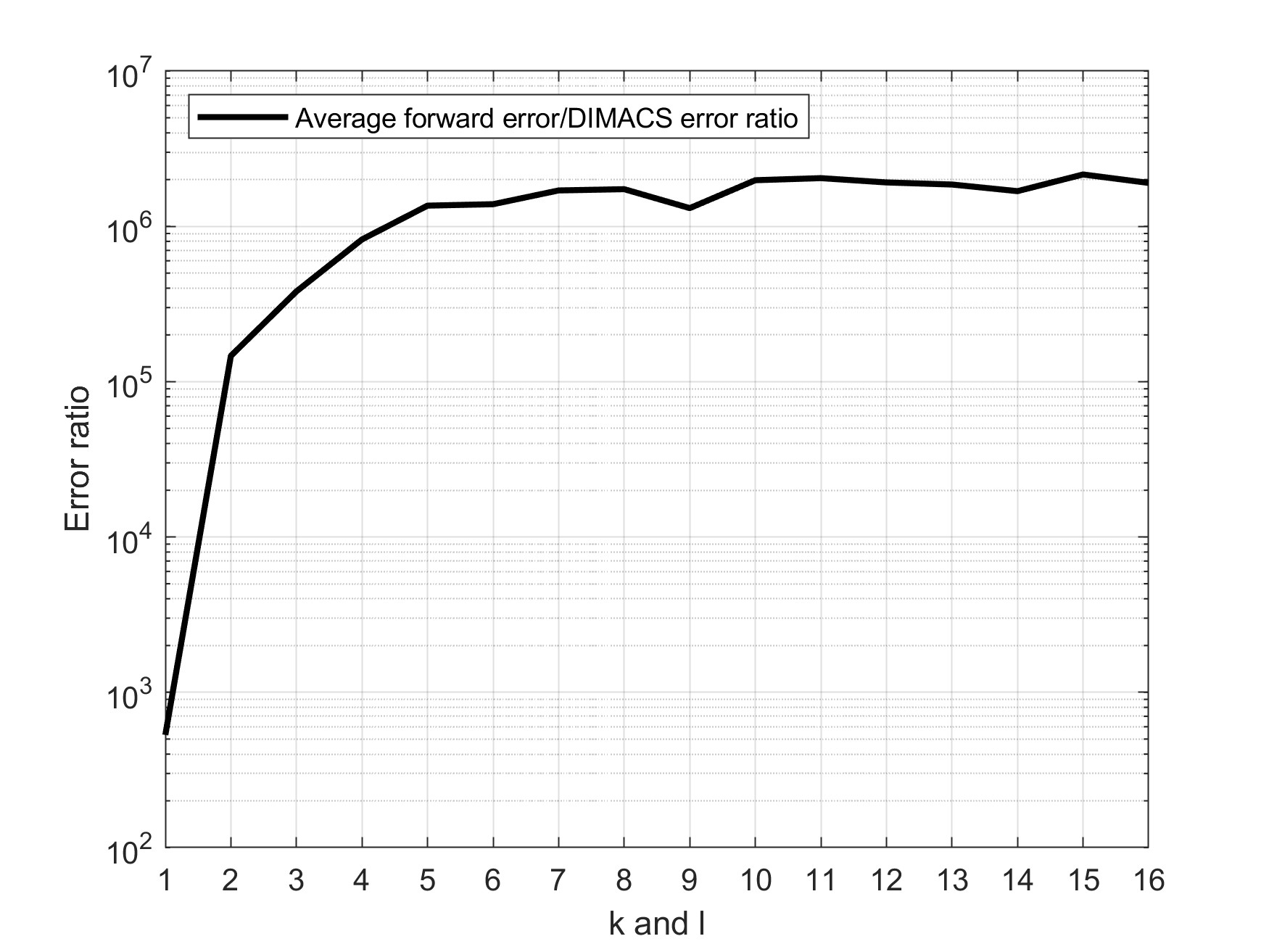}
	\end{minipage}
	\hfill
	\begin{minipage}{0.48\textwidth}
		\centering
		\includegraphics[width=\textwidth]{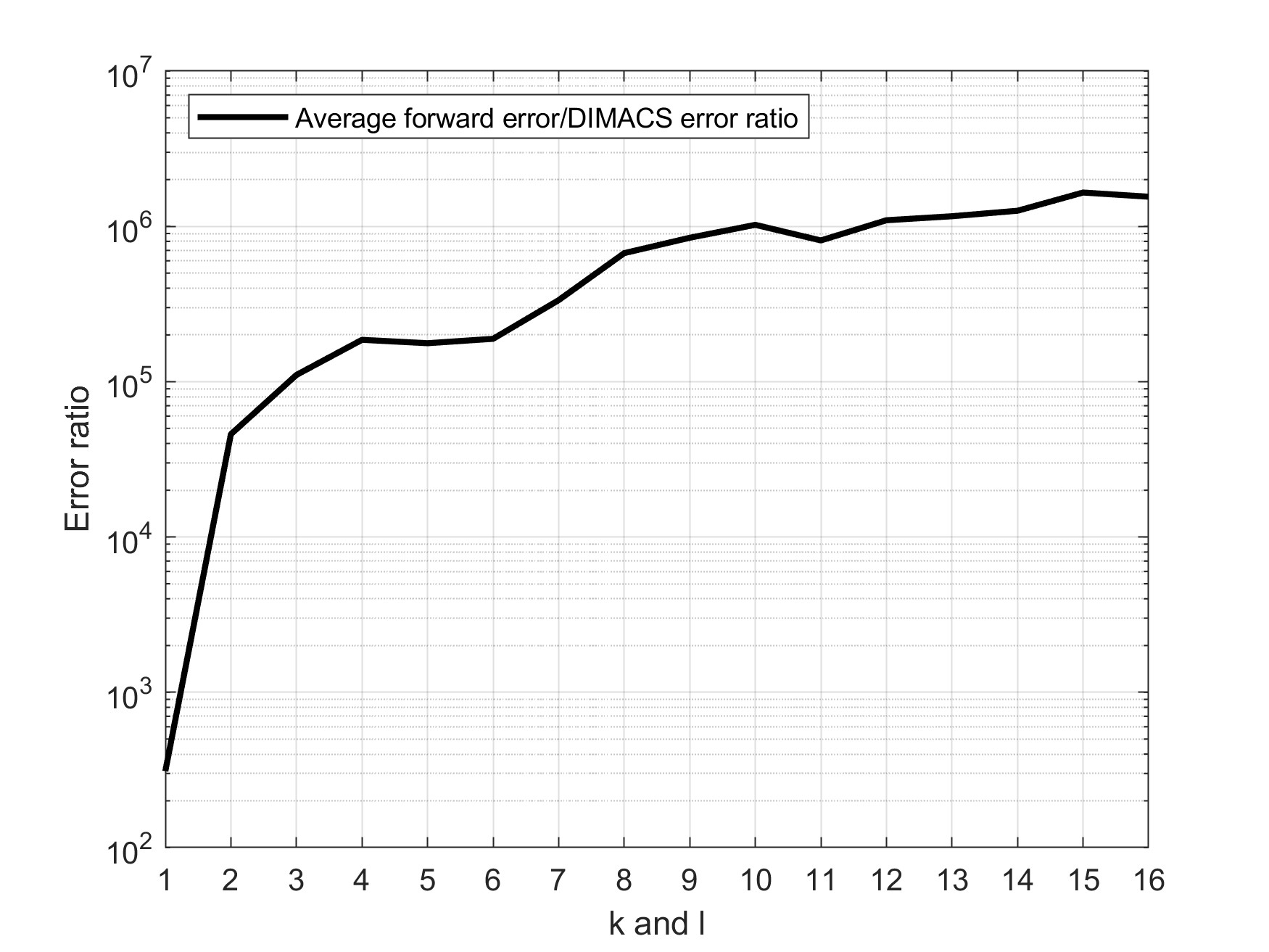}
	\end{minipage}
	\caption{Average ratios, $n=20, m=20$: nonSlater (left) and Slater (right).}
	\label{fig:avg-ratio-20-20}
\end{figure}

\co{
\newpage

\begin{figure}[H]
	\centering
	\begin{minipage}{0.48\textwidth}
		\centering
		\includegraphics[width=\textwidth]{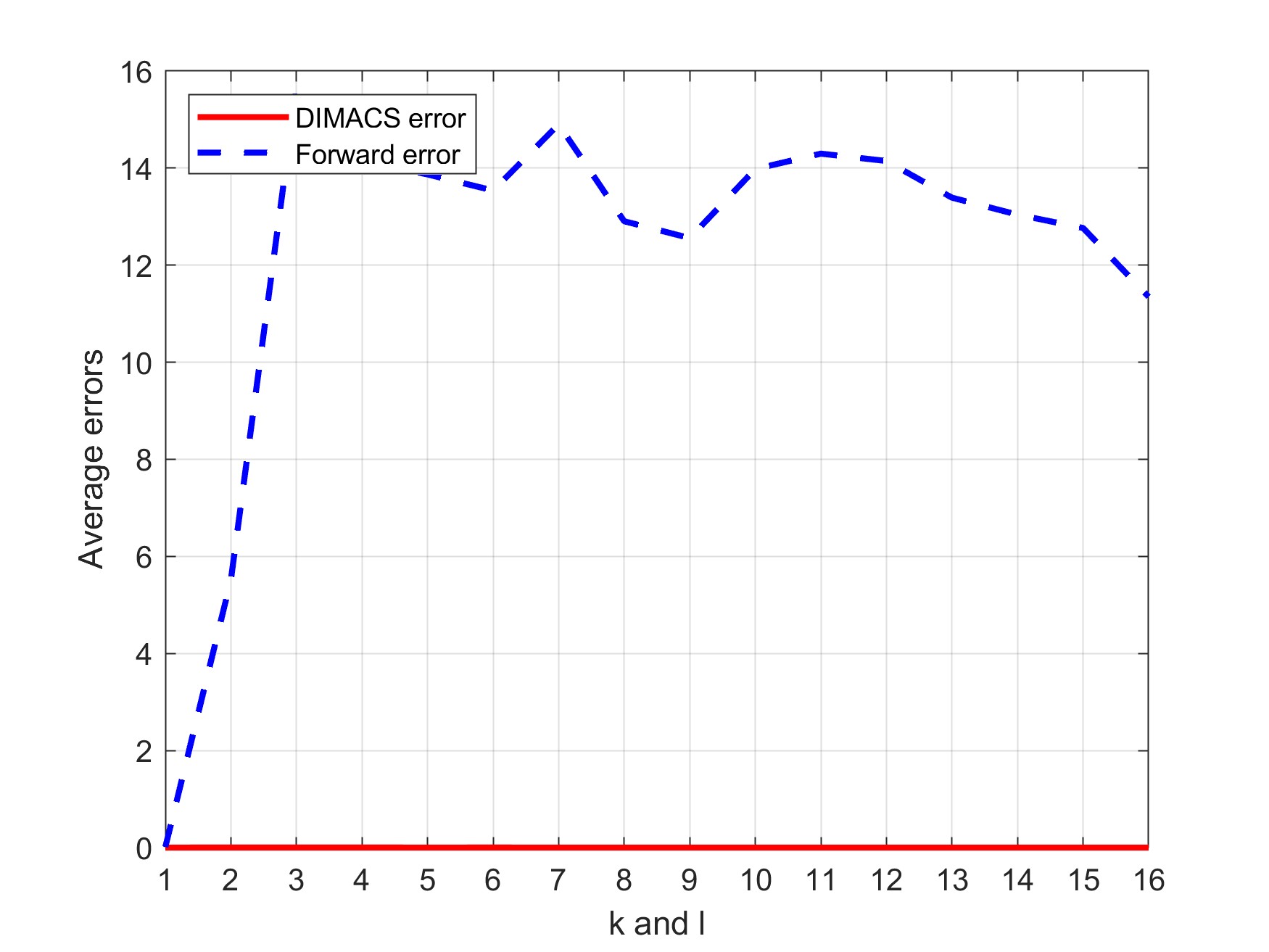}
	\end{minipage}
	\hfill
	\begin{minipage}{0.48\textwidth}
		\centering
		\includegraphics[width=\textwidth]{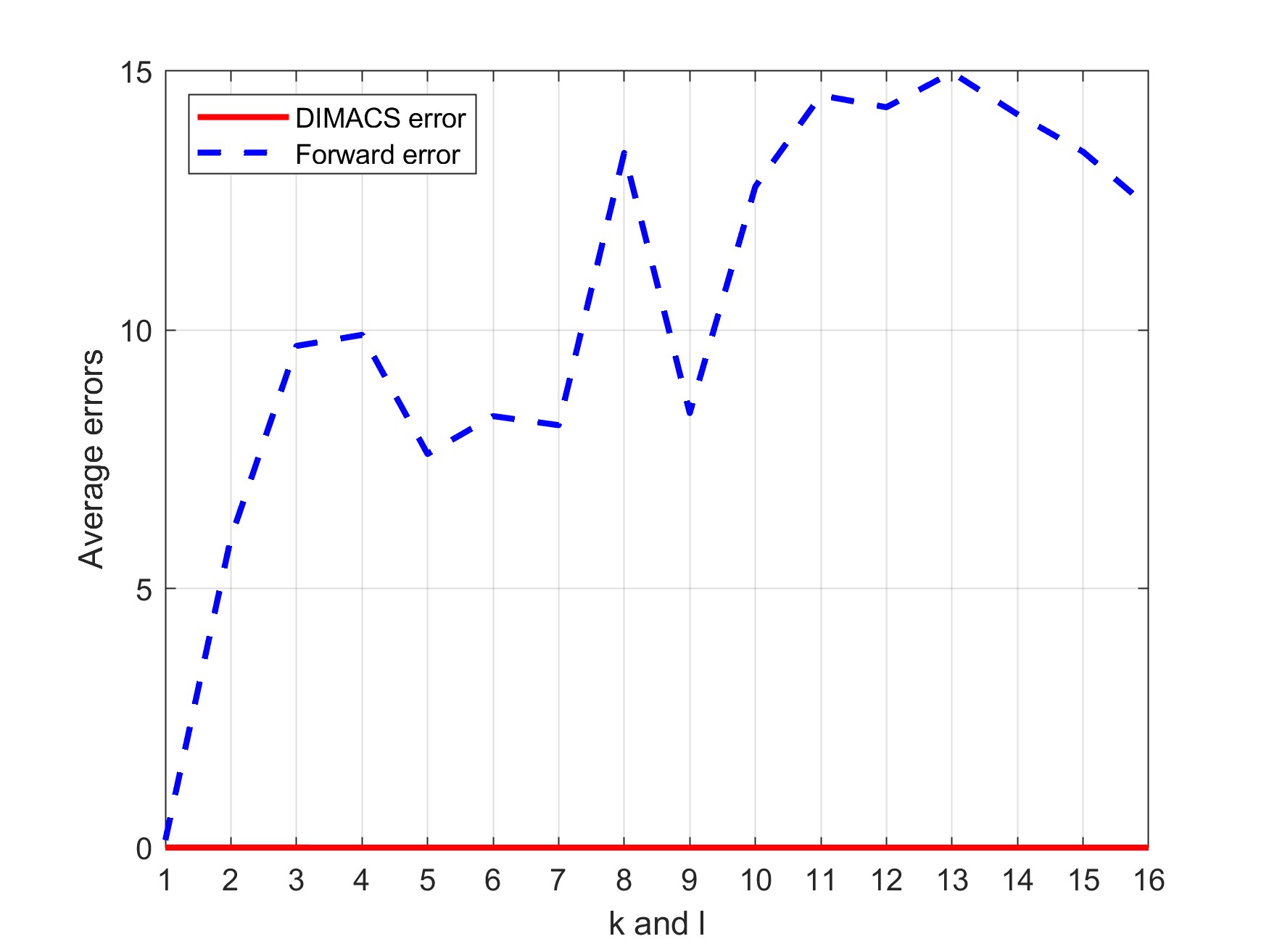}
	\end{minipage}
	\caption{Average errors, $n=20, m=30$: nonSlater (left) and Slater (right).}
	\label{fig:avg-error-20-30}
\end{figure}

\begin{figure}[H]
	\centering
	\begin{minipage}{0.48\textwidth}
		\centering
		\includegraphics[width=\textwidth]{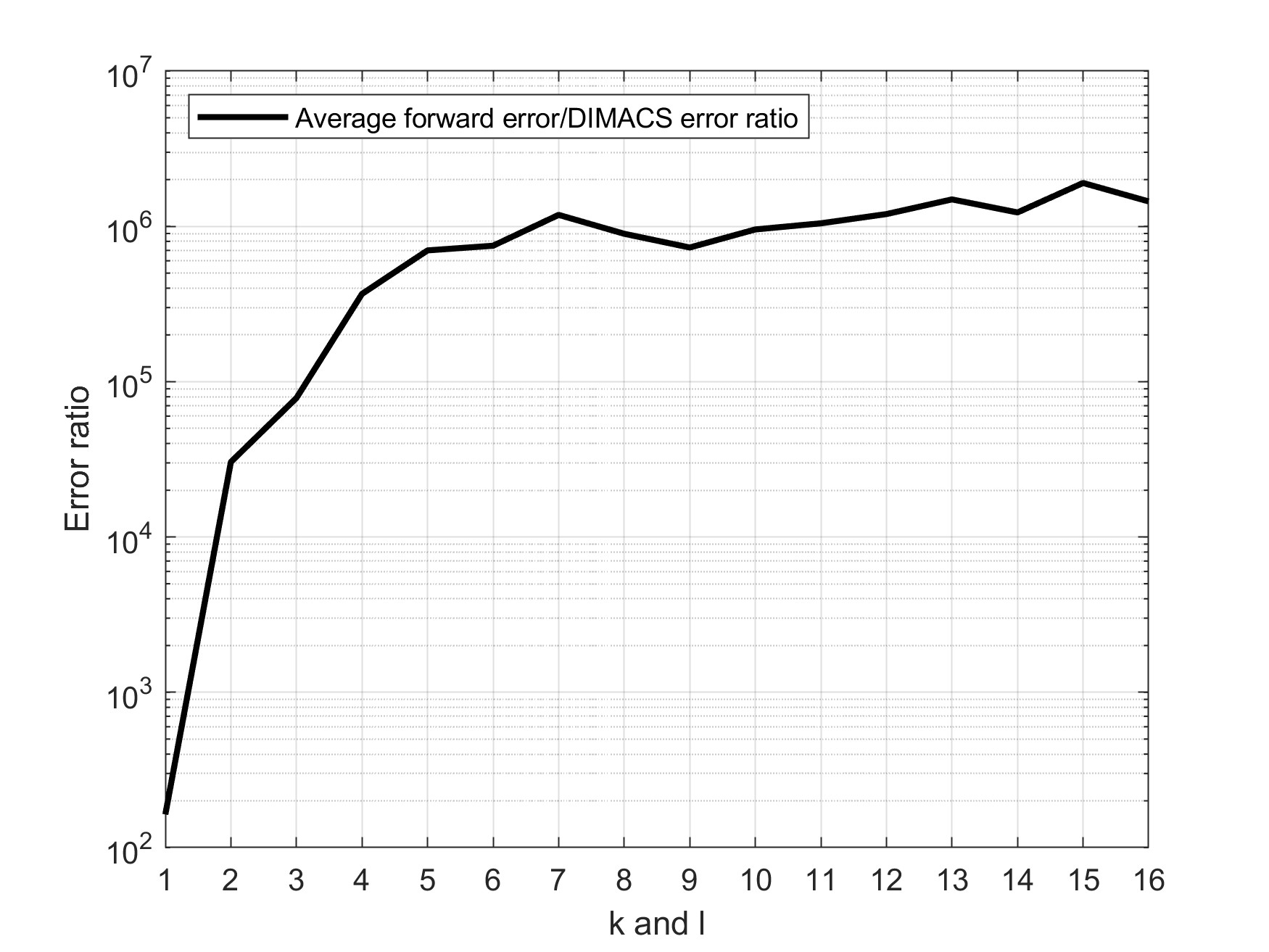}
	\end{minipage}
	\hfill
	\begin{minipage}{0.48\textwidth}
		\centering
		\includegraphics[width=\textwidth]{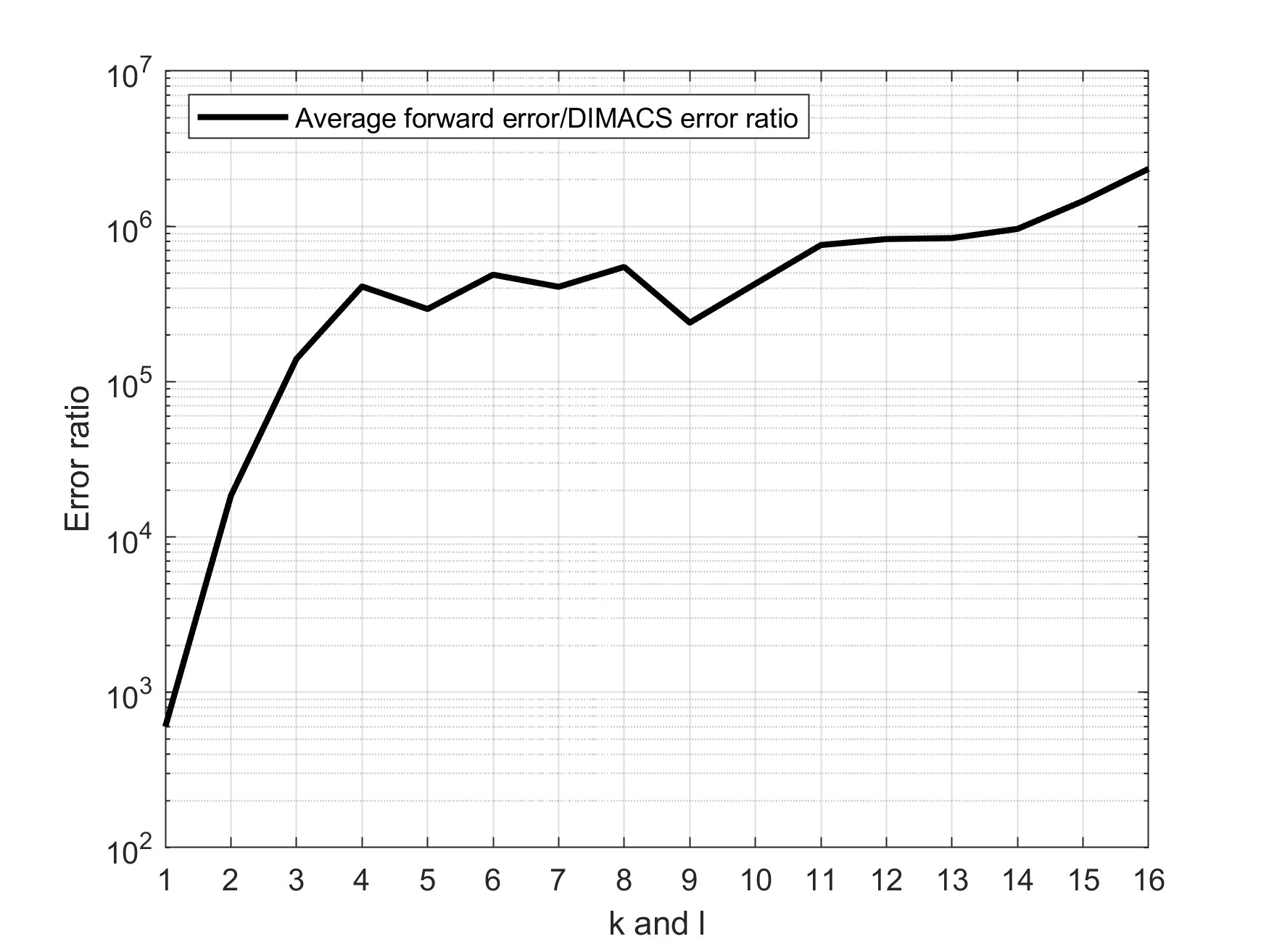}
	\end{minipage}
	\caption{Average ratios, $n=20, m=30$: nonSlater (left) and Slater (right).}
	\label{fig:avg-ratio-20-30}
\end{figure}
}

\begin{figure}[H]
	\centering
	\begin{minipage}{0.48\textwidth}
		\centering
		\includegraphics[width=\textwidth]{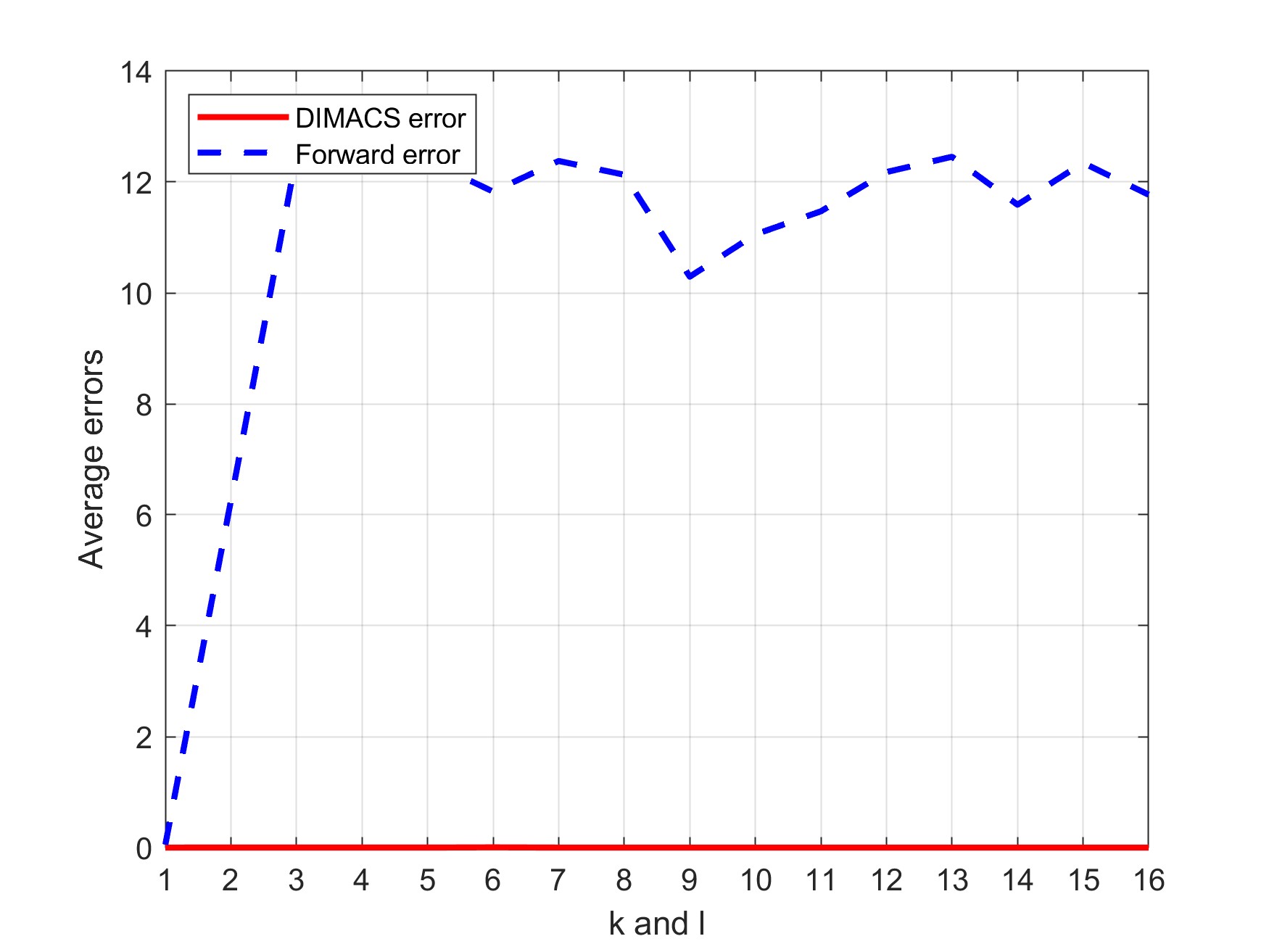}
	\end{minipage}
	\hfill
	\begin{minipage}{0.48\textwidth}
		\centering
		\includegraphics[width=\textwidth]{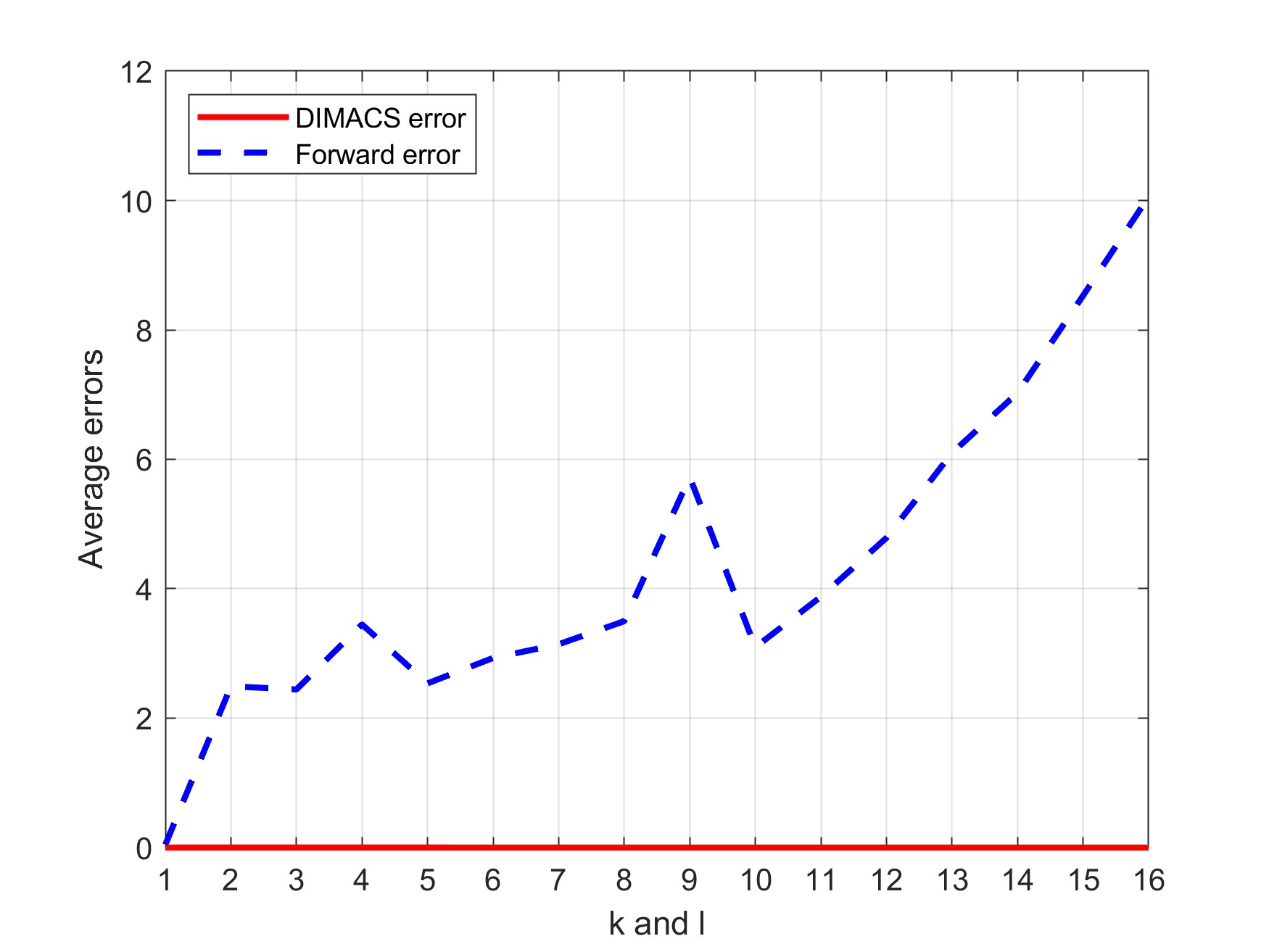}
	\end{minipage}
	\caption{Average errors, $n=20, m=40$: nonSlater (left) and Slater (right).}
	\label{fig:avg-error-20-40}
\end{figure}

\begin{figure}[H]
	\centering
	\begin{minipage}{0.48\textwidth}
		\centering
		\includegraphics[width=\textwidth]{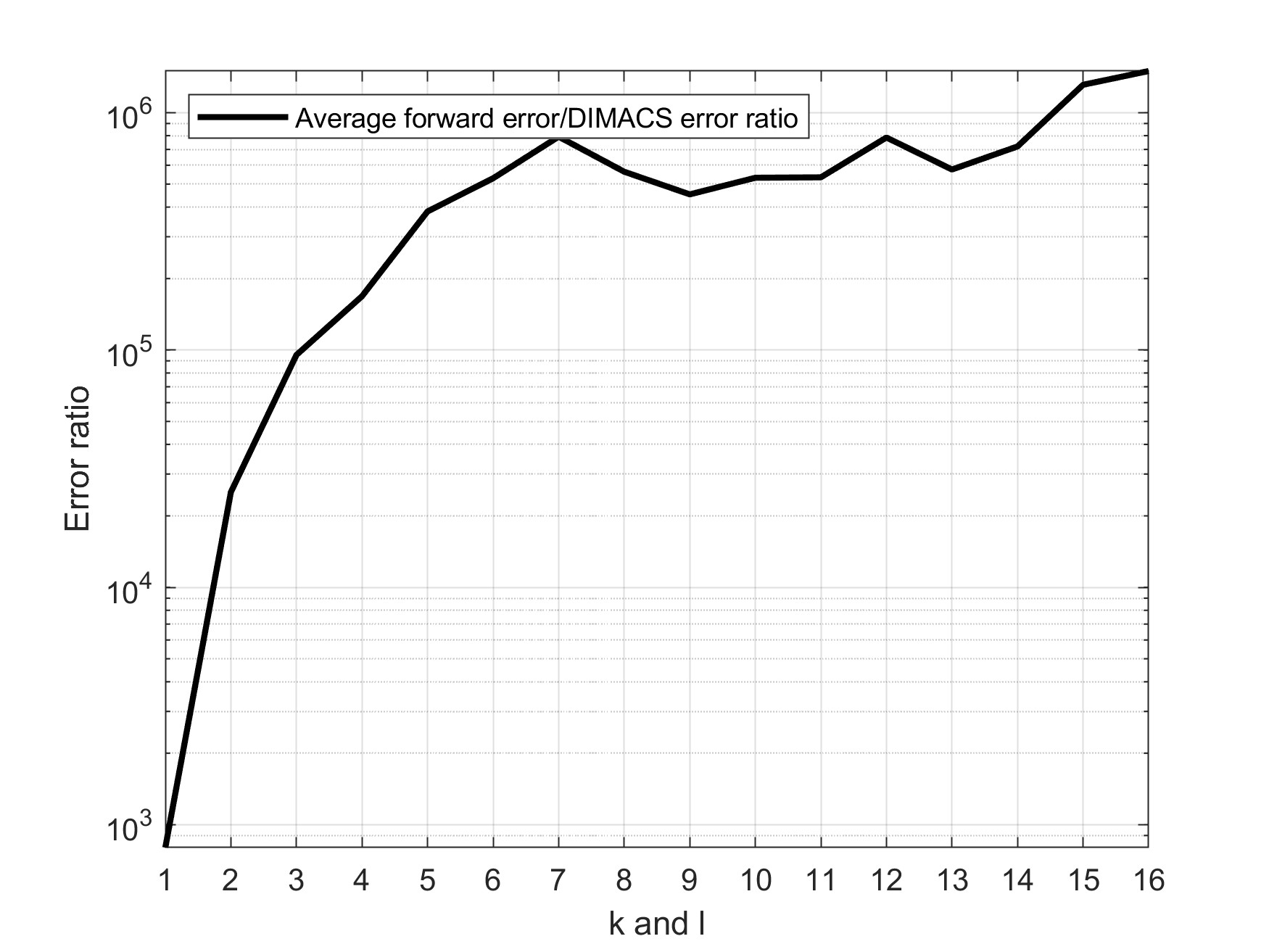}
	\end{minipage}
	\hfill
	\begin{minipage}{0.48\textwidth}
		\centering
		\includegraphics[width=\textwidth]{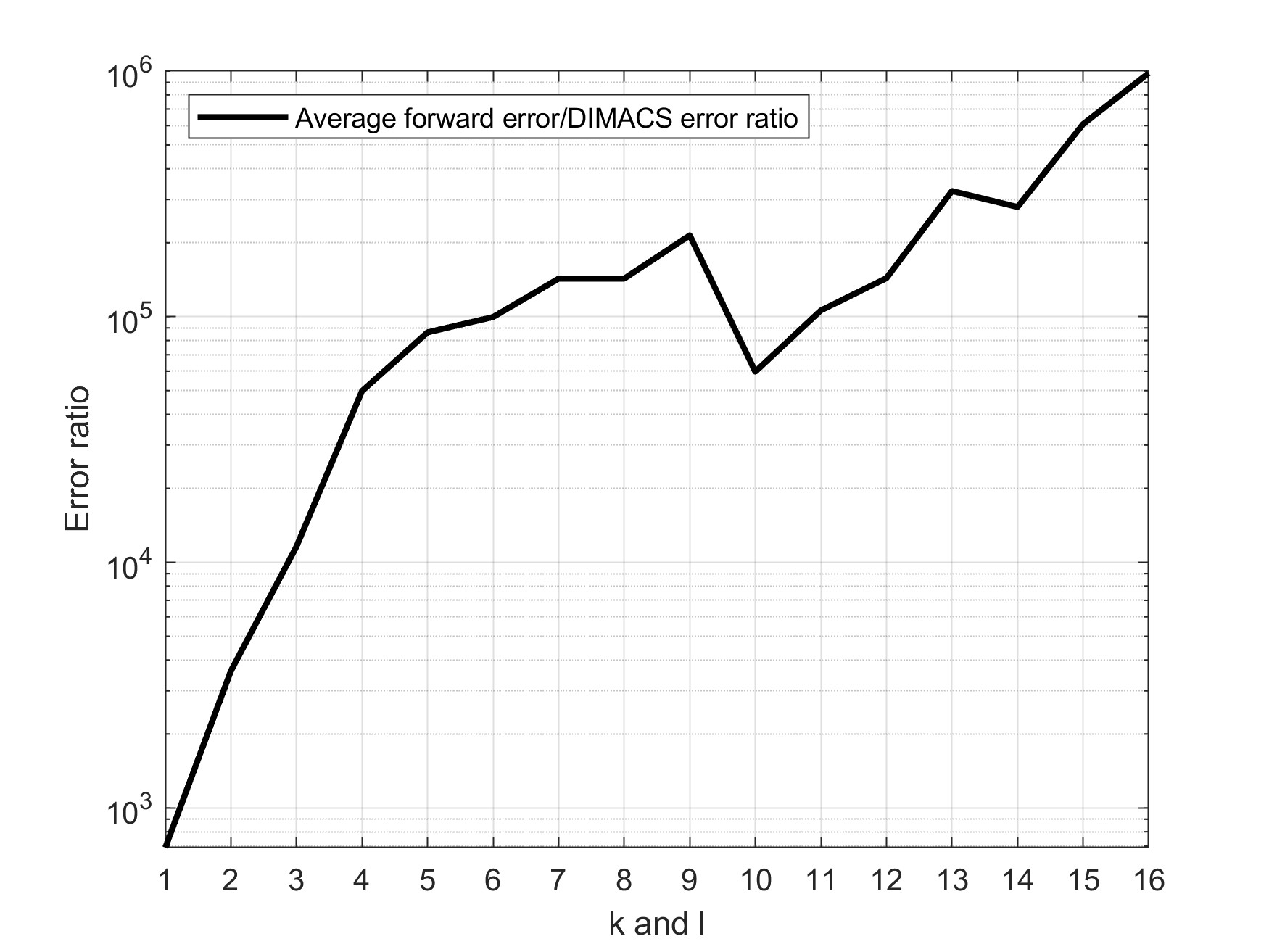}
	\end{minipage}
	\caption{Average ratios, $n=20, m=40$: nonSlater (left) and Slater (right).}
	\label{fig:avg-ratio-20-40}
\end{figure}

\co{
\newpage

\begin{figure}[H]
	\centering
	\begin{minipage}{0.48\textwidth}
		\centering
		\includegraphics[width=\textwidth]{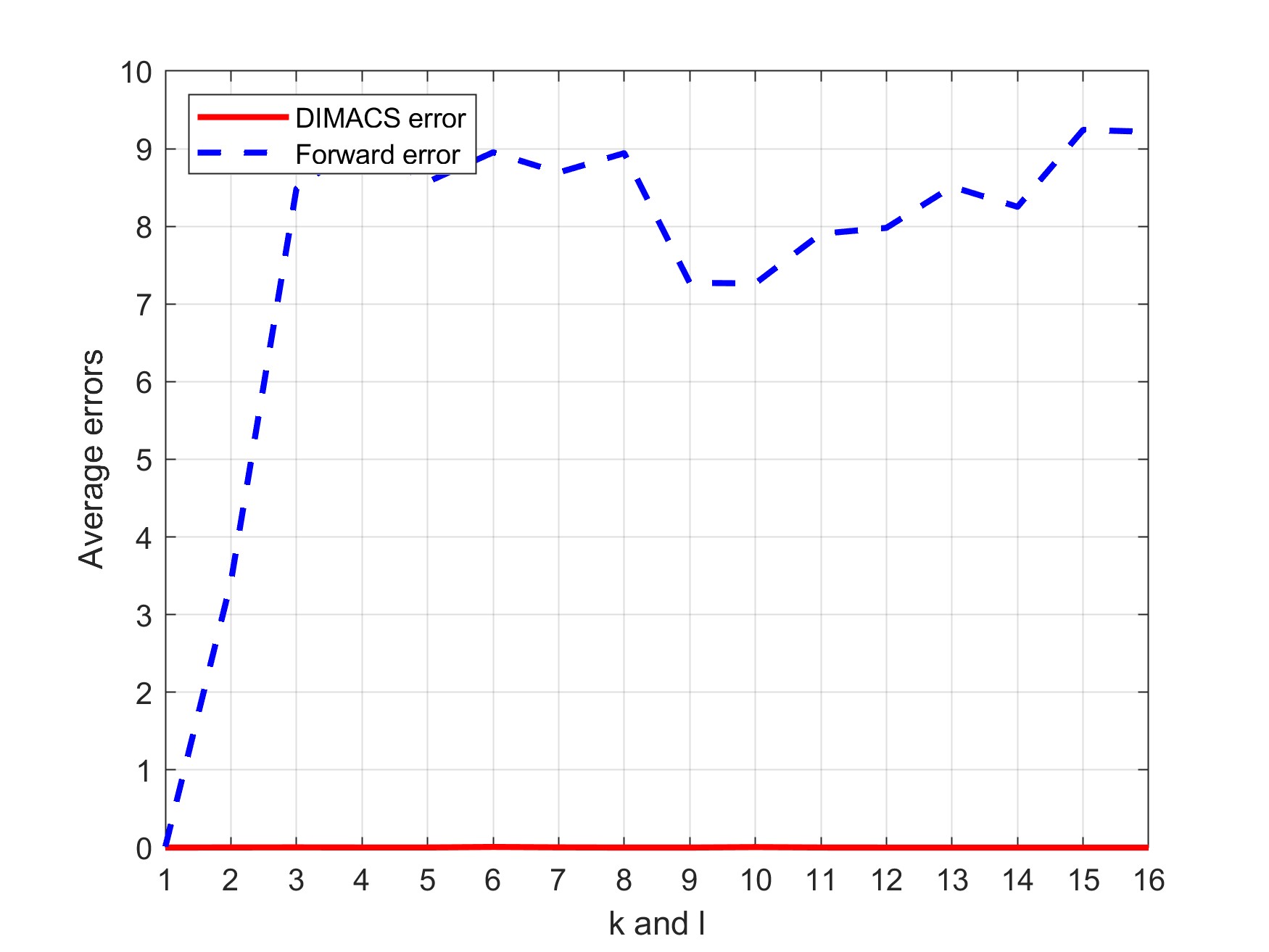}
	\end{minipage}
	\hfill
	\begin{minipage}{0.48\textwidth}
		\centering
		\includegraphics[width=\textwidth]{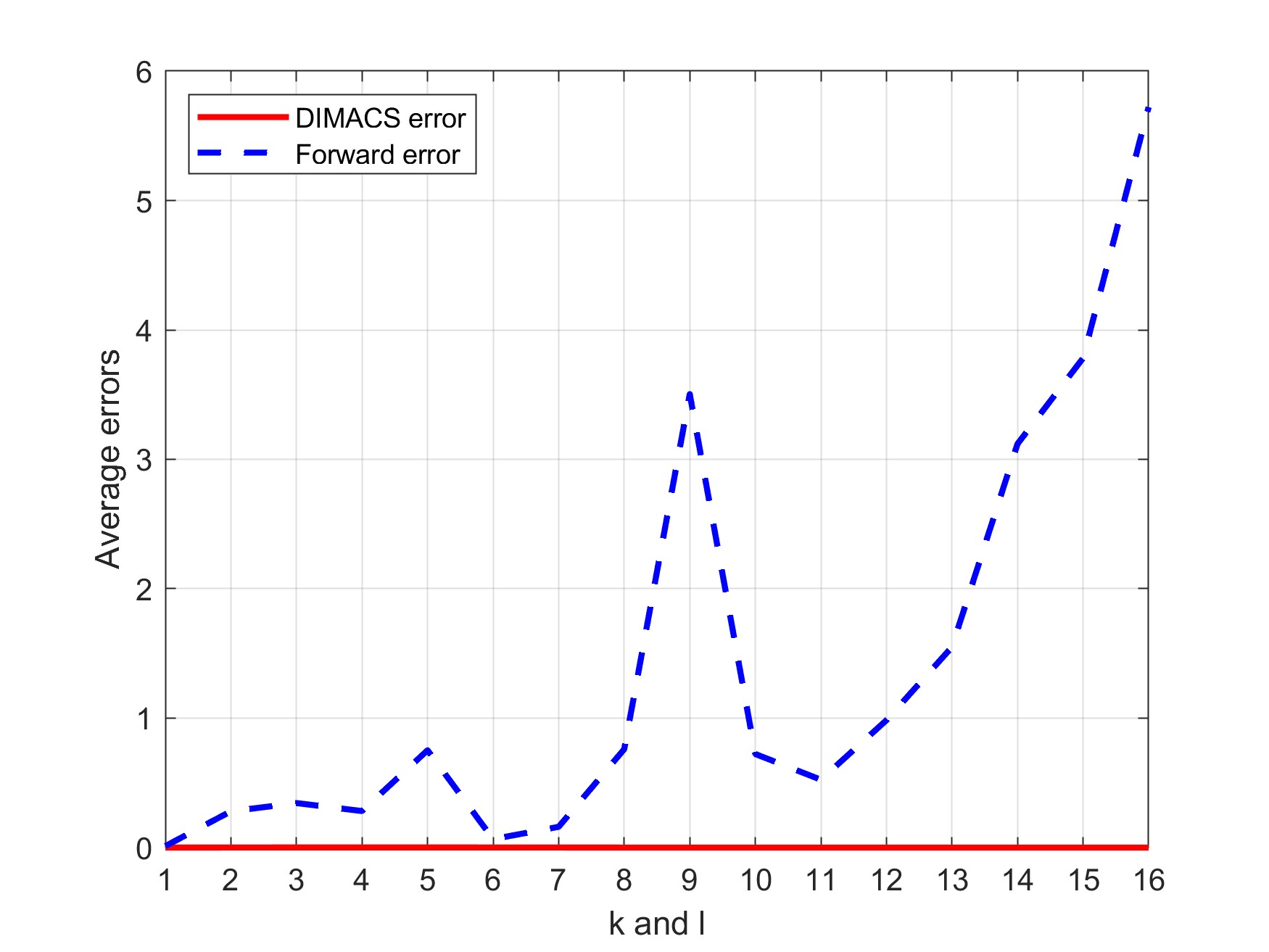}
	\end{minipage}
	\caption{Average errors, $n=20, m=50$: nonSlater (left) and Slater (right).}
	\label{fig:avg-error-20-50}
\end{figure}

\begin{figure}[H]
	\centering
	\begin{minipage}{0.48\textwidth}
		\centering
		\includegraphics[width=\textwidth]{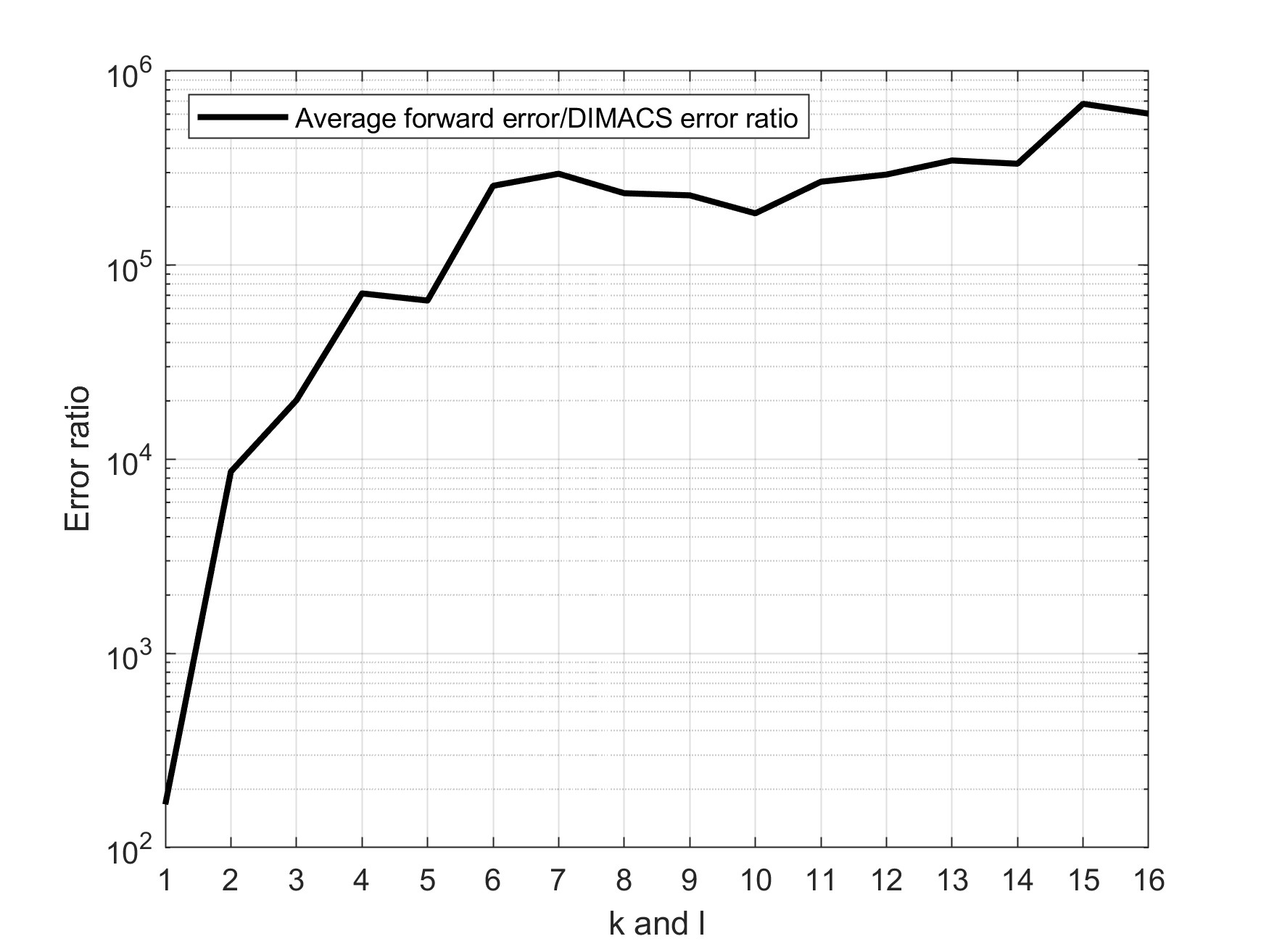}
	\end{minipage}
	\hfill
	\begin{minipage}{0.48\textwidth}
		\centering
		\includegraphics[width=\textwidth]{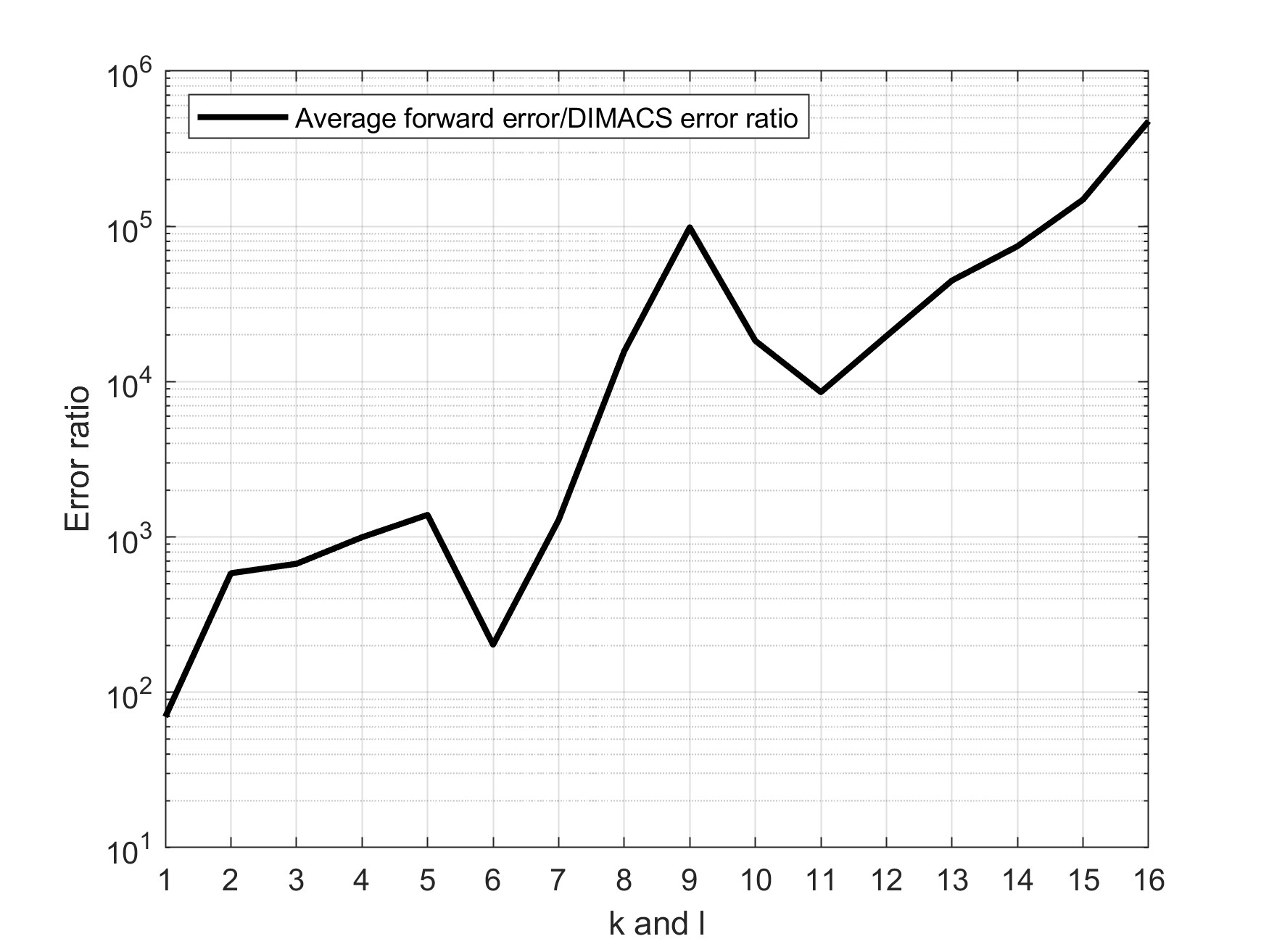}
	\end{minipage}
	\caption{Average ratios, $n=20, m=50$: nonSlater (left) and Slater (right).}
	\label{fig:avg-ratio-20-50}
\end{figure}
}

\begin{figure}[H]
	\centering
	\begin{minipage}{0.48\textwidth}
		\centering
		\includegraphics[width=\textwidth]{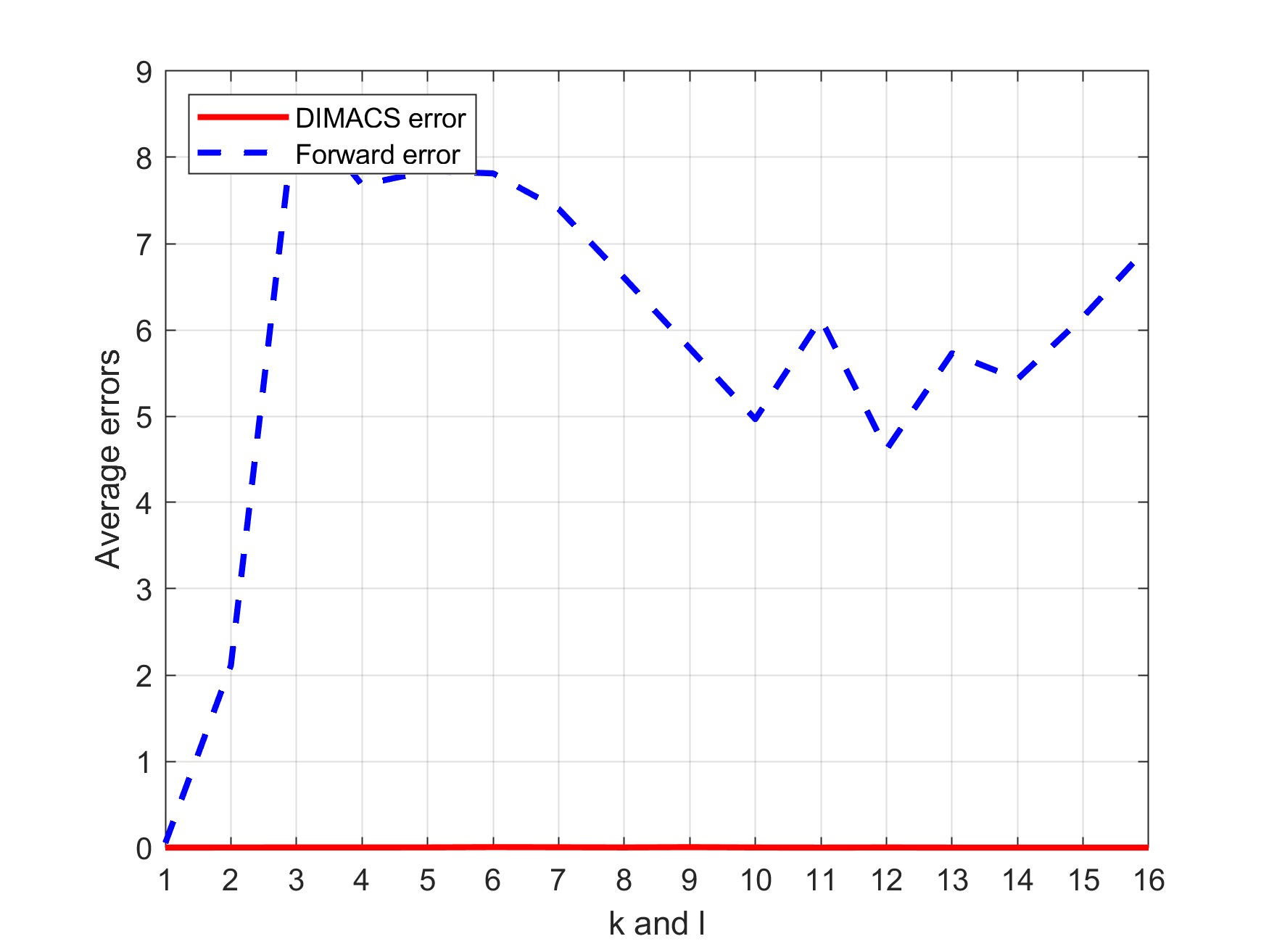}
	\end{minipage}
	\hfill
	\begin{minipage}{0.48\textwidth}
		\centering
		\includegraphics[width=\textwidth]{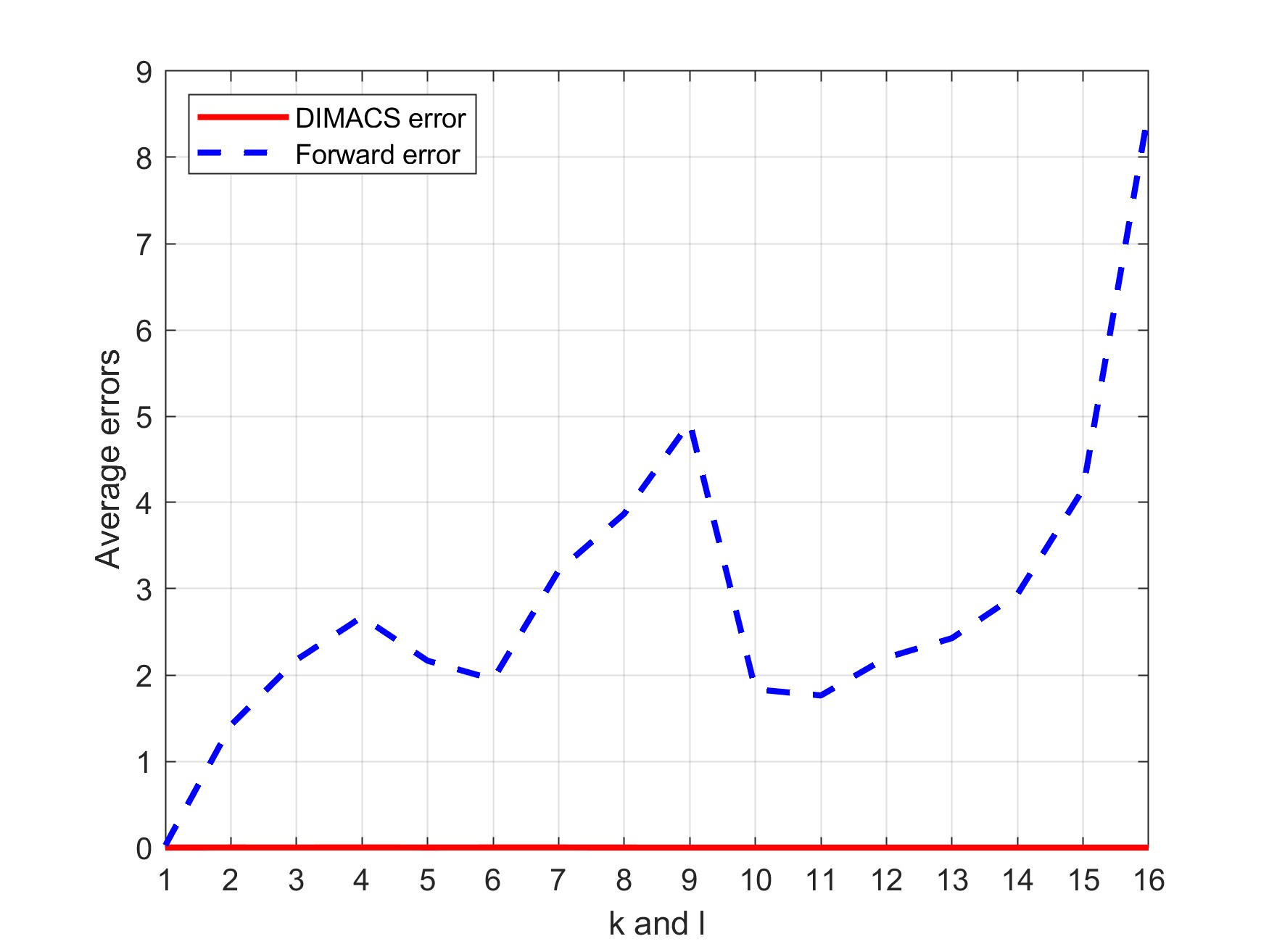}
	\end{minipage}
	\caption{Average errors, $n=20, m=60$: nonSlater (left) and Slater (right).}
	\label{fig:avg-error-20-60}
\end{figure}

\begin{figure}[H]
	\centering
	\begin{minipage}{0.48\textwidth}
		\centering
		\includegraphics[width=\textwidth]{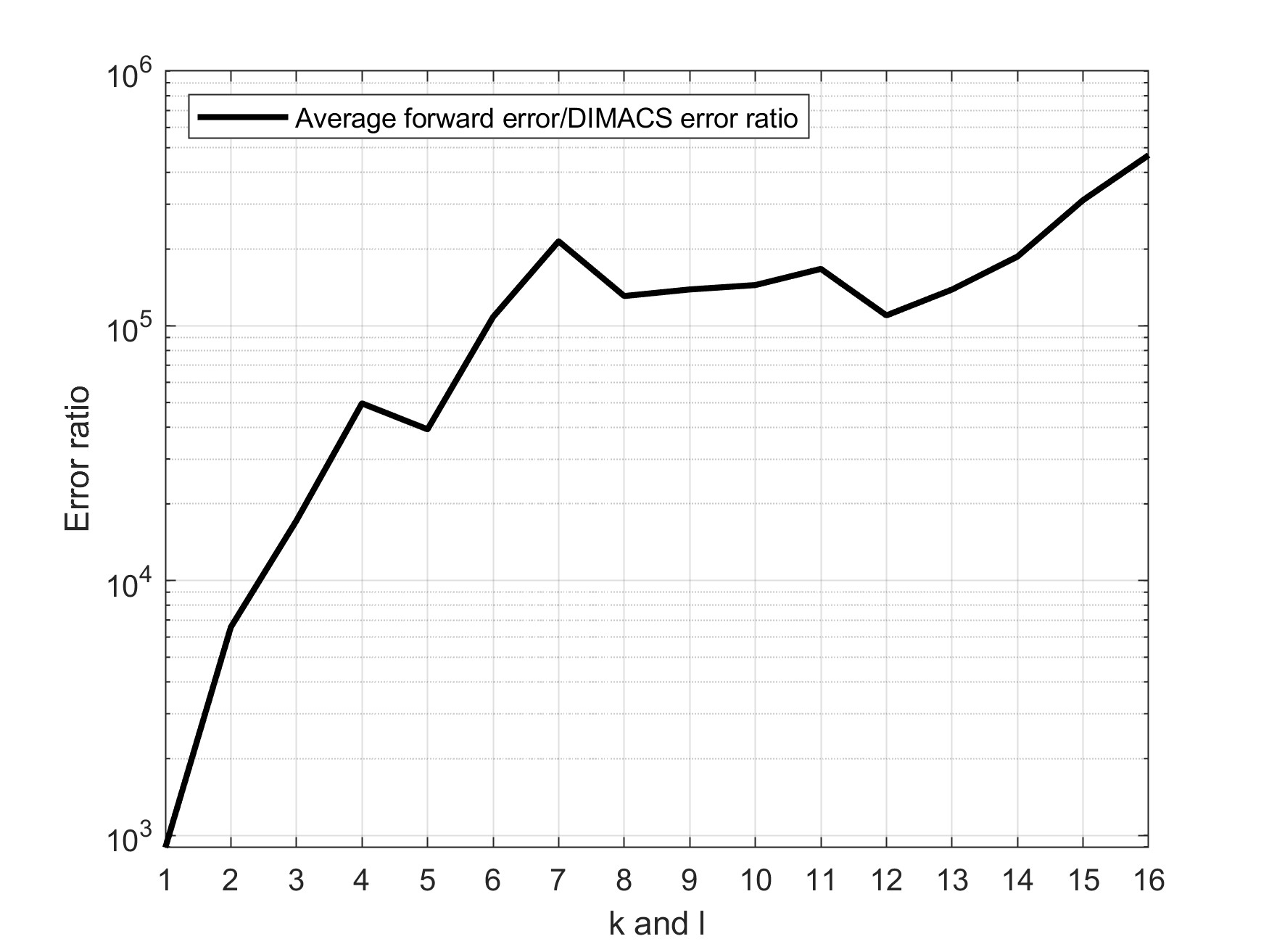}
	\end{minipage}
	\hfill
	\begin{minipage}{0.48\textwidth}
		\centering
		\includegraphics[width=\textwidth]{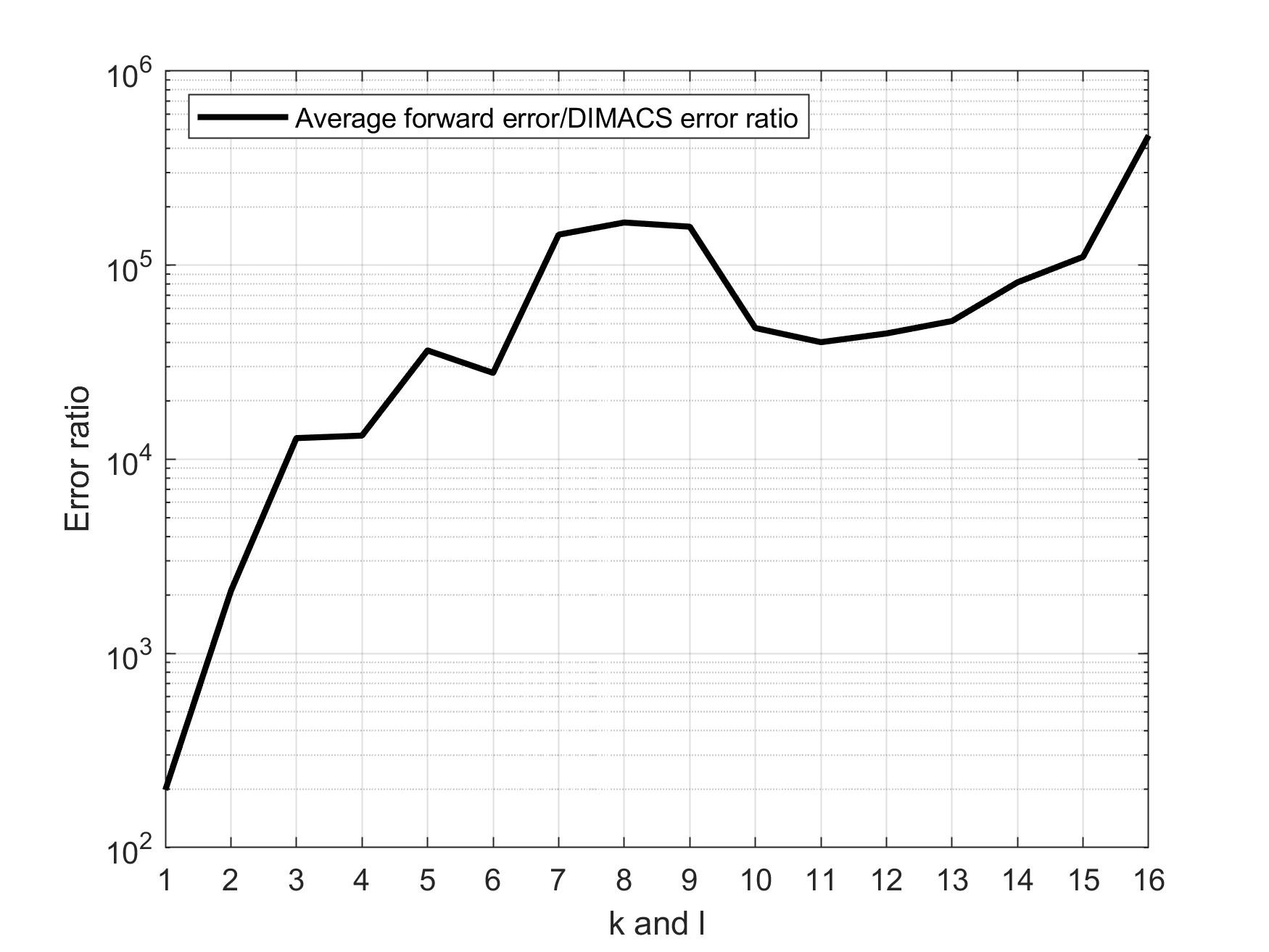}
	\end{minipage}
	\caption{Average ratios, $n=20, m=60$: nonSlater (left) and Slater (right).}
	\label{fig:avg-ratio-20-60}
\end{figure}

\subsection{Sturm structure}
\label{subsection-computational-results-Sturm}

Here we present our results with variations of Sturm's instance, which is Example 2 in \cite{Sturm:00}.
The version with $n=4$ has been studied in Examples \ref{example-sturm}, \ref{example-sturm-2}, \ref{example-sturm-3}, and 
\ref{example-sturm-4}.

To describe these instances for general $n, \,$ we need a definition. For a fixed $n$ we denote by $E_{ij}$ the $(i,j)$th symmetric unit matrix, in which the 
$(i,j)$ and $(j,i)$ elements are $1$ and all other elements are zero. The value of $n$ will always be clear from the context.

Then we consider the instance
\begin{equation}\label{Sturm}  
	\begin{array}{rl} 
		\inf  & \,\, \la E_{11}, X    \ra  \\
		s.t. & \,\, \la A_i, X \ra \, =  0  \; \text{for} \, i=1, \dots, n-2  \\
		& \,\, X \succeq 0, 
	\end{array} \tag{$Sturm$} 
\end{equation}
where for all $i$ we have
\begin{equation} \label{eqn-Sturm-define-Ai} 
A_i = 2 E_{i+1,i+1} - E_{i,n}. 
\end{equation} 
\co{
First we examine whether \eqref{Sturm} and its dual are Slater.
As to \eqref{Sturm}, the answer is "yes": first we  set $x_{22}, x_{33}, \dots, x_{n-1,n-1}$ to be positive; then 
 the last row and column of 
$X$ to make sure the constraints are satisfied; and $x_{11}$ and $x_{nn}$ large enough to ensure positive definiteness.
However, the dual is not Slater: in any  $Z$ which is  feasible in it, the $(n,n)$ element is zero, hence all $y_i$ must be zero.
}
A calculation, similar to the one we did in Example \ref{example-sturm}
shows  that in a primal optimal solution $X$ all elements, except possibly in the $(n,n)$ position, are zero.
Also, in a dual optimal solution $y=0.$

So a pair of optimal solutions is 
\begin{equation} \label{eqn-Xstar-Zstar-Sturm} 
	X^* = \alpha E_{n,n}, \, Z^* = E_{11}, \, \text{where} \, \alpha \geq 0.
\end{equation}
We see that $(Z^*, A_1, \dots, A_{n-2})$ is a regular facial reduction sequence that certifies maximum rank of $X^*.$
To certify that $Z^*$ is indeed a maximum rank optimal dual solution, we set 
\begin{equation} \label{eqn-Sturm-define-Yj} 
Y_j = E_{n-j, n-j}  + E_{n-j-1,n} \,\, \text{for} \, j=1, \dots, n-2. 
\end{equation}
We see that all the $Y_j$ and also $X^*$ have zero inner product with all the $A_i$ and $C = E_{11}.$ 
Thus, they also have zero inner product with any $Z$ feasible in the dual of \eqref{Sturm}.
Hence $(X^*, Y_1, \dots, Y_{n-2})$ is a regular facial reduction sequence, which hence certifies maximum rank of 
$Z^*.$ 

Thus, \eqref{Sturm} with the $Y_j$ is in the normal form presented in Theorem \ref{thm-main}.

We study this instance in detail, since it is classical: it   
 shows that Sturm's error bound \eqref{eqn-sturm-bound} is best possible.
 Indeed, let us consider 
 the feasible set of \eqref{Sturm} with the equation $\la E_{11}, X \ra =0$ attached.
  On the one hand, Theorem \ref{thm-singularity-degree} implies that 
 the singularity degree of this system is exactly $n-1.$ 
 On the other hand, Sturm \cite{Sturm:00} constructed an approximate solution with  $\epsilon$ constraint violation, 
 and a forward error of $\epsilon^{1/2^{n-1}}.$ 
 
The questions we address are:
\begin{itemize}
	\item Suppose we 
	  add equations to \eqref{Sturm}, while making sure that 
	  \begin{itemize}
	  	\item  $X^*$ and $Z^*$ given in \eqref{eqn-Xstar-Zstar-Sturm} remain 
	  	maximum rank optimal solutions;
	  	\item and the same regular facial reduction sequences certify their maximum rank.
	  \end{itemize}
	   How does this affect the huge gap between backward and  forward errors? 
	   Yet again, does Slaterness help? 
	  \end{itemize}
To answer this question we first generated variants of \eqref{Sturm} by Algorithm \ref{algo-nonstrict-SDP}.
Given that the $A_i$ and $Y_j$ above satisfy the \eqref{eqn-base} equations, 
we only had to run  Step \ref{algo-nonstrict-SDP-extend}, and the part of \ref{algo-nonstrict-SDP-set-bi} which sets the $b_i.$ 

We chose $n=10$ and $n=20.$ 
The smallest value for $m$ we considered was $m = n-2,$ i.e., in this case we do not add any equations to 
\eqref{Sturm}. The largest value of $m$ we considered was $m=\lfloor n(n+1)/3 \rfloor.$ 
This maximum value of $m$ \co{$m=\lfloor n(n+1)/3 \rfloor$}
 allows us to  generate a good portion of possible equations in a space of dimension 
$n(n+1)/2, \,$ while keeping the spread of $\svec(\A)$ small. 
The resulting instances belong in the nonSlater category, i.e., neither primal, nor dual are guaranteed to be Slater.
(Note that \eqref{Sturm} satisfies Slater's condition, while its dual does not. But adding more constraints may change 
Slaterness of both the primal and dual.) 

We then generated variants which satisfy Slater's condition on both sides,
by Algorithm \ref{algo-nonstrict-Slater-SDP}.
We chose 
$$
X^* = \alpha E_{nn}, \, Z^* = \alpha E_{11}, \, \bX = \bZ = I, \, \text{where} \, \alpha = 1 + \dfrac{n-2}{2}.
$$
These satisfy \eqref{sosickofthis}.  
 We chose the $A_i$ defined in \eqref{eqn-Sturm-define-Ai} and the $Y_j$ defined in 
\eqref{eqn-Sturm-define-Yj}. These satisfy the \eqref{eqn-base} equations, so we  start the algorithm at 
 Step \eqref{step:three}\ref{step:three:a}.

We found that the $5$th DIMACS errors were all at least  $- 5 \times 10^{-8}.$
SDPT3 returned a $0$ termination code on all instances.

Tables \ref{tab:worstdimacserror:Sturm}, \ref{tab:worstforwarderror:Sturm}, and \ref{tab:worstratio:Sturm} 
report the worst DIMACS, worst forward errors, and the worst ratios,
just like  in the previous subsection. For example, 
in table \ref{tab:worstdimacserror:Sturm} the upper left entry 
	$(35,\;8,\;2.6207 \times 10^{-4})$ means: when $n=10, \,$ the worst DIMACS error was achieved when 
	$m=35$ and $k=8$ and it equaled $2.6207 \times 10^{-4}.$ (Recall that in these instances 
	$k$  always equals $n-2$).

We see that: 
\begin{enumerate}
	\item The DIMACS errors are sometimes of the order of $10^{-3}.$ 
	Again, some of these instances are challenging even when we aim to 
	minimize the DIMACS error. 
	\item The forward errors and the ratio  of the forward errors to DIMACS errors are strikingly large. The forward errors 
	are of the order $10^1$ and the ratios 
	are of the order $10^6.$

	\item Now Slaterness improves matters considerably, both in terms of  forward errors, and the ratios. 
\end{enumerate}

\co{

\begin{table}[h]
	\centering
	\begin{tabular}{|c|c|c|}
		\hline
		& $n=10$ & $n=20$ \\
		\hline
		nonSlater & $(34,8,1.0661\times 10^{-3})$ & $(131,18,5.2727\times 10^{-4})$ \\
		\hline
		Slater & $(30,8,1.2964\times 10^{-3})$ & $(131,18,2.3101\times 10^{-4})$ \\
		\hline
	\end{tabular}
	
		\caption{Worst DIMACS errors, Sturm instances}
	\label{tab:worstdimacserror:Sturm:old}
	
\end{table}
}

\begin{table}[h]
	\centering
	\begin{tabular}{|c|c|c|}
		\hline
		& $n=10$ & $n=20$ \\
		\hline
		nonSlater & $(28,8,9.3619\times 10^{-4})$ & $(134,18,1.1247\times 10^{-4})$ \\
		\hline
		Slater & $(32,8,1.4580\times 10^{-3})$ & $(135,18,1.6296\times 10^{-4})$ \\
		\hline
	\end{tabular}
	
		\caption{Worst DIMACS errors, Sturm instances}
	\label{tab:worstdimacserror:Sturm}
	
\end{table}

\co{

\begin{table}[h]
	\centering
	\begin{tabular}{|c|c|c|}
		\hline
		& $n=10$ & $n=20$ \\
		\hline
		nonSlater & $(8,8,8.6226\times 10^{0})$ & $(19,18,2.0359\times 10^{1})$ \\
		\hline
		Slater & $(10,8,1.6292\times 10^{0})$ & $(20,18,3.0578\times 10^{0})$ \\
		\hline
	\end{tabular}
	
		\caption{Worst forward errors, Sturm instances}
	\label{tab:worstforwarderror:Sturm:old}
	
\end{table}

}
\begin{table}[h]
	\centering
	\begin{tabular}{|c|c|c|}
		\hline
		& $n=10$ & $n=20$ \\
		\hline
		nonSlater & $(8,8,8.6226\times 10^{0})$ & $(21,18,1.8545\times 10^{1})$ \\
		\hline
		Slater & $(10,8,1.6292\times 10^{0})$ & $(20,18,3.0501\times 10^{0})$ \\
		\hline
	\end{tabular}

	\caption{Worst forward errors, Sturm instances}
	\label{tab:worstforwarderror:Sturm}
	
\end{table}

\co{

\begin{table}[h]
	\centering
	\begin{tabular}{|c|c|c|}
		\hline
		& $n=10$ & $n=20$ \\
		\hline
		nonSlater & $(9,8,9.0234\times 10^{5})$ & $(19,18,4.7754\times 10^{6})$ \\
		\hline
		Slater & $(23,8,1.0377\times 10^{5})$ & $(69,18,4.5705\times 10^{5})$ \\
		\hline
	\end{tabular}
	
		\caption{Worst ratios, Sturm instances}
	\label{tab:worstratio:Sturm:old}
	
\end{table}

}
\begin{table}[h]
	\centering
	\begin{tabular}{|c|c|c|}
		\hline
		& $n=10$ & $n=20$ \\
		\hline
		nonSlater & $(9,8,8.9385\times 10^{5})$ & $(21,18,4.2007\times 10^{6})$ \\
		\hline
		Slater & $(14,8,9.3642\times 10^{4})$ & $(71,18,4.4131\times 10^{5})$ \\
		\hline
	\end{tabular}
	
		\caption{Worst ratios, Sturm instances}
	\label{tab:worstratio:Sturm}
	
\end{table}

\FloatBarrier

In figures \ref{fig:Sturm10} and \ref{fig:Sturm20} we show the DIMACS and forward errors as a function of $m.$ 
As expected, the forward error decreases as $m$ increases. However, even when $m$ is large, the forward errors are still substantial.

	\begin{figure}[H]
	\centering
	
	\begin{subfigure}[t]{0.48\textwidth}
		\centering
		\includegraphics[width=\textwidth]{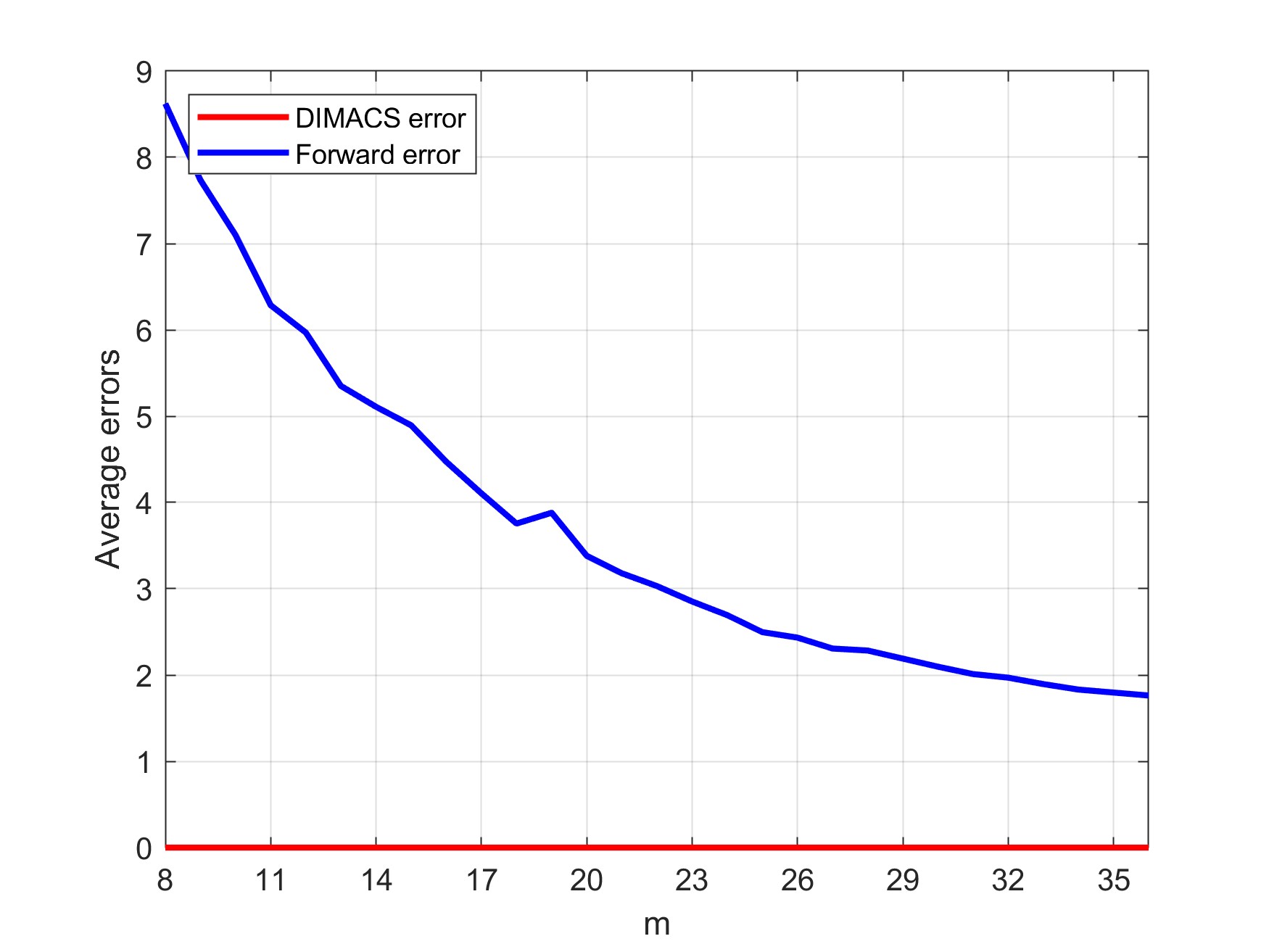}
		\end{subfigure}
	\hfill
	\begin{subfigure}[t]{0.48\textwidth}
		\centering
		\includegraphics[width=\textwidth]{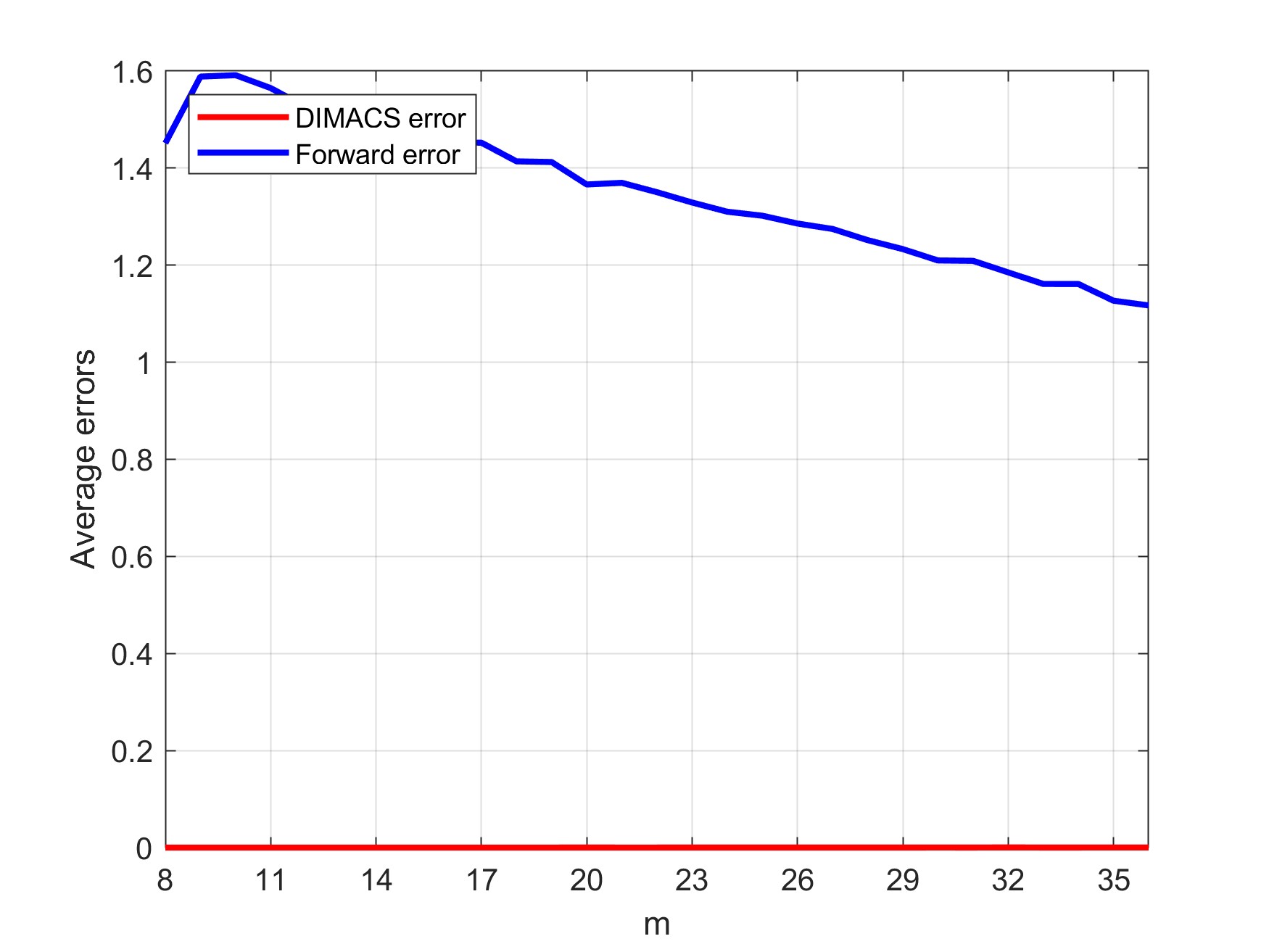}
		\end{subfigure}
	\caption{Average errors in SDPs with Sturm structure, nonSlater vs Slater, $n=10$}
	\label{fig:Sturm10} 
\end{figure}

	\begin{figure}[H]
	\centering
	
	\begin{subfigure}[t]{0.48\textwidth}
		\centering
		\includegraphics[width=\textwidth]{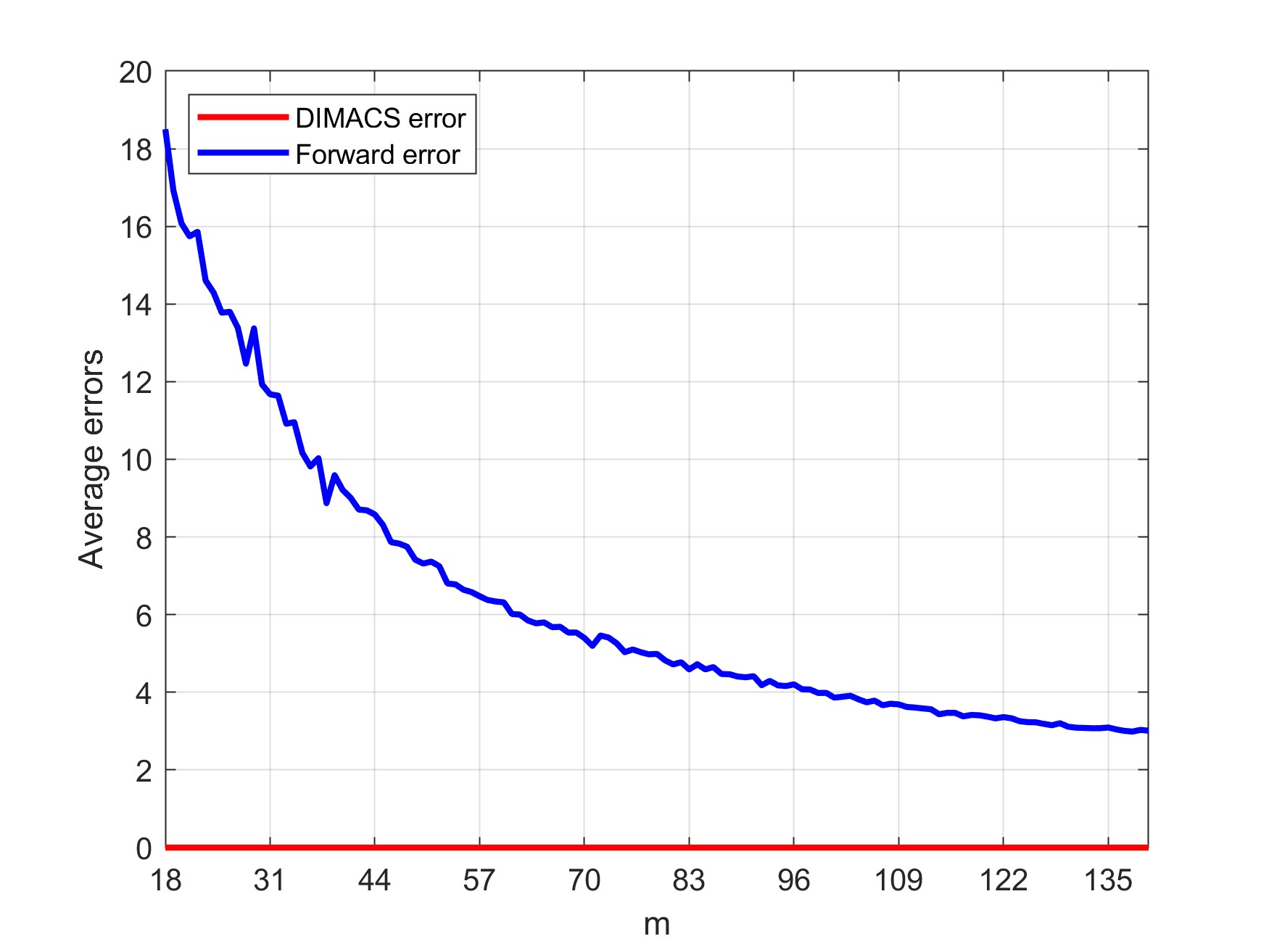}
			\end{subfigure}
	\hfill
	\begin{subfigure}[t]{0.48\textwidth}
		\centering
		\includegraphics[width=\textwidth]{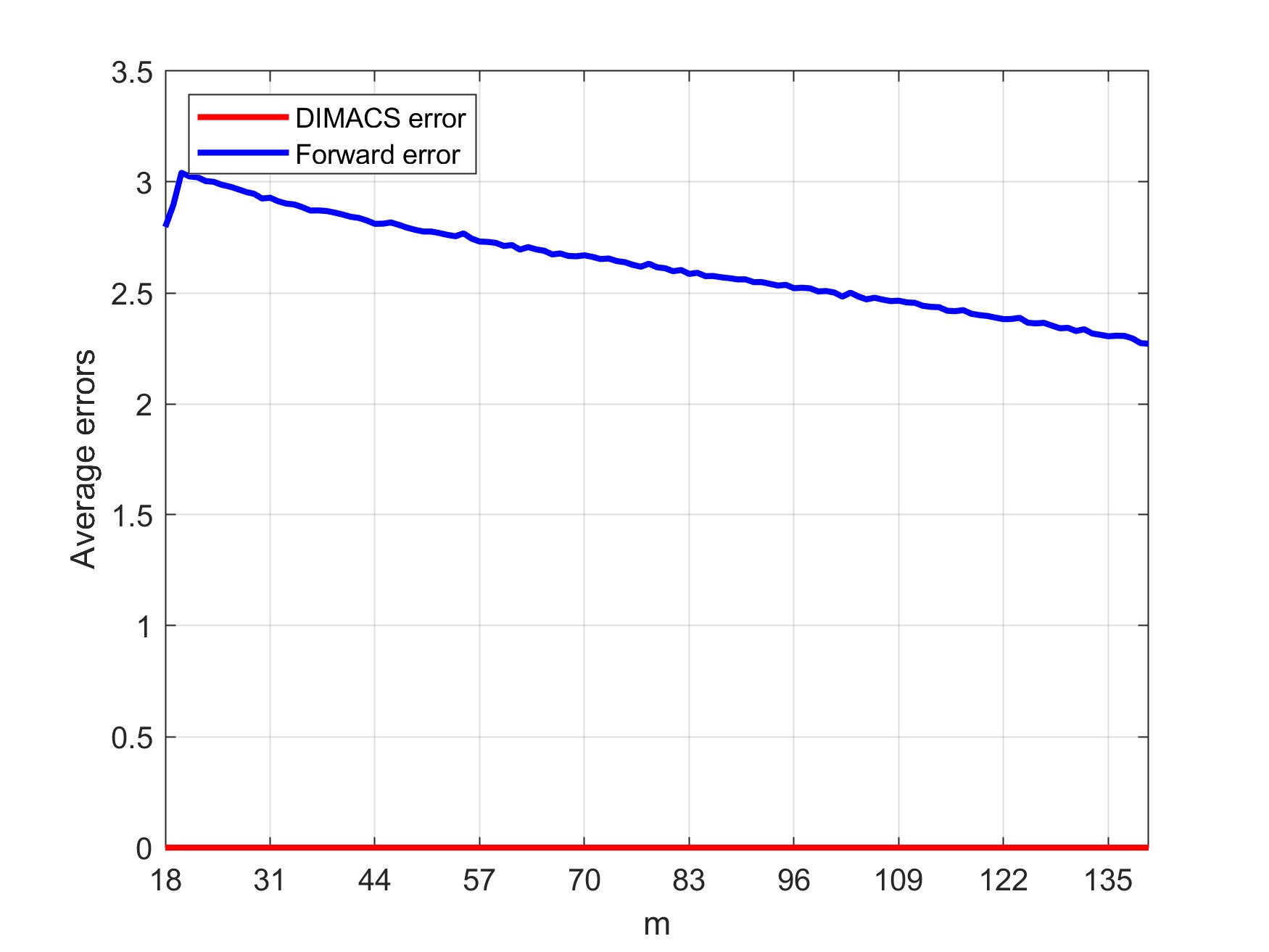}
		\end{subfigure}
\caption{Average errors in SDPs with Sturm structure, nonSlater vs Slater, $n=20$}
\label{fig:Sturm20} 

\end{figure}

\subsection{Comparison of the interval partition and Sturm instances} 

In this subsection we compare the instances with the interval partition and Sturm structures.
\co{Recall that (i) the Sturm instances are variants of a classical example,
with provably large theoretical worst case forward error, and (ii) the interval partition instances are much less structured.}
The main and intriguing finding is that for comparable values of $n$ 
the former may have forward errors which are as large as the forward errors in the latter.

We first look at the "nonSlater" row of tables \ref{tab:worstforwarderror:random} and \ref{tab:worstforwarderror:Sturm}.
We see that the worst forward error is larger in the interval partition instances.

Next we look at Figures \ref{fig:avg-error-20-20} and \ref{fig:Sturm20}.
The "nonSlater" instances with interval partition, and $n=20, m = 20, \,$ and $k=3$ have average  forward errors more than $20;$ 
whereas the Sturm instances, and $ n = 20, m = 18$ 
have average  forward errors about $18.$ 

Even more intriguing is to look at the singularity degree of the enhanced system \eqref{p-Zstar}.
When $n=m=20, \,$ and $k=3, \,$ 
then  \eqref{p-Zstar} associated with the interval partition instances has singularity degree {\em at most} $3.$
On the other hand, \eqref{p-Zstar} associated with the Sturm  instances has singularity degree {\em exactly} $18.$
The former instances are also less structured: the nonzero entries were generated randomly, as we described in Subsection
\ref{subsection-computational-results-random}. Again, we refer to Figure \ref{fig:sparsity-interval}. In contrast,
in the Sturm instance the $A_i$ have just three nonzeros.

In conclusion,  the instances with the interval partition structure and much smaller singularity degree
 are much less structured; however, some of them turned  out to be as challenging,
as the classical worst case SDP of Sturm.

\subsection{Sanity check} 

To make sure that our SDPs are difficult precisely because of lack of strict complementarity, we ran 
a simple experiment. We fixed $n=20$ and generated SDPs for $m=10,20, \dots, 60.$ For each $m$ we generated $50$ SDPs.
In each SDP  
we chose an  $X^*$ optimal in \eqref{p} and $Z^*$ optimal in the dual the same way we chose them in Algorithms 
\ref{algo-nonstrict-SDP} and \ref{algo-nonstrict-Slater-SDP}. We chose  their ranks at random, but made sure that 
their sum was $20, \, $ i.e., strict complementarity was satisfied. 
We also chose the spread of all $A_i$ to be $10^6, \,$ i.e., the upper bound on the spread of both of $\svec(\A)$ and $\svec(Y)$ we used both for the interval partition and the Sturm instances.

The worst DIMACS error over  these runs turned out to be $\leq 4 \times 10^{-3}$ and 
the worst forward error $\leq 2 \times 10^{-6}.$

\subsection{Data availability}

The interval SDPs and Sturm SDPs are available at 

\url{https://www.dropbox.com/scl/fo/vgwreu1hdtw2usfvgrqo2/AEX4sGrp3fy5FwPvSpHdrxw?rlkey=olh8t8k96zpyutr30ldaczftr&dl=0}

In the Interval\_SDPs.mat file there are $8$ cell arrays, each of dimension $40 \times 400.$ They are called
\begin{itemize}
	\item \texttt{nonSlaterworstdimacsSDPs},
	\item \texttt{nonSlaterworstforwardSDPs},
	\item \texttt{nonSlaterworstratioSDPs},
	\item \texttt{nonSlaterunsolvedSDPs},
	\item \texttt{SlaterworstdimacsSDPs},
	\item \texttt{SlaterworstforwardSDPs},
	\item \texttt{SlaterworstratioSDPs},
	\item \texttt{SlaterunsolvedSDPs},
\end{itemize}
respectively. The only $(n,m)$ pairs for which the 
cell arrays is nonempty, are the ones for which we ran a test. 
That is,  $n=10$ and $n=20; $ and for each $n, \, $ 
$m \in \{n, n+10, n+20, \dots, 3n \}.$ 

We first explain how the data is stored for the cell array \texttt{nonSlaterworstdimacsSDPs.}  For that, we recall that in these instances we chose the rank of both $X^*$ and $Z^*$ to be two.
For each $(n,m)$ pair for which this cell array is nonempty, 
\texttt{nonSlaterworstdimacsSDPs\{n,m\}} is an array of length $n.$ 
 
  For $k \in \{1, 2, \dots, n-4 \}$ the field \texttt{nonSlaterworstdimacsSDPs\{n,m\}(k)} is a Matlab structure. For a fixed $k$ 
  the structure stores the "nonSlater" SDP 
  which gave the worst, i.e., largest DIMACS error among  the $20$ runs \footnote{Thus it would suffice to have the length of 
  \texttt{nonSlaterworstdimacsSDPs\{n,m\}} to be $n-4.$}. The  fields of the Matlab structure  are: 
\begin{itemize}
	\item[]  
\texttt{A}, \texttt{b}, \texttt{C}, \texttt{Y}, \texttt{T},
\texttt{Xstar}, \texttt{Zstar}, \texttt{ystar},
\texttt{Xbar}, \texttt{Zbar}, \texttt{P}, \texttt{Q},
\texttt{cleanA}, \texttt{cleanC}, \texttt{cleanY},
\texttt{cleanXstar}, \texttt{cleanZstar},
\texttt{cleanXbar}, \texttt{cleanZbar.}
\end{itemize}

We explain \co{the meaning of } these below:
\begin{itemize}
	\item The field \texttt{A} is a cell array, and \texttt{A\{i\}} contains $A_i$ for $i=1, \dots, m.$ 
	Similarly, the field \texttt{Y} stores the $Y_j$ matrices.
	\item The fields 
	\texttt{b}, \texttt{C}, \texttt{Xstar}, etc. store $b, C, X^*, $ etc. respectively. 
	Now the fields 
	\texttt{barX} and \texttt{barZ} are redundant. 
		\item  The fields \texttt{P} and \texttt{Q} store the structure of $(Z^*, A_1, \dots, A_k), \, $ and the structure of 
	$(X^*, Y_1, \dots, Y_k), \,$ respectively. 
	\item The fields \texttt{clean A,} \texttt{cleanC, } etc. contain the $A_i, C, \,$ etc. before they were rotated.
	\item The field \texttt{T} contains the matrix by which we rotated the original $A_i$ and $Y_j $ (which are stored in 
	\texttt{cleanA} and \texttt{cleanY})  to get the  $A_i$ and $Y_j $ (which are stored in 
	\texttt{A} and \texttt{Y}).
\end{itemize}
 The explanation for the other cell arrays 
 is analogous. 
 Thus, 
 \begin{itemize}
 	\item  	\texttt{nonSlaterworstforwardSDPs} stores the "nonSlater" SDPs with the worst, i.e., largest forward error; 
 \item \texttt{nonSlaterworstratioSDPs} stores the  "nonSlater" SDPs with the worst, i.e., largest forward error/backward error ratio; and 
 \item \texttt{nonSlaterunsolvedSDPs}  stores the   "nonSlater" SDPs which were not solved by SDPT3, i.e., the termination code was not zero.
 Among these, for each $(n,m)$ pair and each $k$ we store at most one SDP. 
  \end{itemize}
 
 The fields in the "Slater" instances have an entirely analogous explanation. 
 The fields 
 \texttt{barX} and \texttt{barZ} store the $\bX$ and $\bZ, \,$ respectively.

 In the Sturm\_SDPs.mat we store the Sturm instances. All cell arrays have the same meaning as in the 
 Interval partition instances, with the following caveats: 
 \begin{itemize}
 	\item Since now $k=n-2, \,$ now the \texttt{nonSlaterworstdimacsSDPs\{n,m\}(k)} structure is nonempty only when $k=n-2.$  
 	An analogous statement is true of 	the  \texttt{nonSlaterworstforwardSDPs}, and the 
 	\texttt{nonSlaterworstratioSDPs} cell arrays; and the "Slater" variants.
 	\item Given that SDPT3 now solved all the tested instances, 
 	the fields in the 
 	\texttt{nonSlaterunsolvedSDPs} and \texttt{SlaterunsolvedSDPs} cell arrays are all empty. 
 \end{itemize}

\co{

The field \texttt{A} is a cell array, and \texttt{A\{i\}} contains $A_i$ for $i=1, \dots, m.$ 
Similarly,
\texttt{Y} stores the $Y_j$ certificates. The fields 
\texttt{b}, \texttt{C}, \texttt{Xstar}, \texttt{Zstar}, \texttt{ystar},
\texttt{Xbar}, \texttt{Zbar} store $b, C, X^*, $ etc. respectively. The fields \texttt{P} and \texttt{Q} store the structure of $(Z^*, A_1, \dots, A_k), \, $ and the structure of 
$(X^*, Y_1, \dots, Y_k), \,$ respectively. 

}

\co{
\begin{verbatim}
	Ed Klotz:
	Spreads < 1e+6 don?t worry me at all.   But, as seen in the second example above, other model characteristics could be problematic.
	Spreads in the 1e+6 - 1e+8 don?t worry me much.    Between Gurobi?s scaling, the numerical safeguards in the code, and the availability of the NumericFocus parameter to instruct the solver to add levels of
	caution in the computations,  these kind of spreads typically don?t cause any trouble.   But setting
	the scale flag parameter to 2 to use geometric mean scaling may be useful.
	Spreads in the 1e+8 - 1e+10 range are a bit more worrisome.
	Spreads above 1e+10 would typically prompt me to take a closer look at the model, even if it seems to be solving just fine.
	
	
\end{verbatim}

}

\section{Proof of Theorem \ref{thm-main}} 
\label{section-proof-of-theorem-1} 

In this section we prove Theorem \ref{thm-main}. We first prove two lemmas. The first, 
 Lemma 
\ref{lemma-Xstar} proves there is a reformulation that satisfies items 
\eqref{thm-main-1} and \eqref{thm-main-3} in Theorem \ref{thm-main}.
Lemma \ref{lemma-Zstar} then proves there is 
a reformulation that satisfies items 
\eqref{thm-main-2} and \eqref{thm-main-3} in Theorem \ref{thm-main}.
The most technical part of the proof is merging these two reformulations into one.

The following is a theorem of the alternative, which is the only nontrivial result we will use in this paper: 
\begin{Proposition} \label{proposition-alternative} 
	Suppose \eqref{p} is feasible. Then it fails Slater's condition $\Leftrightarrow$ 
	the system 
	\begin{equation} \label{eqn-A*y-alternative} 
		\A^* y \in \psd{n} \setminus \{ 0 \}, \,  \la b, y \ra = 0
	\end{equation}
	is feasible 
\end{Proposition}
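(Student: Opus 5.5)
The plan is to prove this theorem of the alternative from conic duality applied to an auxiliary SDP that measures how far the feasible set is from the interior of $\psdn$. The direction $\Leftarrow$ is elementary, so I do it first. Suppose $y$ satisfies \eqref{eqn-A*y-alternative}, and set $Z := \A^* y$. Let $X$ be any feasible solution of \eqref{p}. Then
$$
\la Z, X \ra = \la y, \A X \ra = \la b, y \ra = 0 .
$$
Since $Z \succeq 0$ and $Z \neq 0$, a positive definite $X$ would give $\la Z, X \ra > 0$. Hence no feasible $X$ is positive definite, so Slater's condition fails.

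For $\Rightarrow$, assume \eqref{p} is feasible but has no positive definite feasible point. Then the affine subspace $L := \{ X \in \symn : \A X = b \}$ is nonempty and disjoint from the open convex cone $\operatorname{int} \psdn$. By the separating hyperplane theorem there is a nonzero $S \in \symn$ and a scalar $\alpha$ with
$$
\la S, X \ra \leq \alpha \leq \la S, W \ra \quad \text{for all } X \in L \text{ and } W \succ 0 .
$$
I then extract the required $y$ from this separation in three steps.

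\begin{enumerate}
\item Letting $W$ range over the positive definite matrices, I get $S \succeq 0$ and $\alpha \leq 0$. The bound $\alpha \leq 0$ comes from taking $W = tI$ with $t \downarrow 0$.
\item A linear functional that is bounded above on the affine set $L$ must be constant on it. So $S$ is orthogonal to the direction space $\ker \A$, which gives $S \in (\ker \A)^\perp = \operatorname{range} \A^*$. Hence $S = \A^* y$ for some $y$.
\item On $L$ the functional equals $\la S, X \ra = \la y, \A X \ra = \la b, y \ra$, so $\la b, y \ra \leq \alpha \leq 0$. Now take a feasible $X \succeq 0$, which exists by assumption. Since $S \succeq 0$, we have $\la b, y \ra = \la S, X \ra \geq 0$. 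Therefore $\la b, y \ra = 0$.
\end{enumerate}

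Together these show that $y$ satisfies \eqref{eqn-A*y-alternative}, since $\A^* y = S$ is psd and nonzero and $\la b, y \ra = 0$.

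The main subtlety is step 3, which is exactly where the feasibility hypothesis enters. Without a feasible psd point we would obtain only $\la b, y \ra \leq 0$, and that inequality is the infeasibility alternative rather than the one stated here. A secondary point is that separating an affine set from an open cone requires no closedness of $\A(\psdn)$. This is why I separate from $\operatorname{int} \psdn$ and do not appeal to strong duality, which could fail in precisely these non-Slater situations.
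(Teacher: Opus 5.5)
Your proof is correct. The paper does not prove this proposition. It states it as a standard theorem of the alternative from the facial reduction literature (cf.~\cite{BorWolk:81,Pataki:13}) and uses it as a black box, calling it the only nontrivial optimization result the paper needs. So there is no internal argument to match, and your proof is a genuinely different, self-contained route.

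Your argument proceeds in three moves. First, you separate the affine set $\{X : \A X = b\}$ from the open cone $\operatorname{int}\psdn$. Second, you use the fact that a linear functional bounded above on an affine set is constant there, which gives $S = \A^* y$. Third, you use a feasible psd $X$ to upgrade $\la b, y \ra \le 0$ to $\la b, y \ra = 0$. This is the standard self-contained proof, and it makes the paper's one external ingredient rest only on finite-dimensional separation. You are also right that it avoids any closedness or strong-duality assumption.

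Two small remarks:
\begin{enumerate}
\item In step 1, the conclusion $S \succeq 0$ comes from scaling: $\la S, tW \ra \ge \alpha$ for all $t > 0$ forces $\la S, W \ra \ge 0$. You leave this implicit, but it is routine.
\item Your argument never uses the paper's standing assumption that the $A_i$ are linearly independent. This is worth noting explicitly. In the proof of Lemma~\ref{lemma-Xstar} the proposition is applied to $\bar{\A}$, which adjoins $A_0 = Z^*$. In later iterations it is applied to matrices from which rows and columns have been deleted. Linear independence is not guaranteed in either case, and your proof covers both as stated.
\end{enumerate}
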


\subsection{Lemmas} 
\label{subsection-proof-of-theorem-1-lemmas} 

\begin{Lemma} \label{lemma-Xstar} 
	 Assume that $X^*$ and $Z^*$ given in 
	\eqref{eqn-Xstar-Zstar} is a pair of optimal solutions 
	in \eqref{p}$\mhyphen$\eqref{d}.
	
	Then $X^*$ is a maximum rank  optimal solution of \eqref{p} $\Leftrightarrow$  there is a nonnegative integer $k$ and a reformulation 
	\begin{equation} \label{p-prime-lemma1} 
		\begin{array}{rcl} 
			\A^\prime X & = & b^\prime \\  
			X & \succeq & 0
		\end{array}  \tag{$P^\prime$} 
	\end{equation}
	which satisfies \eqref{thm-main-1} and \eqref{thm-main-3} in Theorem \ref{thm-main}.
\end{Lemma}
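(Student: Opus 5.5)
The direction $\Leftarrow$ is the first half of the ``easy'' direction of Theorem \ref{thm-main}. If $T$ is the product of the admissible rotations, then $T^{-1}X^*T^{-\top}=X^*$ and $T^\top Z^* T = Z^*$. By Proposition \ref{prop-reform-invariance}, any optimal $X$ of \eqref{p} maps to an optimal $X'$ of \eqref{p-prime-lemma1}. Such an $X'$ satisfies $\la Z^*,X'\ra=0$ and $\la A_i',X'\ra = b_i' = 0$ for $i\le k$, so by the regular facial reduction sequence it lies in $\left(\begin{smallmatrix}0&0\\0&\oplus_r\end{smallmatrix}\right)$. Since $T$ is admissible, this face is preserved, so $X$ also lies in it, and hence $\rank X\le r$.

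For $\Rightarrow$, I would build the reformulation inductively by facial reduction, applied to the optimal face. Consider the system \eqref{p-Zstar}, i.e.\ the feasible set of \eqref{p} with $\la Z^*,X\ra=0$ attached, whose solutions are exactly the optimal $X$ (with the original right-hand side $b$). Since $Z^*$ has the block $\Gamma\succ 0$ in $P_0=\{1,\dots,s\}$, every optimal $X$ has its first $s$ rows and columns equal to zero. Restrict to the remaining index set $[n]\setminus P_0$, and delete those rows and columns from all $A_i$. If $s+1,\dots,n-r$ is empty we are done with $k=0$. Otherwise the reduced system in $\psd{n-s}$ is feasible (it contains the restriction of $X^*$), but by maximality of $\operatorname{rank} X^*=r<n-s$ it has no positive definite solution. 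Proposition \ref{proposition-alternative} therefore gives $y$ with $\la b,y\ra=0$ and the restriction of $\A^*y$ to $[n]\setminus P_0$ psd and nonzero.

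Because $\la \A^*y, X^*\ra=\la b,y\ra=0$ and $\Lambda\succ0$, this psd block vanishes on the last $r$ indices, so its range lies in the gap set $G$. I then apply a row operation so that $\A^*y$ becomes $A_1'$ with $b_1'=0$. Next I choose $U$ of order $n-r-s$ diagonalizing the block of $A_1'$ on $G$ into $\left(\begin{smallmatrix}\Lambda_1&0\\0&0\end{smallmatrix}\right)$, and rotate by the admissible $T=\operatorname{diag}(I_s,U,I_r)$. This fixes $X^*$ and $Z^*$, and $A_1'$ now has the shape required in \eqref{eqn-regfr}; the rows and columns indexed by $P_0$ remain arbitrary. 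Iterating on the smaller index set $[n]\setminus P_{0:1}$ yields $A_2',\dots$. Each step strictly shrinks the index set, since $\Lambda_i$ is nonempty, so the process terminates when the remaining set equals the last $r$ indices, i.e.\ when the face is $\left(\begin{smallmatrix}0&0\\0&\oplus_r\end{smallmatrix}\right)$. Each new equation must be a combination in which the coefficient of some not-yet-used equation is nonzero. This holds because the earlier $A_j'$ vanish on the current index set while the new matrix does not, so after exchanging equations it is a legitimate replacement.

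The main obstacle is bookkeeping, not conceptual. Four points need care. First, the theorem of the alternative must be applied to the reduced system, whose constraint data are the restrictions of the $A_i$ and the same $b$. Second, I must verify that later admissible rotations, which act only on the shrinking middle block, preserve the shapes of the already-constructed $A_j'$; they do, since they act as the identity on the earlier $P_j$. Third, the rotation must remain of the form $\operatorname{diag}(I_s,U,I_r)$. Fourth, if $b$ is nonzero, the infeasibility certificate still yields $\la b,y\ra=0$ exactly, which gives $b_i'=0$.
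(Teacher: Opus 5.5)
Your proposal is correct and follows essentially the paper's argument. Both repeatedly apply the theorem of the alternative to the system describing the optimal face, use $\langle \A^* y, X^*\rangle = \langle b, y\rangle = 0$ to confine each new psd block to the gap set, install the combination with zero right-hand side by a row operation, and diagonalize it with an admissible rotation $\operatorname{diag}(I_s,U,I_r)$. The differences are cosmetic: the paper adjoins $Z^*$ as an equation $A_0$ (taking $y=e_0$ in the first step) and deletes the used equations, whereas you peel off $P_0$ directly and keep the used equations, noting that they vanish on the current index set so some unused coefficient must be nonzero.
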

{\bf Proof}  The $\Leftarrow$ direction is just like in Theorem \ref{thm-main}. To prove $\Rightarrow,$ 
define $A_0 := Z^*, b_0 := 0$ and  
consider the semidefinite system 
\begin{equation}\label{p-opt}  
	\begin{array}{rcl} 
		\la A_i, X \ra & = & b_i \, (i=0,1,  \dots, m) \\
		X & \succeq & 0, 
	\end{array} \tag{$P_{\rm opt}$} 
\end{equation}
Clearly, $X$ is optimal in \eqref{p} iff it is feasible in \eqref{p-opt}.
Let us also define the linear operator $	\bar{\A} $ and vector 
$\bar{b} $ as  
\begin{equation} \label{eqn-repr-Aprime} 
	\begin{array}{rcl} 
		\bar{\A}  X & = &  (\la A_0, X \ra,  \dots, \la A_m, X \ra)^{\top} \\
		\bar{b} & = & (b_0, b_1, \dots, b_m)^\top. 
	\end{array} 
\end{equation}
If \eqref{p-opt} is strictly feasible, then we do not need to do anything. (In the very first step this can only happen 
if $r=n, \, s =0$).  Otherwise, we apply Proposition \ref{proposition-alternative} (with $\bar{\A},$ and $\bar{b}$ in place of 
$\A$ and $b$) and 
find $y \in \rad{m+1}$ such that 
$\bar{\A}^* y \in \psd{n} \setminus \{ 0 \}, \,  \la \bar{b}, y \ra = 0.$

We observe that 
\begin{equation} \label{eqn-barA*yX*}
	\la \bA^* y, X^* \ra = \la y, \bA X^* \ra = \la y, \bb \ra = 0.
\end{equation}
Thus the last  $r$ rows and columns of $\bA^* y$ are zero. We rotate all $A_i$ and hence 
$\bA^* y$ by a matrix of the form $\left( \begin{smallmatrix}
	U & 0 \\ 0 & I_r 
\end{smallmatrix} \right)$ so after this rotation we have 
$$
\bA^* y \, = \, \bpx \Theta  & 0 \\
0 & 0 \epx,
$$
where $\Theta $ is of order say $t, \, $ and positive definite. Note that $0 < t \leq n-r.$ 

Since $\bA^* y $ is nonzero, there is $j \in \{0, \dots, m \}$ such that $y_j \neq 0.$ We then 
perform the operation 
$$
(A_j, b_j) \leftarrow (\bA^* y, 0), 
$$
delete the $j$th equation, and delete the first $t$ rows and columns from all $A_i.$ Note that these rows and columns are all zero in $X^*.$ 
We then proceed in like fashion with this smaller system. When we are done, we just renumber the equations in the final reformulation, and add  the $()^\prime$ to the $A_i$ and $b_i.$ 

The proof is complete, if we note  the following:
\begin{enumerate}
	\item  in the very first step we can take 
	$y = (1,0, \dots, 0)^\top, \, $ so $\bA^* y = Z^*$ and no rotation is needed;
	\item in all steps the last $r$ rows and columns of 
	$\bA^* y$ are zero, and in the first step we delete 
	the first $s$ rows and columns of all matrices.
\end{enumerate}
\qed 

The algorithm we gave above deletes rows and columns of all $A_i.$ 
Next we sketch how it can be made to  reformulate the original system 
\eqref{p-opt}, we number the iterations $0, 1, 2, \dots,$ and define $y^i$ as the  
$y$ vector, and  $U_i$ the upper left minor of the rotation matrix found in 
 found in iteration $i,$ respectively. 

Then in iteration $i$ we can reformulate the original 
\eqref{p-opt} by 1) padding the $y^i$ with zeros, which correspond to deleted equations, and 2) 
using the rotation matrix 
$$
T_i = \bpx I_s & 0 & 0 \\ 0 & U_i & 0 \\ 0 & 0 & I_r \epx.
$$

\begin{Lemma} \label{lemma-Zstar}
	 Assume that $X^*$ and $Z^*$ given in 
	\eqref{eqn-Xstar-Zstar} is a pair of optimal solutions 
	in \eqref{p}$\mhyphen$\eqref{d}.
	
	Then $Z^*$ is a maximum rank  optimal solution of \eqref{d} $\Leftrightarrow$  there is a nonnegative integer $\ell, $ 
	$Y_1, \dots, Y_\ell \in \symn$ and a reformulation 
	\begin{equation} \label{p-prime-lemma2} 
		\begin{array}{rcl} 
			\A^\prime X & = & b^\prime \\  
			X & \succeq & 0
		\end{array} \tag{$P^\prime_{L2}$} 
	\end{equation}
	which satisfies \eqref{thm-main-2} and \eqref{thm-main-3} in Theorem \ref{thm-main}.
\end{Lemma}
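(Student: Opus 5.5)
The $\Leftarrow$ direction follows the corresponding part of the proof of Theorem \ref{thm-main}. Let $T$ be the (admissible) product of rotations and $Z$ optimal in \eqref{d}. Then $Z' := T^\top Z T$ is optimal in the dual of \eqref{p-prime-lemma2}, by Proposition \ref{prop-reform-invariance}. Write $Z' = C' - \A'^* y$. By item \eqref{thm-main-2-a} we get $\la Y_j, Z' \ra = 0$ for all $j$. Since $X^*$ is unchanged by the admissible rotation and optimal, $\la X^*, Z' \ra = 0$. So the regular facial reduction sequence $(X^*, Y_1,\dots,Y_\ell)$ forces $Z' \in \left(\begin{smallmatrix} \oplus_s & 0\\ 0 & 0\end{smallmatrix}\right)$. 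Because $T$ is admissible, the same holds for $Z$. Hence $Z^*$ has maximum rank.

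For $\Rightarrow$, I would mirror the proof of Lemma \ref{lemma-Xstar}, applied to the dual optimal set written in primal form. The set of dual optimal slacks is
\[
\{ Z \succeq 0 : Z \in C + \lin\{A_1,\dots,A_m\}, \ \la X^*, Z \ra = 0 \}.
\]
I would parametrize the affine space $C + \operatorname{range}\A^*$ by equations $\la B_p, Z\ra = \la B_p, C\ra$, where $B_1,\dots,B_q$ is a basis of $\{ Y : \A Y = 0\}$. Then I attach $B_0 := X^*$ with right-hand side $0$. This gives a semidefinite system $(D_{\rm opt})$ in primal form whose feasible set is exactly the dual optimal set.

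Next, I would run the facial reduction procedure of Lemma \ref{lemma-Xstar} on $(D_{\rm opt})$, with the roles of the corners swapped. At step $0$ take $y=(1,0,\dots,0)^\top$, which gives the first element $X^*$. At later steps Proposition \ref{proposition-alternative} gives $w$ with $Y := \sum_p w_p B_p \in \psd{n}\setminus\{0\}$ and $\sum_{p\geq 1} w_p\la B_p, C\ra = 0$. Since $Z^*$ is feasible in $(D_{\rm opt})$, we have $\la Y, Z^*\ra = 0$, so the first $s$ rows and columns of $Y$ vanish. Each $Y$ obtained this way, before the $X^*$ component is removed, is a combination of $X^*$ and an element $Y'$ of $\ker\A$ with $\la C, Y'\ra = 0$.

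A subtlety is that item \eqref{thm-main-2-a} demands $\A' Y_j = 0$ and $\la C', Y_j\ra = 0$ exactly. Adding a multiple of $X^*$ to $Y$ changes $\A Y$ by a multiple of $b$, so this must be handled. Because $X^*$ is the first element of the sequence, I would drop the $X^*$ component and take $Y_j := \sum_{p\ge1} w_p B_p$. This changes $Y$ only in the $(Q_0,\cdot)$ blocks, which are already "above" in the triangular pattern of Definition \ref{definition-regfr}. Therefore the regular facial reduction structure of $(X^*, Y_1,\dots,Y_\ell)$ is preserved, and $Y_j$ now lies in $\ker\A$ and satisfies $\la C, Y_j\ra = 0$.

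The rotations are the crux. In Lemma \ref{lemma-Xstar} one rotates the data of \eqref{p}. Here each step needs a rotation bringing the new $Y$ into the form \eqref{eqn-regfr}, i.e. acting on indices $s+1,\dots,n-r$ after the previously fixed blocks. I would take it of the form $T_i$ with identity blocks on the first $s$ and last $r$ indices, which is admissible. A rotation $Y\mapsto T^\top Y T$ of the certificates corresponds to rotating \eqref{p} by $T^{-\top}$, which is again admissible. Under this correspondence the relations $\A Y_j = 0$ and $\la C, Y_j \ra = 0$ are preserved by Proposition \ref{prop-reform-invariance}, and $X^*$ and $Z^*$ remain fixed. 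No row operations on \eqref{p} are needed, so $b' = b$.

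Termination and the final face follow as before. When $(D_{\rm opt})$ becomes strictly feasible in the reduced space, that space must be $\oplus_s$, because $Z^*$ has maximum rank $s$ and the reduced set contains it. This gives item \eqref{thm-main-2-b}. I expect the main obstacle to be the bookkeeping showing that the composed rotations stay admissible, and that dropping the $X^*$ components does not break the regular structure, since those components live in $Q_0$ rows and columns.
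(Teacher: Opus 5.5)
Your proposal is correct and follows essentially the paper's route: you describe the dual optimal set in primal form through a basis of $\ker \A$ (the paper's $D_j$ in \eqref{re-d}), run the facial reduction of Lemma~\ref{lemma-Xstar} with the roles of $X^*$ and $Z^*$ exchanged using only admissible rotations, and note that each certificate is a combination of the rotated basis matrices whose right-hand sides combine to zero, hence it is orthogonal to the correspondingly rotated $A_i$ and $C$. The differences are only in packaging: you attach $\la X^*, Z \ra = 0$ directly instead of going through the objective $\la X_0, Z\ra$ and invoking Lemma~\ref{lemma-Xstar} as a black box, and you spell out the $\Leftarrow$ direction and the removal of the $X^*$ component (which is automatically zero in the paper, since that equation is used up at step $0$); note also that at later steps the certificate is psd only on the reduced face, not in all of $\psd{n}$ as you wrote.
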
 
{\bf Proof} Let $t = n(n+1)/2 - m, \, $ and $D_1, \dots, D_t \in \sym{n}$ be  linearly independent, such that 
$$\la A_i, D_j \ra = 0 \,\, \text{for all} \,\, i \,\, \text{and} \,\, j. $$ 
Also let  $X_0 \in \symn$ be such that $\A X_0 = b.$ 

Consider the SDP 
\begin{equation}\label{re-d}
	\begin{array}{rl} 
		\inf  & \,\, \la X_0, Z   \ra  \\
		s.t. & \,\, \la D_j, Z \ra \, = \, \la D_j, C \ra  \, \text{for} \, j=1, \dots, t \\
		& \,\, Z \succeq 0, 
	\end{array}. \tag{\em  Re-$D$} 
\end{equation}
We claim that 
\bit
\item[] 
$Z$ is feasible/optimal/maximum rank  optimal  in \eqref{re-d} $\Leftrightarrow$ it has the same status in \eqref{d}.
\eit 
Indeed, a psd matrix $Z$ is feasible in \eqref{re-d} iff $\la Z - C, D_j \ra = 0$ for all $j, \,$
i.e., iff $Z - C$ is a linear combination of the $A_i.$ 
This implies the statement about feasible solutions.
Let us next fix $Z$ feasible in \eqref{d} and $y \in \rad{m}$ such that $Z = C - \A^* y.$ 
Then 
\begin{equation} \label{eqn-X0y} 
	\la X_0, Z \ra = \la X_0, C - \A^* y \ra = \la X_0, C  \ra - \la \A X_0, y \ra = \la X_0, C \ra  - \la y, b \ra,
\end{equation}
so $ \la X_0, Z \ra  + \la y, b \ra$ is constant for all feasible solutions of \eqref{re-d}. 
This argument implies the statement about optimal solutions. The statement about
 maximum rank  optimal solutions is immediate. 

In summary, \eqref{re-d} restates \eqref{d} in equality constrained, i.e.. primal form.

Thus we apply Lemma \ref{lemma-Xstar} with the roles of $X^*$ and $Z^*$ exchanged,
and deduce there is a reformulation of \eqref{re-d} in which the first, say $\ell$ equations are of the form 
$$
\la Y_i, Z \ra = 0 \,\, \text{for} \,\, i=1, \dots, \ell,
$$
and \eqref{thm-main-2-b} and \eqref{thm-main-3} hold. 

We just need to show that 
\eqref{thm-main-2-a} holds as well. For that, let $T$ be the product of all rotation matrices used in the 
reformulation process, and  
\begin{equation} \label{eqn-D_j_prime_define} 
	\begin{array}{rclr} 
		D_j^\prime & = & T^\top D_j  T  & \text{for  all} \, j \\
		A_i^\prime & = & T^{-1} A_i T^{- \top}  & \text{for  all} \,  i \\
		C^\prime & = & T^{-1} C  T^{- \top}.
	\end{array}
\end{equation}
Then 
\begin{equation} \label{eqn-Aiprime-Djprime} 
	\ba{rcl} 
	\la A_i^\prime, D_j^\prime \ra & = & 0 \; \text{for all} \; i, j \\ 
	\la C^\prime, D_j^\prime \ra & = & \la C, D_j \ra \; \text{for all} \; j. 
	\ena
\end{equation}
Let us fix $s \in \{1, \dots, \ell \}.$ 
Given that the equation $\la Y_s, Z \ra = 0$ was obtained  
by rotating \eqref{re-d} by $T$ and by elementary row operations, there is  $\lambda \in \rad{t}$ such that 
$$
\begin{array}{rcl}
	Y_s & = & \sum_{i=1}^t \lambda_i D_i^\prime \\
	0   & = &  \sum_{i=1}^t \lambda_i \la D_i, C \ra. 
\end{array}
$$
Combining this with \eqref{eqn-D_j_prime_define}  and \eqref{eqn-Aiprime-Djprime} 
we see that $Y_s$ has $0$ inner product with 
all $A_i^\prime$ and with $C^\prime, \, $ i.e., \eqref{thm-main-2-a} indeed holds. 
\qed 

Lemma \ref{lemma-Zstar} actually implies that \eqref{p} needs to be only rotated, elementary row operations are not needed in order to arrive at its conclusion.


\subsection{Proof of Theorem \ref{thm-main}} 
\label{subsection-proof-of-theorem-1-proof}  
We can now prove Theorem \ref{thm-main}. For that we need some more  technical preliminaries (which are not used in the rest of the paper).

\begin{Definition} \label{definition-R-block-ij-column} 
		\begin{enumerate} 
				\item Suppose $R \subseteq [n].$ We say that an $n \times n$ matrix is an $R$-block matrix if it arises from 
		$I$ by replacing $I(R)$ by some invertible matrix. 
		\item We say that an $n \times n$ matrix is an $i \mhyphen j$ column addition matrix, 
		if it arises from 
		$I$ by replacing the $(i,j)$ element by some real number. 
	\end{enumerate}
\end{Definition}
For example, if $n=4, \, R = \{1,4 \}, \, $ then 
$$
T \, = \, 
\begin{pmatrix} 2 & 0 & 0 & 3 \\
	0 & 1  & 0 & 0 \\
	0 & 0 & 1 & 0 \\
	4 & 0 & 0 & 5 \end{pmatrix}. 
$$
is an $R$-block matrix. 

The following statements follow from Definition 
\ref{definition-R-block-ij-column} and  are listed for reference:

\begin{Proposition} The following hold:
	\begin{enumerate}
		\item Suppose $T \in \mymatrix{n \times n}$ is an $R$-block matrix. 
		Then 
		\begin{enumerate}
			\item $T^\top$ and $T^{-1}$ are also $R$-block matrices. 
			\item If $M \in \mymatrix{n \times n}, $ then right multiplying $M$ by $T$ replaces $M([n], R)$ by 
			$M([n], R) T(R).$ 
		\end{enumerate} 
		\item If $T$ is an $i \mhyphen j$ column addition matrix, then 
		\begin{enumerate}
			\item  $T^{-1}$ is also  an $i \mhyphen j$ column addition matrix.
			\item $T^{\top}$ and  $T^{- \top}$ are $j \mhyphen i$ column addition matrices.
			\item right multiplying $M\in\mathbb{R}^{n\times n}$ by $T$ adds a multiple of column $i$ of $M$ to 
			column $j.$ 
		\end{enumerate}
	\end{enumerate}
\end{Proposition}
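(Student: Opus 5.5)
The statement is the unnumbered Proposition listing properties of $R$-block matrices and $i\mhyphen j$ column addition matrices. Since all items follow directly from Definition \ref{definition-R-block-ij-column}, I would verify them by direct block computations, after permuting coordinates so that $R$ occupies the leading positions.

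For part 1, assume without loss of generality (by a simultaneous permutation of rows and columns, which preserves everything in sight) that $R=\{1,\dots,|R|\}$. Then $T=\bpx T(R) & 0 \\ 0 & I \epx$ with $T(R)$ invertible. Transposing gives $\bpx T(R)^\top & 0 \\ 0 & I\epx$, and $T^\top(R)$ is invertible. The inverse is $\bpx T(R)^{-1} & 0 \\ 0 & I\epx$, which one checks by multiplying the two block matrices. So both $T^\top$ and $T^{-1}$ are $R$-block matrices, which proves (a).

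For (b), write $M=[M([n],R),\,M([n],[n]\setminus R)]$ in the permuted coordinates. Block multiplication then gives
$$MT=[M([n],R)T(R),\; M([n],[n]\setminus R)].$$
Undoing the permutation gives the stated column replacement.

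For part 2, take $i\neq j$ (if $i=j$ the matrix is diagonal, the claims are trivial, and the last one reads as scaling). We have $T=I+\alpha e_ie_j^\top$. Because $e_j^\top e_i=0$, the product $(I+\alpha e_ie_j^\top)(I-\alpha e_ie_j^\top)$ equals $I$. So $T^{-1}=I-\alpha e_ie_j^\top$, which again has a single off-identity entry at $(i,j)$; this proves (a).

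For (b), transposing gives $T^\top = I+\alpha e_je_i^\top$, a $j\mhyphen i$ matrix. Then $T^{-\top}=(T^{-1})^\top=I-\alpha e_je_i^\top$ is also a $j\mhyphen i$ matrix.

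For (c), compute $MT=M+\alpha (Me_i)e_j^\top$. This adds $\alpha$ times column $i$ of $M$ to column $j$ and leaves the other columns unchanged.

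None of these steps is a real obstacle. The only point needing care is the degenerate case $i=j$ in part 2, where "column addition" should be read as scaling. There the matrix is invertible only when the entry is nonzero, so I would note that the intended use is $i\neq j$.
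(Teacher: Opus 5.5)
Your proposal is correct and matches the paper. The paper gives no separate argument and simply says these facts follow directly from Definition~\ref{definition-R-block-ij-column}; your block computation for $R$-block matrices and the identity $T^{-1}=I-\alpha e_ie_j^\top$ for $T=I+\alpha e_ie_j^\top$ are exactly the intended verifications. Your remark on the case $i=j$ is harmless, because the paper only uses $u\mhyphen v$ column addition matrices with $u\in R_i$, $v\in R_t$ drawn from disjoint index sets, so $u\neq v$ there.
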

Facial reduction sequences appear in the theory of facial reduction 
\cite{BorWolk:81, Pataki:13}.

\begin{Definition}
	We say that a sequence of symmetric matrices $Y_1, \dots, Y_t$ is a {\em facial reduction sequence for} $\psdn$ if 
	$$
	Y_1 \in \psdn, \quad \text{and} \quad Y_{i+1} \in \bigl(   \psdn \cap Y_1^\perp \cap \dots \cap Y_i^\perp \bigr)^* \quad \text{for} ~~ i=1, \dots, t-1.
	$$ 
	Here, for a set $C \subseteq \symn$  we write  $C^* \, = \, \{ Y \, : \, \la X, Y \ra \geq 0 ~ \text{for  all} ~  X \in C \,   \}$ for its dual cone. 
\end{Definition}
From this definition it follows that 
if $(Y_1, \dots, Y_t)$ is a regular facial reduction sequence, then 
it is a facial reduction sequence. The converse is not true in general. However, we 
have the following proposition, which is excerpted from from Lemma 1 in \cite{liu2017exact}.

\begin{Proposition} \label{prop-fr} Suppose  $(Y_1, \dots, Y_t)$ is a facial reduction sequence. Then 
	\begin{enumerate}
		\item \label{prop-fr-1} If 	$T$ is an invertible matrix, then 
		$(T^{\top} Y_1 T, \dots, T^{\top} Y_{t} T)$ is also a  facial reduction sequence. 
		\item \label{prop-fr-2} There is a $T$ is an invertible matrix, such that 
		$(T^{\top} Y_1 T, \dots, T^{\top} Y_{t} T)$ is a regular facial reduction sequence. 
	\end{enumerate}
	\qed 
\end{Proposition}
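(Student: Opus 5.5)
We prove the two items separately. Item (\ref{prop-fr-1}) is an invariance check under congruence. Item (\ref{prop-fr-2}) is an induction on $t$ that builds $T$ as a product of block-diagonal rotations, each acting only on the indices not yet "reduced".

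For (\ref{prop-fr-1}), fix an invertible $T$ and write $\phi(X) := T^{-1} X T^{-\top}$. Then $\phi$ is a linear bijection of $\symn$ that maps $\psdn$ onto itself. It also satisfies
$$\la T^\top Y T, \phi(X) \ra = \la Y, X \ra \quad \text{for all } X, Y \in \symn.$$
Hence $\psdn \cap (T^\top Y_1 T)^\perp \cap \dots \cap (T^\top Y_i T)^\perp = \phi\bigl(\psdn \cap Y_1^\perp \cap \dots \cap Y_i^\perp\bigr)$. By the same identity, $Y$ lies in the dual cone of a set $F$ if and only if $T^\top Y T$ lies in the dual cone of $\phi(F)$. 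Also $T^\top Y_1 T \succeq 0$. Applying these facts for each $i$ shows that the rotated sequence is a facial reduction sequence.

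For (\ref{prop-fr-2}) we use induction, maintaining the following invariant after processing $Y_1, \dots, Y_i$. There is an invertible $T_i$, and after a simultaneous permutation we may assume the rotated matrices $T_i^\top Y_j T_i$ for $j \le i$ have the form (\ref{eqn-regfr}) with blocks $r_1, \dots, r_i$. Consequently
$$F_i := \psdn \cap (T_i^\top Y_1 T_i)^\perp \cap \dots \cap (T_i^\top Y_i T_i)^\perp$$
is the set of psd matrices supported on the trailing $(n - r_{1:i}) \times (n - r_{1:i})$ block. This follows from the argument given after Definition \ref{definition-regfr}, together with the observation that every psd matrix so supported is indeed orthogonal to each $Y_j$, because the trailing block of each $Y_j$ is zero.

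The base case $i=1$ works as follows. Since $Y_1 \succeq 0$, we diagonalize it by an orthogonal matrix and order the eigenvalues so that the positive ones come first. This gives $T_1^\top Y_1 T_1 = \Diag(\Lambda_1, 0)$.

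For the step from $i$ to $i+1$, the key computation is the dual cone of $F_i$. It equals the set of $Y \in \symn$ whose trailing block $Y(\{r_{1:i}+1, \dots, n\})$ is psd, with all other entries arbitrary. By (\ref{prop-fr-1}), the rotated sequence is still a facial reduction sequence, so $W := T_i^\top Y_{i+1} T_i$ has a psd trailing block $W_{22}$. Choose $U$ invertible with $U^\top W_{22} U = \Diag(\Lambda_{i+1}, 0)$, where $\Lambda_{i+1}$ is positive definite of order $r_{i+1} \ge 0$. Set
$$T_{i+1} := T_i \bpx I_{r_{1:i}} & 0 \\ 0 & U \epx .$$
This rotation acts only on the trailing indices. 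Every earlier $T_i^\top Y_j T_i$ with $j \le i$ has zero rows and columns beyond its own $\Lambda_j$-block, and on the trailing indices it is nonzero only in its off-diagonal "$\times$" part. The rotation therefore only recombines those "$\times$" entries, and each earlier matrix keeps the shape (\ref{eqn-regfr}). The new matrix $T_{i+1}^\top Y_{i+1} T_{i+1}$ has trailing block $\Diag(\Lambda_{i+1}, 0)$ and arbitrary entries elsewhere, which is exactly the required shape for index $i+1$. Taking $T := T_t$ finishes the proof.

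The only step needing real care is the description of $(F_i)^*$. One must check that $\la X, Y \ra \ge 0$ for all psd $X$ supported on the trailing block holds if and only if the trailing block of $Y$ is psd. Both directions follow from self-duality of $\psd{n - r_{1:i}}$.
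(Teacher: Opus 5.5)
Your proof is correct. Item 1 is the congruence invariance under $X \mapsto T^{-1}XT^{-\top}$. For item 2, the induction diagonalizes the psd trailing block of $T_i^\top Y_{i+1} T_i$ by a block-diagonal rotation with blocks $I_{r_{1:i}}$ and $U$; this works because the trailing block of every earlier rotated matrix is zero, so only its $\times$ part gets recombined.

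The paper gives no proof of its own and imports the statement from Lemma 1 of Liu and Pataki. Your argument is the standard self-contained proof of that lemma.
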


\nin{Proof of Theorem \ref{thm-main}:}
Let $(P^\prime)$ be the reformulation produced from $(P)$ in Lemma \ref{lemma-Xstar}.
Then $(P^\prime)$ satisfies items \eqref{thm-main-1} and \eqref{thm-main-3} in Theorem \ref{thm-main}.
Also, by some mild abuse of notation, we let 
$(P^{\prime \prime})$ be the reformulation produced from $(P)$ in Lemma \ref{lemma-Zstar}, and 
$Y_1, \dots, Y_\ell$ the $Y_j$ matrices obtained therein. Then 
$(P^{\prime \prime})$ with $Y_1, \dots, Y_\ell$ satisfy items \eqref{thm-main-2} and \eqref{thm-main-3} in 
Theorem \ref{thm-main}.  

For convenience, define $A_0^\prime := Z^*, \, Y_0 := X^*.$ 
We see that 
\co{\begin{equation} \label{eqn-Z*-in-psdn-cap-Y0}
	Z^* \; \text{is maximum rank  in} \; \psd{n} \cap (X^*)^\perp \cap Y_1^\perp \cap \dots \cap Y_\ell^\perp.
\end{equation} }
\begin{equation} \label{eqn-Z*-in-psdn-cap-Y0}
	Z^* \; \text{is maximum rank  in} \; \psd{n}  \cap Y_0^\perp \cap \dots \cap Y_\ell^\perp.
\end{equation}
We next reformulate $(P^{\prime \prime})$ into $(P^\prime).$ In this process we rotate 
$(P^{\prime \prime})$ by $T, \,$ where $T$ is admissible; this can be done, since both 
$(P^{\prime })$  and $(P^{\prime \prime})$ were obtained from \eqref{p} using only admissible 
rotations. 
We also rotate all $Y_j$ by $T^{- \top}$ to make sure that 
\eqref{thm-main-2-a} in Theorem \ref{thm-main} remains true.  
Afterwards 
$(P^\prime)$ still satisfies items \eqref{thm-main-1-a}, 
\eqref{thm-main-1-b}, \eqref{thm-main-2-a} and \eqref{thm-main-3} 
Since these rotations do not change $X^* = Y_0$ and $Z^* = A_0^\prime, $ so \eqref{eqn-Z*-in-psdn-cap-Y0} remains true.
However, 
instead of \eqref{thm-main-2-b}, by part \eqref{prop-fr-1} of 
Proposition \ref{prop-fr}  we have the  weaker statement that 
$(X^*, Y_1, \dots, Y_\ell)$ is a facial reduction sequence.

The goal is to ensure that $(P^{\prime })$ satisfies all conditions of Theorem \ref{thm-main}. 
Let $j \in \{0, \dots, \ell \}, \,$ 
and consider the following 
condition:
\begin{equation} \label{eqn-invariant-j} 
	(Y_0, \dots, Y_{j}) \text{ is  a regular facial reduction sequence.}
\end{equation}
This statement holds when $j=0.$ We will rotate $Y_0, \dots, Y_\ell$ by suitable 
admissible rotations.
Further, anytime we rotate all $Y_j$ by an admissible $T, \,$ we also rotate 
all $A_0^\prime, \dots, A_k^\prime$ by $T^{- \top}$ to make sure that 
\eqref{thm-main-2-a} in Theorem \ref{thm-main} remains true. Overall, these rotations
will maintain \eqref{thm-main-1-a}, \eqref{thm-main-2-a}, \eqref{thm-main-3}, and 
\eqref{eqn-Z*-in-psdn-cap-Y0}. 

The goal is to make sure that 
\begin{align} 
	\label{to-ensure-1} 
	\text{After the rotations}  \, \eqref{eqn-invariant-j} & \, \text{holds with } \, j+1 \, \text{in place of } j \\
	\label{to-ensure-2}  
	\text{After the rotations} \,  \eqref{thm-main-1-b} & \, \text{remains true}.
\end{align} 
Since \eqref{eqn-Z*-in-psdn-cap-Y0} holds throughout, 
when \eqref{eqn-invariant-j} holds with $j = \ell$ then 
\eqref{thm-main-2-b} will hold, and we will be done.

So our task is to perform admissible rotations to ensure \eqref{to-ensure-1} and \eqref{to-ensure-2}. 
To better explain our reasoning, we argue 
why we don't just rotate all $Y_j$ by the $T$ furnished by 
part \ref{prop-fr-2} of Proposition \ref{prop-fr}? The reason is that if we 
arbitrarily chose such a $T, \,$ and rotated all $A_i^\prime$ by $T^{- \top}, \, $ 
then \eqref{thm-main-1-b} may not hold afterwards. So we need to choose our rotations with more care.

Let $(P_0, \dots, P_k)$ be the structure of $(A_0^\prime, \dots, A_k^\prime).$ For convenience, define 
$$
P_{k+1} := [n] \setminus P_{0:k}.
$$
Also let $(Q_0, \dots, Q_j)$ be the structure of $(Y_0, \dots, Y_j).$ 

We see that 
$$Y_{j+1} \in \left( \psd{n} \cap Y_0^\perp \cap \dots \cap Y_j^\perp \right)^*  \, \Rightarrow \, Y_{j+1}([n] \setminus Q_{0:j}) \succeq 0,   $$ 
and the other elements of $Y_{j+1}$ are arbitrary. 

Since $Z^*$ has zero inner product with all $Y_j$ (by \eqref{eqn-Z*-in-psdn-cap-Y0})
we have  
\begin{equation} \label{eqn-P0-cap-Q0j} 
	P_0 \cap  Q_{0:j} = \emptyset.  
\end{equation}
Set 
$$
R := [n] \setminus Q_{0:j} \, \text{and} \, R_i := R \cap P_i \,\, \text{for} \,\, i=0, \dots, k+1.
$$
The rotation we produce will make $Y_{j+1}(R)$ diagonal.
Since by \eqref{eqn-P0-cap-Q0j}  we have 
$R_0 = P_0, $ and  $\la Y_{j+1}, Z^* \ra = 0, \,$ 
we deduce $Y_{j+1}(R_0, R)=0.$ So $Y_{j+1}$ looks like on the first figure on Figure 
\ref{fig:Yj+1}.

We next perform the following transformations for $i=1,2, \dots, k+1:$ 

\begin{enumerate}[label=(Tr-\arabic*), ref=(Tr-\arabic*),
	leftmargin=4em,    
	labelsep=0.8em,    
	align=right]       
	\item \label{tr-1} Choose a $T$ $R_i$-block matrix such that i) the $R_i$ block of this matrix is orthonormal, and ii) the $(R_i, R_i)$ block of $T^\top Y_{j+1}T$ is diagonal. Rotate all $Y_0, \dots, Y_{\ell}$ by $T$ and all $A_i^\prime$ and $C^\prime$ by $T^{- \top}.$ 
	
	\item \label{tr-2} For $t=i+1, \dots, k$ add columns of $Y_{j+1}([n], R_i)$ to columns of $Y_{j+1}([n], R_t)$ to  ensure 
	$$
	Y_{j+1}(R_i, R_t) = 0. 
	$$
	Perform the analogous operations on the rows. 
	
	Supposing these column and row operations are represented by rotating by 
	a matrix $T, $ then also rotate $Y_0, \dots, Y_j, Y_{j+2}, \dots Y_\ell$ by the same $T.$  
	Also rotate all $A_i^\prime$ and $C^\prime$ by $T^{- \top}.$ 
\end{enumerate}

We illustrate this process on Figure \ref{fig:Yj+1} when $k=1.$ Here the $\oplus$ symbols  correspond to a positive semidefinite block,
the $+$ symbol to a positive definite block, and the $\times$ symbol to a block with arbitrary elements. The first arrow corresponds
to transformation \ref{tr-1} with $i=1.$ The second arrow corresponds to transformation 
\ref{tr-2}.

	\begin{figure}[h]
	\centering
	
	\begin{tikzpicture}[
		scale=0.580,
		transform shape,
		every node/.style={font=\normalsize},
		grid/.style={line width=0.6pt},
		mymark/.style={font=\Large},
		mybrace/.style={
			decorate,
			decoration={brace,amplitude=5pt}
		},
		myarrow/.style={
			-{Latex[length=1.8mm,width=1.2mm]},
			line width=0.35pt
		}
		]
		
		\node at (-3.5,2.5) {$Y_{j+1}=$};
		
		
		\begin{scope}[shift={(0,0)}]
			
			\draw[grid] (0,0) rectangle (5,5);
			
			\draw[grid] (1.2,0) -- (1.2,5);
			\draw[grid] (2.6,0) -- (2.6,5);
			\draw[grid] (3.6,2.4) -- (3.6,5);
			
			\draw[grid] (0,3.9) -- (5,3.9);
			\draw[grid] (0,2.4) -- (5,2.4);
			\draw[grid] (0,1.2) -- (2.6,1.2);
			
			\node[mymark] at (0.6,4.45) {$\times$};
			\node[mymark] at (1.9,4.45) {$\times$};
			\node[mymark] at (3.1,4.45) {$\times$};
			\node[mymark] at (4.3,4.45) {$\times$};
			
			\node[mymark] at (0.6,3.15) {$\times$};
			\node[mymark] at (0.6,1.8) {$\times$};
			\node[mymark] at (0.6,0.6) {$\times$};
			
			\node[mymark] at (1.9,3.15) {$0$};
			\node[mymark] at (3.1,3.15) {$0$};
			\node[mymark] at (4.3,3.15) {$0$};
			
			\node[mymark] at (1.9,1.8) {$0$};
			\node[mymark] at (1.9,0.6) {$0$};
			
			\node[
			draw,
			circle,
			inner sep=2pt,
			line width=0.6pt,
			font=\Large
			] at (3.6,1.2) {$+$};
			
			\draw[mybrace] (0,5.25) -- (1.2,5.25)
			node[midway,above=11pt] {$Q_{0:j}$};
			
			\draw[mybrace] (1.2,5.25) -- (2.6,5.25)
			node[midway,above=11pt] {$R_0$};
			
			\draw[mybrace] (2.6,5.25) -- (3.6,5.25)
			node[midway,above=11pt] {$R_1$};
			
			\draw[mybrace] (3.6,5.25) -- (5,5.25)
			node[midway,above=11pt] {$R_2$};
			
			\draw[
			decorate,
			decoration={brace,mirror,amplitude=5pt}
			]
			(-0.2,5) -- (-0.2,3.9)
			node[midway,left=11pt] {$Q_{0:j}$};
			
			\draw[
			decorate,
			decoration={brace,mirror,amplitude=5pt}
			]
			(-0.2,3.9) -- (-0.2,2.4)
			node[midway,left=11pt] {$R_0$};
			
			\draw[
			decorate,
			decoration={brace,mirror,amplitude=5pt}
			]
			(-0.2,2.4) -- (-0.2,1.2)
			node[midway,left=11pt] {$R_1$};
			
			\draw[
			decorate,
			decoration={brace,mirror,amplitude=5pt}
			]
			(-0.2,1.2) -- (-0.2,0)
			node[midway,left=11pt] {$R_2$};
			
		\end{scope}
		
		\draw[myarrow] (6.0,2.5) -- (7.3,2.5);
		
		
		\begin{scope}[shift={(8.0,0)}]
			
			\draw[grid] (0,0) rectangle (5,5);
			
			\draw[grid] (1.2,0) -- (1.2,5);
			\draw[grid] (2.6,0) -- (2.6,5);
			\draw[grid] (3.6,0) -- (3.6,5);
			
			\draw[grid] (0,3.9) -- (5,3.9);
			\draw[grid] (0,2.4) -- (5,2.4);
			\draw[grid] (0,1.2) -- (5,1.2);
			
			\draw[grid] (3.1,1.2) -- (3.1,2.4);
			\draw[grid] (2.6,1.8) -- (3.6,1.8);
			
			\node[mymark] at (0.6,4.45) {$\times$};
			\node[mymark] at (1.9,4.45) {$\times$};
			\node[mymark] at (3.1,4.45) {$\times$};
			\node[mymark] at (4.3,4.45) {$\times$};
			
			\node[mymark] at (0.6,3.15) {$\times$};
			\node[mymark] at (0.6,1.8) {$\times$};
			\node[mymark] at (0.6,0.6) {$\times$};
			
			\node[mymark] at (1.9,3.15) {$0$};
			\node[mymark] at (3.1,3.15) {$0$};
			\node[mymark] at (4.3,3.15) {$0$};
			
			\node[mymark] at (1.9,1.8) {$0$};
			\node[mymark] at (1.9,0.6) {$0$};
			
			\node[mymark] at (2.85,2.05) {$+$};
			\node[mymark] at (3.35,2.05) {$0$};
			
			\node[mymark] at (2.85,1.55) {$0$};
			\node[mymark] at (3.35,1.55) {$0$};
			
			\node[mymark] at (4.3,2.05) {$\times$};
			\node[mymark] at (4.3,1.55) {$0$};
			
			\node[mymark] at (3.1,0.6) {$\times$};
			\node[mymark] at (4.3,0.6) {$\oplus$};
			
		\end{scope}
		
		\draw[myarrow] (14.0,2.5) -- (15.3,2.5);
		
		
		\begin{scope}[shift={(16.0,0)}]
			
			\draw[grid] (0,0) rectangle (5,5);
			
			\draw[grid] (1.2,0) -- (1.2,5);
			\draw[grid] (2.6,0) -- (2.6,5);
			\draw[grid] (3.6,0) -- (3.6,5);
			
			\draw[grid] (0,3.9) -- (5,3.9);
			\draw[grid] (0,2.4) -- (5,2.4);
			\draw[grid] (0,1.2) -- (5,1.2);
			
			\draw[grid] (3.1,1.2) -- (3.1,2.4);
			\draw[grid] (2.6,1.8) -- (3.6,1.8);
			\draw[grid] (3.6,1.8) -- (5,1.8);
			\draw[grid] (3.1,0) -- (3.1,1.2);
			
			\node[mymark] at (0.6,4.45) {$\times$};
			\node[mymark] at (1.9,4.45) {$\times$};
			\node[mymark] at (3.1,4.45) {$\times$};
			\node[mymark] at (4.3,4.45) {$\times$};
			
			\node[mymark] at (0.6,3.15) {$\times$};
			\node[mymark] at (0.6,1.8) {$\times$};
			\node[mymark] at (0.6,0.6) {$\times$};
			
			\node[mymark] at (1.9,3.15) {$0$};
			\node[mymark] at (3.1,3.15) {$0$};
			\node[mymark] at (4.3,3.15) {$0$};
			
			\node[mymark] at (1.9,1.8) {$0$};
			\node[mymark] at (1.9,0.6) {$0$};
			
			\node[mymark] at (2.85,2.05) {$+$};
			\node[mymark] at (3.35,2.05) {$0$};
			
			\node[mymark] at (2.85,1.55) {$0$};
			\node[mymark] at (3.35,1.55) {$0$};
			
			\node[mymark] at (4.3,2.05) {$0$};
			\node[mymark] at (4.3,1.55) {$0$};
			
			\node[mymark] at (2.85,0.6) {$0$};
			\node[mymark] at (3.35,0.6) {$0$};
			
			\node[mymark] at (4.3,0.6) {$\oplus$};
			
		\end{scope}
		
	\end{tikzpicture}
	
	\caption{Transforming \(Y_{j+1}\).}
	\label{fig:Yj+1}
\end{figure}

When we are done, we set 
\begin{equation} \label{eqn-defineQj+1} 
	Q_{j+1} := \{ v ~|~ \text{\rm the }(v,v)\text{ \rm element of }Y_{j+1}(R) \text{ \rm is positive} \}.  
\end{equation}
We next claim that all rotations given above are admissible.  
For that, we first consider  a rotation by a matrix $T$ in \ref{tr-1}. Recall that the $R_i$ diagonal block of $T$ is invertible, 
where $R_i \subseteq P_i.$ We have that $R_i$ is disjoint from $Q_0$ (since $R_i \subseteq R$) and from $P_0$ (since $R_i \subseteq P_i$ and $i > 0$). Thus $T$ is indeed 
admissible.

Next we consider a rotation by a matrix $T$ in \ref{tr-2}. 
 We assume without loss of generality that 
$T$ is a $u \mhyphen v$ column addition matrix, where $u \in R_i, \, v \in R_t, \, $ and $0 < i < t.$ (We do not lose generality, since $T$ is the product of such matrices.)
Since $T$ arises from the identity by replacing an element in the $(R_i, R_t)$ block, and 
$R_i$ and $R_t$ are disjoint from $P_0 \cup Q_0, \,$ we see that $T$ is an admissible matrix.

Next we prove \eqref{to-ensure-1}. For that, by the definition of $Q_{j+1}$ it suffices to show that 
after the operations $(Y_0, \dots , Y_j)$ \text{is still a regular facial reduction sequence.}
\co{\begin{equation} \label{eqn-Y0-Yj-stays-regfr} 
		\text{After these operations,} \, (Y_0, \dots , Y_j) \, \text{is still a regular facial reduction sequence.} 
\end{equation}}
To prove this, first let $T$ be a rotation defined in \ref{tr-1}. We claim that rotating  
$(Y_0, \dots , Y_j)$ by $T$ keeps it a regular facial reduction sequence.  
For that,  we assume without loss of generality that rows and 
columns in $Y_0, \dots, Y_j$ were permuted so the indices in $Q_0, \dots, Q_j$ come 
first. Let   $s \in \{0, \dots, j \}.$  Since 
$R_i \subseteq [n] \setminus Q_{0:j}$ the changed portion of $Y_s$ 
is shown on Figure \ref{fig:Ys-rotate} as a striped block, so our claim follows.

Next let $T$ be a rotation defined in \ref{tr-2}. We claim  that rotating by $T, \,$ where $T$ is defined in \ref{tr-2}, keeps 
$(Y_0, \dots , Y_j)$ a regular facial reduction sequence. Again, 
it suffices to prove this when $T$ is a $u \mhyphen v$ column addition matrix,
where $u \in R_i, \, v \in R_t, \,$ and $ i < t.$ Again, fix   $s \in \{0, \dots, j \}.$  
Since $R_i, R_t \subseteq [n] \setminus Q_{0:j}, \, $ the changed portion of $Y_s$ is shown on Figure \ref{fig:Ys-rotate}, with $t$ in place of $i.$ 
So this claim follows as well, hence we are done with the proof of \eqref{to-ensure-1}.
\begin{figure}[h]
	\centering
	\[
\begin{tikzpicture}[
    x=0.75cm,
    y=0.75cm,
    baseline=(current bounding box.center),
    every node/.style={font=\normalsize}
]


\node at (-0.85,2.5) {$Y_s=$};

\fill[pattern=north east lines] (0,1) rectangle (1,2);
\fill[pattern=north east lines] (3,4) rectangle (4,5);

\foreach \x in {0,1,2,3,4,5}
{
    \draw (\x,0) -- (\x,5);
}

\foreach \y in {0,1,2,3,4,5}
{
    \draw (0,\y) -- (5,\y);
}

\node at (0.5,4.5) {$\times$};
\node at (1.5,4.5) {$\times$};
\node at (2.5,4.5) {$\times$};
\node at (3.5,4.5) {$\times$};
\node at (4.5,4.5) {$\times$};

\node at (0.5,3.5) {$\times$};
\node at (1.5,3.5) {$+$};

\node at (0.5,2.5) {$\times$};
\node at (0.5,1.5) {$\times$};
\node at (0.5,0.5) {$\times$};

\draw[
    decorate,
    decoration={brace,amplitude=4pt,raise=3pt}
]
(0,5.12) -- (1,5.12)
node[midway,yshift=17pt] {$Q_{0:s}$};

\draw[
    decorate,
    decoration={brace,amplitude=4pt,raise=3pt}
]
(3,5.12) -- (4,5.12)
node[midway,yshift=17pt] {$R_i$};

\end{tikzpicture} 
	\]
	\caption{The change in $Y_s$} 
		\label{fig:Ys-rotate}
\end{figure}
We next prove \eqref{to-ensure-2}. For that we first consider  a rotation by a matrix $T$ in \ref{tr-1}. Recall that we rotate by $T^{- \top}$ all 
$A_i^\prime$ and $C^\prime.$ Also recall that $R_i \subseteq P_i$ and 
the $R_i$ diagonal block of $T$ is orthonormal. Thus the $R_i$ diagonal block of $T^{- \top}$ is also orthonormal.
Since $R_i \subseteq P_i, \,$ and the structure of $A_0^\prime, \dots, A_k^\prime$ is $P_0, \dots,  P_k, \,$ our claim follows.

Next we consider a rotation by a matrix $T$ in \ref{tr-2}. Again we assume without loss of generality that 
$T$ is a $u \mhyphen v$ column addition matrix, where $u \in R_i, \, v \in R_t, \, $ and $i < t.$ 
Thus $T^{- \top}$ is a $v \mhyphen u$ column addition matrix. Since we rotate 
$A_0^\prime, \dots, A_k^\prime$ by $T^{- \top}$ the change in $A_s^\prime, \, $ where $s \in \{0, \dots, k \}$ is 
shown on Figure 
\ref{fig-A-s-change-rotation}.    Here we only show the change that takes place when we replace $A_s^\prime$ by $A_s^\prime T^{- \top}.$ 
We distinguish the cases $s=i, \, s < i, \,$ and $s > i.$

   \begin{figure}[h]
   	\centering
   	
   	\begin{tikzpicture}[
   		scale=0.50,
   		transform shape,
   		every node/.style={font=\normalsize},
   		grid/.style={line width=0.5pt},
   		mymark/.style={font=\Large},
   		myarrow/.style={-{Latex[length=2.5mm]}, line width=0.7pt}
   		]
   		
   		\begin{scope}[shift={(0,0)}]
   			
   			\node at (-1.5,2.5) {$A_s^\prime=$};
   			
   			\draw[grid] (0,0) rectangle (5,5);
   			\foreach \x in {1,2,3,4} \draw[grid] (\x,0) -- (\x,5);
   			\foreach \y in {1,2,3,4} \draw[grid] (0,\y) -- (5,\y);
   			
   			\foreach \x in {0.5,1.5,2.5,3.5,4.5}
   			\node[mymark] at (\x,4.5) {$\times$};
   			\foreach \y in {0.5,1.5,2.5,3.5}
   			\node[mymark] at (0.5,\y) {$\times$};
   			
   			\node[mymark] at (1.5,3.5) {$+$};
   			
   			\draw[decorate,decoration={brace,amplitude=4pt}]
   			(1,5.25) -- (2,5.25) node[midway,above=5pt] {$P_i$};
   			
   			\draw[decorate,decoration={brace,amplitude=4pt}]
   			(3,5.25) -- (4,5.25) node[midway,above=5pt] {$P_t$};
   			
   			\draw[myarrow] (3.3,5.8) to[out=120,in=60] (1.7,5.8);
   			
   			\node at (2.5,-0.9) {$s<i$};
   			
   		\end{scope}
   		
   		\begin{scope}[shift={(8,0)}]  
   			
   			\node at (-1.5,2.5) {$A_s^\prime=$};
   			
   			\draw[grid] (0,0) rectangle (5,5);
   			\foreach \x in {1,2,3,4} \draw[grid] (\x,0) -- (\x,5);
   			\foreach \y in {1,2,3,4} \draw[grid] (0,\y) -- (5,\y);
   			
   			\foreach \x in {0.5,1.5,2.5,3.5,4.5}
   			\node[mymark] at (\x,4.5) {$\times$};
   			\foreach \y in {0.5,1.5,2.5,3.5}
   			\node[mymark] at (0.5,\y) {$\times$};
   			
   			\node[mymark] at (1.5,3.5) {$+$};
   			
   			\draw[decorate,decoration={brace,amplitude=4pt}]
   			(1,5.25) -- (2,5.25) node[midway,above=5pt] {$P_s$};
   			
   			\draw[decorate,decoration={brace,amplitude=4pt}]
   			(3,5.25) -- (4,5.25) node[midway,above=5pt] {$P_i$};
   			
   			\draw[decorate,decoration={brace,amplitude=4pt}]
   			(4,5.25) -- (5,5.25) node[midway,above=5pt] {$P_t$};
   			
   			\draw[myarrow] (4.65,5.8) to[out=120,in=60] (3.35,5.8);
   			
   			\node at (2.5,-0.9) {$s=i$};
   			
   		\end{scope}
   		
   		\begin{scope}[shift={(16,0)}]
   			
   			\node at (-1.5,2.5) {$A_s^\prime=$};
   			
   			\draw[grid] (0,0) rectangle (5,5);
   			\foreach \x in {1,2,3,4} \draw[grid] (\x,0) -- (\x,5);
   			\foreach \y in {1,2,3,4} \draw[grid] (0,\y) -- (5,\y);
   			
   			\foreach \x in {0.5,1.5,2.5,3.5,4.5}
   			\node[mymark] at (\x,4.5) {$\times$};
   			
   			\foreach \x in {0.5,1.5,2.5,3.5,4.5}
   			\node[mymark] at (\x,3.5) {$\times$};
   			
   			\foreach \y in {0.5,1.5,2.5}
   			{
   				\node[mymark] at (0.5,\y) {$\times$};
   				\node[mymark] at (1.5,\y) {$\times$};
   			}
   			
   			\node[mymark] at (2.5,2.5) {$+$};
   			
   			\draw[decorate,decoration={brace,amplitude=4pt}]
   			(1,5.25) -- (2,5.25) node[midway,above=5pt] {$P_i$};
   			
   			\draw[decorate,decoration={brace,amplitude=4pt}]
   			(2,5.25) -- (3,5.25) node[midway,above=5pt] {$P_s$};
   			
   			\draw[decorate,decoration={brace,amplitude=4pt}]
   			(4,5.25) -- (5,5.25) node[midway,above=5pt] {$P_t$};
   			
   			\draw[myarrow] (4.65,5.8) to[out=120,in=60] (1.35,5.8);
   			
   			\node at (2.5,-0.9) {$s>i$};
   			
   		\end{scope}
   		
   	\end{tikzpicture}
   	
   	\caption{The change in $A_s^\prime$ when we replace it by $A_s^\prime T^{- \top}.$ }
   	\label{fig-A-s-change-rotation}

   \end{figure}

\qed

\co{
For your paper, I would think in terms of coverage, not count. You probably need to cite the key clusters:

strict complementarity and IPMs;
facial reduction / singularity degree;
error bounds and Hoffman-type bounds;
generators or examples of pathological/non-strictly complementary SDPs;
numerical SDP solvers and DIMACS-type residuals;
your own related structural/reformulation papers.

If that naturally gives 50?80 references, fine. If it gives 40 very relevant references, also fine. But if FoCM papers in your area often have long introductions, then yes, it is probably wise to make the related-work section more complete than for a shorter optimization note.

A possible mindset:

Do not pad the bibliography, but do not look under-informed.
?

So I would not add random marginal papers. I would add papers only where they help support a sentence like ?this issue appears in IPM convergence,? ?this relates to singularity degree,? ?this is an error-bound phenomenon,? or ?this has been observed computationally.?

}

\section{Conclusion}

Our contributions are of interest from two viewpoints. From a purely mathematical
perspective, the normal form in Theorem \ref{thm-main} parametrizes all SDPs that
lack strict complementarity. Such normal forms have a long history in linear
algebra; perhaps the best-known example is the Jordan normal form. Our normal form also provides the  simple-to-implement  Algorithms
\ref{algo-nonstrict-SDP} and \ref{algo-nonstrict-Slater-SDP}, which capture all
SDPs that fail strict complementarity. 

A key ingredient in our  algorithms is
Theorem \ref{thm-singularity-degree}, which precisely characterizes the singularity degree of SDPs in a special case. 
Given the great  interest
in the singularity degree, and the relative scarcity of results
about it, we believe that this theorem is of independent interest.

From the computational viewpoint, the SDPs generated by our algorithms have a
striking feature. Apart from the failure of strict complementarity, their
numerical parameters are quite benign: the condition numbers are small and the
coefficient spread is moderate. Thus, in these examples, the numerical difficulty
can be pinned on the failure of strict complementarity. Indeed, SDPT3 solved most of these
instances with small DIMACS errors; however, the corresponding forward errors
were up to seven orders of magnitude larger.

\co{
From the computational viewpoint, the SDPs generated by our algorithms have a striking feature.
Apart from the failure of strict complementarity, their numerical parameters are
quite benign: the condition numbers are small and the coefficient spread is
moderate. Indeed, SDPT3 solved most of these instances  with small
DIMACS errors. However, the corresponding forward errors were up to seven
orders of magnitude larger. Thus, in these examples, the numerical difficulty can be pinned on 
the failure of strict complementarity.
}
\co{ rather than to poor conditioning
or large coefficient spread.
}

One might object that strict complementarity holds generically for random SDPs;
see \cite{alizadeh1997complementarity}. However, SDPs arising from applications
are typically not random, and the analogy with Slater's condition is instructive.
Although random feasible SDPs satisfy Slater's condition generically
\cite{dur2017genericity}, SDPs arising in practice may fail it. Our results
suggest that, in applications, it is not enough to verify Slater's condition
alone; whenever possible, one should also verify mathematically whether strict
complementarity holds.

\co{
One might object that strict complementarity holds generically for random SDPs;
see \cite{alizadeh1997complementarity}. However, SDPs arising from applications
are typically not random, hence the situation is analogous to the situation with Slater's condition.
Although random feasible SDPs satisfy Slater's condition generically
\cite{dur2017genericity}, SDPs arising in practice may not. Our results
suggest that, in applications, it is not enough to verify Slater's condition
alone; whenever possible, one should also verify mathematically whether strict
complementarity holds.
}

\paragraph{Acknowledgements} I am very grateful to Erling Andersen, Ed Klotz, and especially to Michael Overton for  answering my questions on 
numerical stability and the relevance of spread.

\bibliographystyle{plain}

\bibliography{mysdp}
\end{document}